\def\ps@pprintTitle{
  \let\@oddhead\@empty
  \let\@evenhead\@empty
  \let\@oddfoot\@empty
  \let\@evenfoot\@oddfoot
}
\numberwithin{equation}{section}
\newtheorem{theorem}{Theorem}[section]
\newtheorem{proposition}[theorem]{Proposition}
\newtheorem{corollary}[theorem]{Corollary}
\newtheorem{lemma}[theorem]{Lemma}
\newtheorem{remark}[theorem]{Remark}
\newtheorem{definition}[theorem]{Definition}
\DeclareMathOperator{\col}{col}
\DeclareMathOperator{\card}{card}
\DeclareMathOperator{\diag}{diag}
\DeclareMathOperator{\dom}{dom}
\DeclareMathOperator{\nlim}{nlim}
\let\Im\relax
\DeclareMathOperator{\Im}{Im}
\let\Re\relax
\DeclareMathOperator{\Re}{Re}
\renewcommand{\le}{\leqslant}
\renewcommand{\ge}{\geqslant}
\newcommand{\len}{1}
\newcommand{\ol}{\overline}
\newcommand{\wt}{\widetilde}
\newcommand{\wh}{\widehat}
\renewcommand{\(}{\left(}
\renewcommand{\)}{\right)}
\renewcommand{\[}{\left[}
\renewcommand{\]}{\right]}
\newcommand{\scal}[1]{\left\langle{#1}\right\rangle}
\newcommand{\eps}{\varepsilon}
\newcommand{\alp}{\alpha}
\renewcommand{\l}{\lambda}
\renewcommand{\L}{\Lambda}
\renewcommand{\phi}{\varphi}
\def\cB{\mathcal{B}}
\def\cI{\mathcal{I}}
\def\cJ{\mathcal{J}}
\def\cK{\mathcal{K}}
\def\cL{\mathcal{L}}
\def\cM{\mathcal{M}}
\def\cR{\mathcal{R}}
\def\cV{\mathcal{V}}
\def\fF{\mathfrak{F}}
\def\fG{\mathfrak{G}}
\def\fH{\mathfrak{H}}
\def\bC{\mathbb{C}}
\def\bD{\mathbb{D}}
\def\bN{\mathbb{N}}
\def\bQ{\mathbb{Q}}
\def\bR{\mathbb{R}}
\def\bT{\mathbb{T}}
\def\bZ{\mathbb{Z}}
\newcommand{\VectorSpace}[2]{{#1}({#2}; \bC^2)}
\newcommand{\MatrixSpace}[2]{{#1}({#2}; \bC^{2 \times 2})}
\newcommand{\LL}[1]{\MatrixSpace{L^{#1}}{[0,1]}}
\newcommand{\LLV}[1]{\VectorSpace{L^{#1}}{[0,1]}}
\newcommand{\bigabs}[1]{\bigl|{#1}\bigr|}
\newcommand{\abs}[1]{\left|{#1}\right|}
\newcommand{\floor}[1]{\left\lfloor{#1}\right\rfloor}
\newcommand{\curl}[1]{\left\{{#1}\right\}}
\newcommand{\fr}[1]{\left\langle{#1}\right\rangle}
\begin{document}

\sloppy

\begin{frontmatter}

\title{Criterion of Bari basis property \\
for $2 \times 2$ Dirac-type operators \\
with strictly regular boundary conditions}

\author{Anton~A.~Lunyov}
\ead{A.A.Lunyov@gmail.com}
\address{
Facebook, Inc.,
1 Hacker Way, Menlo Park,
California, 94025,
United States of America}

\begin{abstract}
The paper is concerned with the Bari basis property of a boundary value problem associated in $L^2([0,1]; \mathbb{C}^2)$ with the following $2 \times 2$ Dirac-type equation for $y = \col(y_1, y_2)$:
$$
 L_U(Q) y = -i B^{-1} y' + Q(x) y = \lambda y , \quad
 B = \begin{pmatrix} b_1 & 0 \\ 0 & b_2 \end{pmatrix}, \quad b_1 < 0 < b_2,
$$
with a potential matrix $Q \in L^2([0,1]; \mathbb{C}^{2 \times 2})$ and subject to the strictly regular boundary conditions $Uy :=\{U_1, U_2\}y=0$. If $b_2 = -b_1 =1$ this equation is equivalent to one dimensional Dirac equation.
We show that the system of root vectors $\{f_n\}_{n \in \bZ}$ of the operator $L_U(Q)$ \emph{forms a Bari basis in} $L^2([0,1]; \mathbb{C}^2)$ if and only if the unperturbed operator $L_U(0)$ is self-adjoint. We also give explicit conditions for this in terms of coefficients in the boundary conditions.

The Bari basis criterion is a consequence of our more general result:
Let $Q \in L^p([0,1]; \mathbb{C}^{2 \times 2})$, $p \in [1,2]$, boundary conditions be strictly regular, and let $\{g_n\}_{n \in \bZ}$ be the sequence biorthogonal to the system of root vectors $\{f_n\}_{n \in \bZ}$ of the operator $L_U(Q)$. Then
$$
\{\|f_n - g_n\|_2\}_{n \in \bZ} \in (\ell^p(\bZ))^*
\quad\Leftrightarrow\quad
L_U(0) = L_U(0)^*.
$$

These abstract results are applied to non-canonical initial-boundary value problem for a damped string equation.
\end{abstract}

\begin{keyword}
Dirac-type systems
\sep regular and strictly regular boundary conditions
\sep Bari basis property
\sep equidistribution theorem
\sep damped string equation

\MSC 34L10 \sep 34L15 \sep 34L40 \sep 35L20 \sep 47E05

\end{keyword}

\end{frontmatter}

\renewcommand{\contentsname}{Contents}
\tableofcontents

\section{Introduction} \label{sec:Intro}
Consider the following first order system of differential equations
\begin{equation} \label{eq:system}
 \cL y = -i B^{-1} y' + Q(x) y=\l y, \qquad y = \col(y_1,y_2),
 \qquad x \in [0,1],
\end{equation}
where
\begin{equation} \label{eq:BQ}
 B = \begin{pmatrix} b_1 & 0 \\ 0 & b_2 \end{pmatrix},
 \quad b_1 < 0 < b_2 \quad \text{and}\quad
 Q = \begin{pmatrix} 0 & Q_{12} \\ Q_{21} & 0 \end{pmatrix} \in \LL{2}.
\end{equation}

If $B = \begin{psmallmatrix} -1 & 0 \\ 0 & 1 \end{psmallmatrix}$
system~\eqref{eq:system} is equivalent to the Dirac system (see the classical
monographs~\cite{LevSar88},~\cite{Mar77}).

Let us associate linearly independent boundary conditions
\begin{equation} \label{eq:cond}
 U_j(y) := a_{j 1}y_1(0) + a_{j 2}y_2(0) + a_{j 3}y_1(1) + a_{j 4}y_2(1)= 0,
 \quad j \in \{1,2\},
\end{equation}
with system~\eqref{eq:system}, and denote as $L_U(Q) := L_U(Q)$ an operator, associated in $\fH := \LLV{2}$ with the boundary value problem (BVP)~\eqref{eq:system}--\eqref{eq:cond}. It is defined by differential expression $\cL$ on the domain
\begin{equation}
 \dom(L_U(Q)) = \{f \in {\rm AC}([0,1]; \bC^2) :\ \cL f \in \fH,
 \ U_1(f) = U_2(f) = 0\}.
\end{equation}

To the best of our knowledge, the spectral properties of the general $n \times n$ system of the form~\eqref{eq:system} with a nonsingular diagonal $n\times n$ matrix $B$ with complex entries and a potential matrix $Q(\cdot)$ of the form
\begin{equation} \label{1.2}
 B = \diag(b_1, \ldots, b_n) \in \bC^{n\times n} \quad \text{and}\quad
 Q(\cdot) =: (q_{jk}(\cdot))_{j,k=1}^n \in C^1([0,1]; \bC^{n\times n}).
\end{equation}
has first been investigated by G.D.~Birkhoff and R.E.~Langer~\cite{BirLan23}. Namely, they introduced the concepts of \emph{regular and strictly regular boundary conditions}~\eqref{eq:cond} and investigated the asymptotic behavior of eigenvalues and eigenfunctions of the corresponding operator $L_{U}(Q)$. Moreover, they proved \emph{a pointwise convergence result} on spectral decompositions of the operator $L_{U}(Q)$ corresponding to the BVP~\eqref{eq:system}--\eqref{eq:cond} with regular boundary conditions.

The completeness property in $L^2([0,1]; \bC^n)$ of the system of root vectors of BVP for general $n \times n$ system of the form~\eqref{eq:system} with matrices $B = \diag(b_1, \ldots, b_n)$ and $Q \in L^1([0,1]; \bC^{n\times n})$ was established for the first time by M.M.~Malamud and L.L.~Oridoriga in~\cite{MalOri12} for a wide class of BVPs, although for $2 \times 2$ Dirac system with $Q\in C([0,1]; \bC^{2 \times 2})$ it was proved earlier by V.A.~Marchenko in~\cite[Chapter 1.3]{Mar77}. As a development of~\cite{MalOri12},
in~\cite{AgiMalOri12,LunMal14IEOT,LunMal15JST,ALMO19}
completeness conditions for non-regular and even degenerate boundary conditions were found with applications to dissipative and normal operators.
In the joint paper~\cite{LunMal15JST} the author and M.M.~Malamud
also established the Riesz basis property with parentheses of the system of root vectors for different classes of BVPs for $n \times n$ system with arbitrary $B$ of the form~\eqref{1.2} and $Q \in L^{\infty}([0,1]; \bC^{n\times n})$.
Note also that BVP for $2m \times 2m$ Dirac equation ($B=\diag(-I_m, I_m)$) were investigated in~\cite{MykPuy13} (Bari-Markus property for Dirichlet BVP with $Q \in L^2([0,1]; \bC^{2m \times 2m})$ and in~\cite{KurAbd18,KurGad20} (Bessel and Riesz basis properties on abstract level).

The Riesz basis property in $\LLV{2}$ of BVP~\eqref{eq:system}--\eqref{eq:cond}
for $2 \times 2$ Dirac system ($b_2 = -b_1 = 1$) with various assumptions on the potential matrix $Q$ was investigated in numerous papers (see~\cite{TroYam01,TroYam02,HasOri09,DjaMit10,Bask11,DjaMit12UncDir,DjaMit12Crit,LunMal14Dokl,SavShk14,LunMal16JMAA}
and references therein). The case of separated boundary conditions and $Q \in C^1([0,1]; \bC^{2 \times 2})$ was treated in~\cite{TroYam01,TroYam02} and later in~\cite{HasOri09} for Dirac-type system. For Dirichlet and periodic boundary conditions the case $Q \in \LL{2}$ was first treated by P.~Djakov and B.~Mityagin~\cite{DjaMit10} and later by A.~Baskakov, A.~Derbushev, A.~Shcherbakov~\cite{Bask11}. Shortly after, P.~Djakov and B.~Mityagin~\cite{DjaMit12UncDir} extended these results to general regular boundary conditions.

The most complete result on the Riesz basis property for $2\times 2$ Dirac and Dirac-type systems, respectively, with $Q \in \LL{1}$ and strictly regular boundary conditions was obtained independently by different methods and at the same time by A.M.~Savchuk and A.A.~Shkalikov~\cite{SavShk14}
and by the author and M.M.~Malamud~\cite{LunMal14Dokl,LunMal16JMAA} (in~\cite{LunMal14Dokl} the sketches of the proof are given). The case of regular boundary conditions is treated in~\cite{SavShk14} for the first time. Other proofs were obtained later on in~\cite{SavSad15DAN,SavSad15,LunMal16JMAA} (see also recent survey~\cite{SavSad20} and references therein).

In~\cite{LunMal16JMAA} results for Dirac operator were also applied to Timoshenko beam model. In general one can show that dynamic generators of many physical models governed by systems of linear PDE are similar to certain first order differential operators. Hence spectral properties of such operators are of significant importance in the study of stability of solutions and corresponding $C_0$-semigroups of many physical models. In particular, in Section~\ref{sec:damped.string} we establish explicit connection between $2 \times 2$ Dirac type operators (with $b_1 \ne -b_2$) on the one hand and certain \textbf{non-canonical initial-boundary value problem for a damped string} on the other hand. This allows us to apply results on Bari and Riesz basis property (see Definition~\ref{def:bases}) for Dirac type operators obtained here and in~\cite{LunMal16JMAA} to the damped string equation.

Recall, that boundary conditions~\eqref{eq:cond} are called \textbf{regular}, if and only if they are \emph{equivalent} to the following conditions
\begin{equation} \label{eq:cond.canon.intro}
 \wh{U}_{1}(y) = y_1(0) + b y_2(0) + a y_1(1) = 0,\quad
 \wh{U}_{2}(y) = d y_2(0) + c y_1(1) + y_2(1) = 0,
\end{equation}
with certain $a,b,c,d \in \bC$ satisfying $ad-bc \ne 0$. Recall also that regular boundary conditions~\eqref{eq:cond} are called \textbf{strictly regular}, if the sequence $\{\l_n^0\}_{n \in \bZ}$ of the eigenvalues of the unperturbed ($Q=0$) BVP~\eqref{eq:system}--\eqref{eq:cond} (of the operator $L_U(0)$), is asymptotically separated. In particular, the eigenvalues $\{\l_n^0\}_{|n| > n_0}$ are geometrically and algebraically simple.

It is well known that \emph{non-degenerate separated} boundary conditions are always \emph{strictly regular}. Moreover, conditions~\eqref{eq:cond.canon.intro} \emph{are strictly regular for Dirac operator if and only if} $(a-d)^2 \ne -4bc$. In particular, antiperiodic ($a=d=0$, $b=c=1$) boundary conditions \emph{are regular but not strictly regular} for Dirac system, while they \emph{become strictly regular for Dirac-type system} if $-b_1, b_2 \in \bN$ and $b_2 - b_1$ is odd.

Note in this connection that \emph{periodic and antiperiodic (necessarily non-strictly regular) BVP} for $2 \times 2$ Dirac and Sturm-Liouville equations have also attracted certain attention during the last decade. For\ instance, a criterion for the system of root vectors of the \emph{periodic} BVP for $2 \times 2$ Dirac equation to contain a Riesz basis (without parentheses!) was obtained by P. Djakov and B. Mityagin in~\cite{DjaMit12Crit} (see also
recent papers~\cite{Mak19},~\cite{Mak20} by A.S.~Makin and the references therein). It is also worth mentioning that F.~Gesztesy and V.~Tkachenko~\cite{GesTka09,GesTka12} for $q \in L^2[0,\pi]$ and P.~Djakov and B.~Mityagin~\cite{DjaMit12Crit} for $q \in W^{-1,2}[0,\pi]$ established by different methods a \emph{criterion} for the system of root vectors to contain a Riesz basis for Sturm-Liouville operator $-\frac{d^2}{dx^2} + q(x)$ on $[0,\pi]$ (see also survey~\cite{Mak12}).

Let us emphasize that the proof of the Riesz basis property in~\cite{DjaMit10,Bask11,DjaMit12UncDir,MykPuy13} substantially relies on the Bari-Markus property: the quadratic closeness in $\LLV{2}$ of the spectral projectors of the operators $L_U(Q)$ and $L_U(0)$.
Assuming boundary conditions to be strictly regular, let $\{f_n\}_{n \in \bZ}$ and $\{f_n^0\}_{n \in \bZ}$ be the systems of root vectors of the operators $L_U(Q)$ and $L_U(0)$, respectively. Then Bari-Markus property states the implication: $Q \in L^2 \Rightarrow \sum_{n \in \bZ} \|f_n - f_n^0\|_2^2 < \infty$. Later, this property was generalized to the case $Q \in \LL{p}$, $p \in [1,2]$, in~\cite{SavShk14,Sad16,LunMal22JDE}. The most complete results in this direction were established
in the joint paper~\cite{LunMal22JDE} by the author and M.M.~Malamud. One of these results reads as follows.
\begin{theorem}[Theorem 7.15 in \cite{LunMal22JDE}] \label{th:ellp-close}
Let $\cK \in \LL{p}$ be a compact set for some $p \in [1,2]$, let $Q, \wt{Q} \in \cK$ and boundary conditions~\eqref{eq:cond} be strictly regular. Then for some normalized systems of root vectors $\{f_n\}_{n \in \bZ}$ and $\{\wt{f}_n\}_{n \in \bZ}$ of the operators $L_U(Q)$ and $L_U(\wt{Q})$ the following uniform relations hold for $Q, \wt{Q} \in \cK$:
\begin{align}
\label{eq:sum.fn-fn0}
 & \sum_{|n| > N} \|f_n - \wt{f}_n\|_{\infty}^{p'} \le C
 \|Q - \wt{Q}\|_p^{p'}, \qquad p \in (1,2], \quad 1/p'+1/p=1, \\
\label{eq:sum.fn-fn0.hardy}
 & \sum_{|n| > N} (1+|n|)^{p-2} \|f_n - \wt{f}_n\|_{\infty}^{p} \le
 C \|Q - \wt{Q}\|_p^p, \qquad p \in (1,2], \\
\label{eq:lim.fn-fn0.c0}
 & \lim_{n \to \infty} \sup_{Q, \wt{Q} \in \cK}
 \|f_n - \wt{f}_n\|_{\infty} = 0, \qquad p = 1.
\end{align}
\end{theorem}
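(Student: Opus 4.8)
The plan is to reduce the comparison of the root functions to two more tractable comparisons --- of the eigenvalues and of the transformation-operator kernels --- and to feed both into an explicit formula for the root functions. Let $E_Q(\cdot,\lambda)$ be the fundamental matrix solution of $\cL y=\lambda y$ normalized by $E_Q(0,\lambda)=I$. Iterating the integral equation $E_Q(x,\lambda)=e^{i\lambda Bx}-i\int_0^x e^{i\lambda B(x-t)}BQ(t)E_Q(t,\lambda)\,dt$ produces the triangular transformation-operator representation $E_Q(x,\lambda)=e^{i\lambda Bx}+\int_{-x}^{x}\cR_Q(x,s)\,\diag(e^{ib_1\lambda s},e^{ib_2\lambda s})\,ds$, whose kernel obeys a Lipschitz bound $\norm{\cR_Q-\cR_{\wt Q}}\le C\norm{Q-\wt Q}_p$ in the relevant mixed norm, with $C$ depending only on $\sup_{Q\in\cK}\norm{Q}_p$; this bound --- the technical backbone for $L^p$ potentials --- I would quote from the theory of transformation operators for Dirac-type systems. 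Its virtue is that the $\lambda$-dependence of $E_Q$ is now fully explicit: applying the boundary functionals $\{U_1,U_2\}$ of \eqref{eq:cond} to the columns of $E_Q(\cdot,\lambda)$, the characteristic determinant $\Delta_Q(\lambda)$ becomes $\Delta_0(\lambda)$ plus a Fourier-type integral of $\cR_Q$ against $e^{ib_j\lambda s}$, and the Hausdorff--Young inequality turns $L^p$-bounds on $\cR_Q$ into $\ell^{p'}$-summable bounds on the coefficients measuring the deviation of $\Delta_Q$ from $\Delta_0$ along the relevant horizontal lines in the $\lambda$-plane.

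Next, eigenvalue localization and stability. Strict regularity means that the zeros $\{\lambda_n^0\}_{n\in\bZ}$ of $\Delta_0$ are asymptotically separated and simple, with $|\Delta_0'(\lambda_n^0)|$ bounded below; a Rouch\'{e} argument on small fixed discs around the $\lambda_n^0$ then produces, for every $Q\in\cK$ and all $|n|>N$, exactly one simple zero $\lambda_n(Q)$ in each disc. Compactness of $\cK$ enters precisely here, to make $N$ and all constants uniform: a precompact set in $L^p$ is bounded and uniformly $L^p$-equicontinuous, whence the tails of the coefficient sequences attached to $\cR_Q$ are small uniformly for $Q\in\cK$. From $\Delta_Q(\lambda_n(Q))=0$, the mean value theorem together with the $\ell^{p'}$ Hausdorff--Young bound gives $\bignorm{\{\lambda_n(Q)-\lambda_n(\wt Q)\}_{|n|>N}}_{\ell^{p'}}\le C\norm{Q-\wt Q}_p$ for $p\in(1,2]$, the analogous weighted-$\ell^p$ (Hardy--Littlewood) estimate needed for \eqref{eq:sum.fn-fn0.hardy}, and, for $p=1$, the uniform statement $\sup_{Q,\wt Q\in\cK}|\lambda_n(Q)-\lambda_n(\wt Q)|\to 0$.

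Finally, the root functions. Write $f_n(\cdot)=E_Q(\cdot,\lambda_n(Q))\kappa_n(Q)$, where $\kappa_n(Q)\in\bC^2$ spans the one-dimensional kernel of the $2\times2$ matrix obtained by applying $\{U_1,U_2\}$ to the columns of $E_Q(\cdot,\lambda_n(Q))$; I would fix $\kappa_n(Q)$ by a definite continuous recipe from that matrix and normalize it, so that no spurious unimodular factor survives --- this is the force of ``for some normalized systems'' in the statement. Strict regularity keeps the relevant $2\times2$ minors bounded away from $0$ uniformly over $Q\in\cK$ and $|n|>N$, so $\kappa_n$ is well defined and Lipschitz in $(\lambda,Q)$. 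Telescoping,
\begin{align*}
 f_n-\wt f_n
 &=\bigl[E_Q(\cdot,\lambda_n(Q))-E_Q(\cdot,\lambda_n(\wt Q))\bigr]\kappa_n(Q)
 +\bigl[E_Q(\cdot,\lambda_n(\wt Q))-E_{\wt Q}(\cdot,\lambda_n(\wt Q))\bigr]\kappa_n(Q) \\
 &\quad+E_{\wt Q}(\cdot,\lambda_n(\wt Q))\bigl[\kappa_n(Q)-\kappa_n(\wt Q)\bigr],
\end{align*}
one bounds the three terms in $\norm{\cdot}_\infty$ by, respectively, $C\,|\lambda_n(Q)-\lambda_n(\wt Q)|$ (via $\partial_\lambda E_Q$), $C\norm{\cR_Q-\cR_{\wt Q}}$ plus the high-frequency content of that kernel difference, and $C\,|\kappa_n(Q)-\kappa_n(\wt Q)|$, the last again controlled by the first two mechanisms. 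Raising to the power $p'$ (respectively, weighting by $(1+|n|)^{p-2}$ and raising to the power $p$) and summing over $|n|>N$, the eigenvalue-stability and Hausdorff--Young bounds deliver \eqref{eq:sum.fn-fn0} and \eqref{eq:sum.fn-fn0.hardy}; the same decomposition, fed by the uniform $o(1)$ estimates, delivers \eqref{eq:lim.fn-fn0.c0} for $p=1$.

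The main obstacle is not any individual estimate but the \emph{uniformity over $\cK$}: converting precompactness in $L^p$ into a uniform high-frequency cutoff $N$ and uniform constants in the localization step, and, for $p=1$, upgrading the pointwise-in-$Q$ decay $\norm{f_n-f_n^0}_\infty\to 0$ to decay uniform over $\cK$ --- the latter by covering $\cK$ with finitely many small $L^1$-balls and exploiting the Lipschitz dependence of all the ingredients on $Q$. A subsidiary but genuinely necessary point is the consistent choice of the null vectors $\kappa_n$: without it the differences $f_n-\wt f_n$ would be polluted by arbitrary phases, and it is exactly strict regularity that makes the natural choice continuous and uniformly bounded below.
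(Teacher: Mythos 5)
This statement is not proved in the paper at all: it is imported verbatim as Theorem~7.15 of \cite{LunMal22JDE}, and the author only comments on how the proof there goes --- via the transformation operator method of \cite{Mal94,Mal99,LunMal16JMAA} and, crucially, the Carleson--Hunt theorem for the $\|\cdot\|_{\infty}$-norm estimates, with the explicit remark that only the weaker variant with $\|\cdot\|_{p'}$-norms in place of $\|\cdot\|_{\infty}$-norms admits an ``elementary'' proof. Your architecture (triangular transformation-operator representation of $E_Q(\cdot,\lambda)$, Lipschitz dependence of the kernel on $Q$, Rouch\'e localization of $\lambda_n(Q)$ near $\lambda_n^0$ with uniformity over the compact set $\cK$, a consistent choice of the null vectors $\kappa_n$, and the three-term telescoping of $f_n-\wt f_n$) is essentially the strategy of the cited source, so in outline you are on the right track.

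The genuine gap is at the step where you claim that ``the Hausdorff--Young inequality turns $L^p$-bounds on $\cR_Q$ into $\ell^{p'}$-summable bounds'' and then sum the \emph{sup-norms} $\|f_n-\wt f_n\|_{\infty}^{p'}$ over $n$. The quantity you must control is of the form $\sum_{|n|>N}\sup_{x\in[0,1]}\bigabs{\int_{-x}^{x}K(x,s)\,e^{ib_j\lambda_n s}\,ds}^{p'}$ with $K=\cR_Q-\cR_{\wt Q}$ controlled only in an $L^p$-type mixed norm. Hausdorff--Young (or rather its nonharmonic analogue for the separated sequence $\{\lambda_n\}$ lying in a horizontal strip, which itself needs a Plancherel--P\'olya/Bessel-type argument you do not supply) gives $\ell^{p'}$-summability of the values at the points $\lambda_n$ \emph{for each fixed $x$}, i.e.\ it yields \eqref{eq:sum.fn-fn0}--\eqref{eq:sum.fn-fn0.hardy} with $\|\cdot\|_{p'}$ or $\|\cdot\|_{p}$ norms in $x$, but it does not let you put the supremum over $x$ inside the sum over $n$: the variable upper limit $\int_{-x}^{x}$ makes this a maximal partial-Fourier-integral operator, and the summed sup-norm bound is exactly where the Carleson--Hunt theorem is invoked in \cite{LunMal22JDE}. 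The same issue affects your $p=1$ case \eqref{eq:lim.fn-fn0.c0}, where uniformity in $x$ of the Riemann--Lebesgue-type decay for an $x$-dependent kernel has to be argued, not just uniformity in $Q\in\cK$ (which you do address via equicontinuity and covering). So as written the proposal proves a weaker theorem than the one stated; to close it you must either invoke Carleson--Hunt (as the source does) or replace the $\|\cdot\|_{\infty}$-norms by $L^{p'}$-norms in the conclusion --- which, incidentally, would still suffice for the applications in this paper, since only $\|f_n-\wt f_n\|_2$ is used later.
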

Here and throughout the paper we denote by $\|f\|_s$ the $L^s$-norm of the element $f$ of a scalar, vector or matrix $L^s$-space.

Emphasize, that the proof of the estimates~\eqref{eq:sum.fn-fn0}--\eqref{eq:sum.fn-fn0.hardy} is based on the deep Carleson-Hunt theorem. Note, however, that these estimates with $\|\cdot\|_{p'}$-norm instead of $\|\cdot\|_{\infty}$-norm can be proved in a more direct way, which is elementary in character. Note also that these results substantially rely on transformation operators method that goes back to~\cite{Mal94,Mal99,LunMal16JMAA}.

Recall that the concepts of Riesz bases and bases quadratically close to the orthonormal bases were introduced by N.K.~Bari in~\cite{Bari51}. Results of this fundamental paper can also be found in the classical monograph~\cite{GohKre65} where a basis quadratically close to the orthonormal basis is called a Bari basis. Let us recall the definition of Riesz and Bari bases following~\cite[Section IV]{GohKre65}.
\begin{definition} \label{def:bases}
\textbf{(i)} A sequence of vectors $\{f_n\}_{n \in \bZ}$ in a separable Hilbert space $\fH$ is called a \textbf{Riesz basis} if it admits a representation $f_n = T e_n$, $n \in \bN$, where $\{e_n\}_{n \in \bZ}$ is an orthonormal basis in $\fH$ and $T : \fH \to \fH$ is a bounded operator with bounded inverse.

\textbf{(ii)} A sequence of vectors $\{f_n\}_{n \in \bZ}$ in a separable Hilbert space $\fH$ is called a \textbf{Bari basis} if it is quadratically close to some orthonormal basis $\{e_n\}_{n \in \bZ}$ in $\fH$, i.e.
\begin{equation} \label{eq:sum.fn-en}
 \sum_{n \in \bZ} \|f_n - e_n\|_{\fH}^2 < \infty.
\end{equation}
\end{definition}
A.S.~Markus in~\cite{Markus69} studied in detail bases of subspaces with the property similar to~\eqref{eq:sum.fn-en}. Bari basis property for different classes of differential operators was studied in~\cite{BDL00,Zhidkov02,Allah14}.
Note, however, that to the best of our knowledge the question of whether system of root vectors of the operator $L_U(Q)$ forms \emph{a Bari basis  has not been studied before}. Namely, results of papers~\cite{DjaMit10,Bask11,DjaMit12UncDir,MykPuy13,SavShk14,LunMal22JDE} in the case of $Q \in L^2$ and strictly regular boundary conditions establish quadratic closeness of systems of root vectors $\{f_n\}_{n \in \bZ}$ and $\{f_n^0\}_{n \in \bZ}$, but whether $\{f_n\}_{n \in \bZ}$ is quadratically close to some orthonormal basis $\{e_n\}_{n \in \bZ}$ remained an open question. The goal if this paper is to close this gap.
One of our main results establishes the criterion for the system of root vectors of the operator $L_U(Q)$ to form a Bari basis and reads as follows.
\begin{theorem} \label{th:crit.bari}
Let boundary conditions~\eqref{eq:cond.canon.intro} be strictly regular and let $Q \in \LL{2}$. Then some normalized system of root vectors of the operator $L_U(Q)$ is a Bari basis in $\LLV{2}$ if and only if the operator $L_U(0)$ is self-adjoint. The latter holds if and only if the coefficients $a,b,c,d$ in boundary conditions~\eqref{eq:cond.canon.intro} satisfy the following relations:
\begin{equation} \label{eq:abcd.sa.intro}
 |a|^2 + \beta |b|^2 = 1, \qquad
 |c|^2 + \beta |d|^2 = \beta, \qquad
 a \ol{c} + \beta b \ol{d} = 0, \qquad \beta := -b_2/b_1 > 0.
\end{equation}
In this case every normalized system of root vectors of the operator $L_U(Q)$ is a Bari basis in $\LLV{2}$.
\end{theorem}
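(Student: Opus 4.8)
The plan is to combine the quadratic-closeness estimate of Theorem~\ref{th:ellp-close} (for $p=2$) with a direct computation identifying when the unperturbed system $\{f_n^0\}_{n\in\bZ}$ is itself quadratically close to an orthonormal basis, and then to reduce the latter to an algebraic condition on $a,b,c,d$. First I would recall that by Theorem~\ref{th:ellp-close} with $\wt Q=0$ and $p=2$ one has $\sum_{n\in\bZ}\|f_n-f_n^0\|_2^2<\infty$ for suitably normalized root systems. Hence, by the triangle inequality in $\ell^2$, a normalized system $\{f_n\}$ is quadratically close to an orthonormal basis $\{e_n\}$ \emph{if and only if} the normalized unperturbed system $\{f_n^0\}$ is quadratically close to $\{e_n\}$; in particular the Bari basis property of $\{f_n\}$ does not depend on $Q\in\LL2$, which already yields the last sentence of the theorem once the criterion itself is established. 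So the problem is reduced to the $Q=0$ case.

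Next I would analyze $L_U(0)$ explicitly. For $Q=0$ the equation $-iB^{-1}y'=\l y$ is solved by exponentials $y_1=e^{ib_1\l x}$, $y_2=e^{ib_2\l x}$ (up to constants), so the eigenvalues $\l_n^0$ and eigenvectors $f_n^0$ can be written down in closed form by imposing~\eqref{eq:cond.canon.intro}; strict regularity guarantees they are asymptotically simple and asymptotically equidistributed with spacing $2\pi/(b_2-b_1)$. The eigenvectors are, up to normalization, of the form $f_n^0(x)=\col(\alpha_n e^{ib_1\l_n^0 x},\,\gamma_n e^{ib_2\l_n^0 x})$ with $(\alpha_n,\gamma_n)$ determined by the boundary conditions. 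The key point is that the natural "model" orthonormal basis consists of the eigenfunctions of the \emph{self-adjoint} operator $L_{U_0}(0)$ attached to the canonical self-adjoint boundary conditions, and one must decide when $\{f_n^0\}$, after normalization in $L^2$, is quadratically close to \emph{some} orthonormal basis. I expect that the cleanest route is: $\{f_n^0\}$ is a Bari basis $\iff$ the Gram matrix $(\langle f_n^0,f_m^0\rangle)$ differs from the identity by a Hilbert–Schmidt operator $\iff$ the off-diagonal scalar products $\langle f_n^0,f_m^0\rangle$ are square-summable and the diagonal ones tend to $1$; and a computation of these scalar products via the explicit exponentials reduces everything to the asymptotics of $(\alpha_n,\gamma_n)$ and of $\Im\l_n^0$.

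Then I would show that $\{f_n^0\}$ is quadratically close to an orthonormal basis precisely when $L_U(0)=L_U(0)^*$. One implication is immediate: if $L_U(0)$ is self-adjoint then its normalized eigenvectors already form an orthonormal basis (the spectrum being asymptotically simple, with finitely many possible exceptional multiple eigenvalues handled by choosing an orthonormal basis in the corresponding finite-dimensional root subspaces), so~\eqref{eq:sum.fn-en} holds trivially with zero terms. For the converse I would argue that if $L_U(0)$ is \emph{not} self-adjoint then, since $L_U(0)$ has compact resolvent and discrete spectrum, non-self-adjointness forces either $\Im\l_n^0\not\to 0$ fast enough to be square-summable-after-normalization, or the eigenvectors $f_n^0$ fail to be asymptotically orthogonal, i.e. $\langle f_n^0/\|f_n^0\|,\,f_m^0/\|f_m^0\|\rangle$ is not square-summable off the diagonal; in either case no orthonormal basis can be quadratically close to $\{f_n^0\}$ (a quadratically close perturbation of an orthonormal basis is automatically a Riesz basis with "defect" a Hilbert–Schmidt operator, and this is incompatible with the computed asymptotics). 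Finally, the translation of "$L_U(0)=L_U(0)^*$" into the explicit relations~\eqref{eq:abcd.sa.intro} is a finite linear-algebra computation: writing the adjoint boundary-value problem for the formally symmetric expression $-iB^{-1}\tfrac{d}{dx}$ with weight encoded by $\beta=-b_2/b_1$, one equates the boundary form of $L_U(0)$ with that of its adjoint and reads off that the coefficient vectors $(1,b,a,0)$ and $(0,d,c,1)$ must span a Lagrangian (with respect to the indefinite form $\diag(b_1,b_2,-b_1,-b_2)$, equivalently $\diag(1,\beta,-1,-\beta)$ after scaling) subspace, which is exactly~\eqref{eq:abcd.sa.intro}.

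\medskip
The main obstacle I anticipate is the converse direction of the middle step: showing that non-self-adjointness of $L_U(0)$ genuinely obstructs quadratic closeness to \emph{any} orthonormal basis, not merely to the eigenbasis of the canonical self-adjoint operator. The difficulty is that "being a Bari basis" is an intrinsic property, so one cannot simply compare with one preferred basis; the right tool is the characterization that $\{h_n\}$ is a Bari basis iff it is complete, minimal, and the operator $I - G$ is Hilbert–Schmidt where $G$ is the Gram operator of the normalized system — and then one must extract from strict regularity sufficiently sharp asymptotics of $\l_n^0$ and of the normalized eigenvectors to see that $I-G$ is Hilbert–Schmidt \emph{iff} the imaginary parts vanish and the eigenvectors are asymptotically orthonormal, which in turn happens iff~\eqref{eq:abcd.sa.intro} holds. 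Carrying out this asymptotic analysis carefully, including the bookkeeping of the finitely many low-index eigenvalues that may be multiple or non-semisimple, is where the real work lies.
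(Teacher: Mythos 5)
Your reduction to the unperturbed operator is sound and matches the paper's strategy: Theorem~\ref{th:ellp-close} with $p=2$, $\wt Q=0$ gives $\sum_n\|f_n-f_n^0\|_2^2<\infty$, so by the triangle inequality in $\ell^2$ the Bari basis property of $\{f_n\}$ is equivalent to that of $\{f_n^0\}$, and the ``some vs.\ every'' clause and the sufficiency direction (self-adjoint $L_U(0)$ has an orthonormal eigenbasis, hence $\{f_n\}$ is quadratically close to it) go through essentially as you say; the translation of $L_U(0)=L_U(0)^*$ into~\eqref{eq:abcd.sa.intro} is indeed elementary linear algebra (cf.\ Corollary~\ref{cor:sa.crit}). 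The genuine gap is the converse for $Q=0$, which you state but do not prove: you assert that non-self-adjointness of $L_U(0)$ forces either $\Im\l_n^0\not\to0$ or failure of asymptotic orthogonality of the eigenvectors, and that ``asymptotic orthonormality happens iff~\eqref{eq:abcd.sa.intro} holds''. That last equivalence is the entire content of the hard half of the theorem, and nothing in your sketch produces it. Concretely, after computing the Gram data (or, as the paper does via Proposition~\ref{prop:crit.bari} and Lemma~\ref{lem:crit.bari.not.norm}, the quantities $\|f_n^0\|\,\|g_n^0\|/|(f_n^0,g_n^0)|$), the Bari property of $\{f_n^0\}$ reduces only to \emph{asymptotic} conditions on the spectrum: $|c|=\beta|b|$, $\Im\l_n^0\to0$ and $z_n=(1+de^{-ib_2\l_n^0})\overline{(1+ae^{ib_1\l_n^0})}\to|bc|$ (Proposition~\ref{prop:crit.bari.b.ne.0}). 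Passing from these limit relations to the exact algebraic identities~\eqref{eq:abcd.sa.intro} is not an asymptotic refinement of eigenvalue formulas: when $b_2/b_1\notin\bQ$, $bc\ne0$ and $|a|+|d|>0$ there is no closed form for $\l_n^0$, and the paper has to invoke Weyl's equidistribution theorem to show that $\{e^{ib_1\l_n^0}\}_{n\in\bZ}$ has infinitely many limit points (Proposition~\ref{prop:nlim.inf}), which is what forces the linear relation $(|bc|u+bc)e_{1n}+|bc|d+bc\ol a\to0$ to degenerate into the coefficient identities (Lemma~\ref{lem:lim.Imln}), plus separate arguments for the cases $b=c=0$, $a=d=0$ and $b_2/b_1\in\bQ$ (Theorem~\ref{th:crit.c0.bari}). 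Your proposal contains no mechanism of this Diophantine/equidistribution type, and ``sufficiently sharp asymptotics of $\l_n^0$'' cannot substitute for it; a priori your sketch does not exclude a non-self-adjoint $L_U(0)$ with asymptotically real spectrum and asymptotically orthonormal eigenvectors, which is exactly what must be ruled out.

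A secondary caution: your Gram-operator criterion (``Bari basis iff $I-G$ is Hilbert--Schmidt'') needs completeness/minimality hypotheses and, more importantly, for the generalization to $p<2$ used elsewhere in the paper closeness to an orthonormal basis is \emph{not} equivalent to closeness to the biorthogonal system (Remark~\ref{rem:c0.bari.diff}); for the present $p=2$ statement the Gohberg--Krein criterion (Proposition~\ref{prop:crit.bari}) makes the two coincide, and working with the explicitly computable biorthogonal eigenvectors of $L_U^*(0)$ is what makes the computation tractable, so you would be well advised to follow that route rather than estimating a full Gram matrix.
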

Combining Theorem~\ref{th:crit.bari} with the results of the previous papers
\cite{DjaMit10,Bask11,DjaMit12UncDir,MykPuy13,LunMal14Dokl,LunMal16JMAA,SavShk14}
concerning the Riesz basis property we get the following surprising result.
\begin{corollary} \label{cor:not.bari}
Let $Q \in \LL{2}$ and let boundary conditions~\eqref{eq:cond.canon.intro} be strictly regular but not self-adjoint, i.e. the operator $L_U(0)$ is not self-adjoint. Then every normalized system of root vectors of the operator $L_U(Q)$ is \textbf{a Riesz basis but not a Bari basis} in $\LLV{2}$.
\end{corollary}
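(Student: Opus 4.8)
The plan is to deduce Corollary~\ref{cor:not.bari} by combining two ingredients that are already at our disposal: the Riesz basis theorem from the literature and the newly established Theorem~\ref{th:crit.bari}.

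For the Riesz basis assertion, I would invoke the fact that for $Q \in \LL{2} \subset \LL{1}$ and strictly regular boundary conditions~\eqref{eq:cond.canon.intro} the system of root vectors of $L_U(Q)$ forms a Riesz basis in $\LLV{2}$; for Dirac-type systems this was proved by A.M.~Savchuk and A.A.~Shkalikov~\cite{SavShk14} and, independently, by the author and M.M.~Malamud~\cite{LunMal14Dokl,LunMal16JMAA} (see also~\cite{DjaMit10,Bask11,DjaMit12UncDir,MykPuy13} for earlier particular cases). Then I would add the elementary remark that this property survives renormalization: if $\{f_n\}_{n\in\bZ}$ is a Riesz basis one has $0 < \inf_n \|f_n\|_2 \le \sup_n \|f_n\|_2 < \infty$, so the diagonal operator $f_n \mapsto \|f_n\|_2^{-1} f_n$ is bounded and boundedly invertible; since, for strictly regular boundary conditions, only finitely many root subspaces of $L_U(Q)$ can have dimension exceeding one, any two normalized systems of root vectors differ by a bounded invertible operator (unimodular scalars on the simple eigenvalues, an invertible finite block on the remaining ones). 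Hence \emph{every} normalized system of root vectors of $L_U(Q)$ is a Riesz basis in $\LLV{2}$.

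For the negative assertion I would argue directly from Theorem~\ref{th:crit.bari}: under the standing hypothesis the boundary conditions are strictly regular and $L_U(0)$ is \emph{not} self-adjoint, so by the criterion proved there, self-adjointness of $L_U(0)$ being necessary for even one normalized system of root vectors of $L_U(Q)$ to be a Bari basis, \emph{no} normalized system of root vectors of $L_U(Q)$ can be a Bari basis. Equivalently, $\sum_{n\in\bZ}\|f_n - e_n\|_2^2 = \infty$ for every normalized system $\{f_n\}_{n\in\bZ}$ of root vectors and every orthonormal basis $\{e_n\}_{n\in\bZ}$ of $\LLV{2}$.

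I do not expect any real obstacle: the corollary is a formal consequence of Theorem~\ref{th:crit.bari} together with the cited Riesz basis results, and its content is precisely that, although every Riesz basis is by definition the image of an orthonormal basis under a bounded invertible map, it need not be \emph{quadratically close} to one — and for the root system of $L_U(Q)$ this dichotomy is governed entirely by the self-adjointness of the unperturbed operator. The only point deserving a line of justification is the stability of both properties under the choice of normalization (and the finitely many possibly non-simple eigenvalues), which the remark above settles; all the analytic content sits in Theorem~\ref{th:crit.bari} and in the Riesz basis theorems quoted.
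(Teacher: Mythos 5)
Your proposal is correct and follows essentially the same route as the paper, which obtains the corollary by combining the Riesz basis theorems of~\cite{SavShk14,LunMal14Dokl,LunMal16JMAA} with the Bari basis criterion of Theorem~\ref{th:crit.bari}. Your additional remark on stability of both properties under renormalization and under the choice of bases in the finitely many higher-dimensional root subspaces is a harmless elaboration of a point the paper leaves implicit.
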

\section{Definitions and formulations of the main results}
Let us recall the following abstract criterion for Bari basis property.
\begin{proposition}~\cite[Theorem VI.3.2]{GohKre65} \label{prop:crit.bari}
A complete system $\fF = \{f_n\}_{n \in \bZ}$ of unit vectors in a separable Hilbert space $\fH$ forms a Bari basis if and only if there exists a sequence $\{g_n\}_{n \in \bZ}$ biorthogonal to $\fF$ that is quadratically close to $\fF$:
\begin{equation} \label{eq:sum.fn-gn.2}
\sum_{n \in \bZ} \|f_n - g_n\|_{\fH}^2 < \infty, \qquad (f_n, g_m)_{\fH} = \delta_{nm}, \quad n,m \in \bZ.
\end{equation}
\end{proposition}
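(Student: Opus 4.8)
Proposition~\ref{prop:crit.bari} is a purely Hilbert-space statement; the plan is to prove both implications by operator-theoretic arguments, the only non-routine point being a symmetrization device in the sufficiency part. For the necessity, suppose $\fF=\{f_n\}$ is a Bari basis, so $\sum_n\|f_n-e_n\|_{\fH}^2<\infty$ for some orthonormal basis $\{e_n\}$. I would introduce the operator $T$ with $Te_n=f_n$; writing $T=I+K$, $Ke_n=f_n-e_n$, the identity $\|K\|_{HS}^2=\sum_n\|f_n-e_n\|^2<\infty$ shows $K$ is Hilbert--Schmidt, so $T=I+K$ is Fredholm of index $0$, and since $\range T\supseteq\ol{\Span\{f_n\}}=\fH$ by completeness of $\fF$, $T$ is boundedly invertible; thus $\{f_n\}$ is a Riesz basis with unique biorthogonal system $g_n=(T^*)^{-1}e_n$. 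Then $f_n-g_n=\bigl(T-(T^*)^{-1}\bigr)e_n$, and multiplying $T-(T^*)^{-1}$ on the right by $T^*$ gives $TT^*-I=K+K^*+KK^*$, a Hilbert--Schmidt operator; hence $T-(T^*)^{-1}=(TT^*-I)(T^*)^{-1}$ is Hilbert--Schmidt too and $\sum_n\|f_n-g_n\|^2=\|T-(T^*)^{-1}\|_{HS}^2<\infty$, as necessity requires.

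For the sufficiency, suppose $\fF=\{f_n\}$ is complete with $\|f_n\|=1$, and $\{g_n\}$ is biorthogonal to $\fF$ with $\delta^2:=\sum_n\|f_n-g_n\|^2<\infty$; put $d_n:=f_n-g_n$. The obstacle is that the hypotheses furnish no a priori frame bound for $\{f_n\}$, so it cannot be called a Riesz basis straight away. To produce one, I would pass to the symmetrized system $h_n:=\tfrac12(f_n+g_n)$: using biorthogonality one computes $(h_n,h_m)=\delta_{nm}+\tfrac14(d_n,d_m)$, i.e.\ the Gram matrix of $\{h_n\}$ equals $I+M$ with $M:=\bigl(\tfrac14(d_n,d_m)\bigr)_{n,m}$. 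The key point is that $M$ is itself the Gram matrix of $\{\tfrac12 d_n\}$, hence $M\ge0$, while $\|M\|_{HS}^2\le\tfrac1{16}\bigl(\sum_n\|d_n\|^2\bigr)^2<\infty$; so $I+M$ is bounded on $\ell^2$ and $\{h_n\}$ is a Bessel sequence. Since $f_n=h_n+\tfrac12 d_n$ and $\{d_n\}$ is norm-square-summable (so $(c_n)\mapsto\sum_n c_n d_n$ is bounded $\ell^2\to\fH$), $\{f_n\}$ is Bessel, and therefore so is $\{g_n\}$. Now the standard argument for biorthogonal Bessel systems applies: the analysis operator of $\{g_n\}$ composed with the synthesis operator of $\{f_n\}$ is the identity on $\ell^2$, so the synthesis operator of $\{f_n\}$ is bounded below with dense range (completeness), hence invertible; thus $\{f_n\}$ is a Riesz basis, $f_n=Te_n$ with $T$ boundedly invertible and $\{e_n\}$ orthonormal.

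It remains to upgrade ``Riesz basis'' to ``Bari basis''. With $g_n=(T^*)^{-1}e_n$, the hypothesis $\sum_n\|f_n-g_n\|^2<\infty$ says $D:=T-(T^*)^{-1}$ is Hilbert--Schmidt, so $TT^*-I=DT^*$ is Hilbert--Schmidt; since $A:=TT^*$ is bounded and bounded below (as $T$ is invertible), the identity $A^{1/2}-I=(A-I)\bigl(A^{1/2}+I\bigr)^{-1}$ shows $A^{1/2}-I$ is Hilbert--Schmidt as well. Taking the left polar decomposition $T=A^{1/2}U$ with $U$ unitary gives $f_n=Te_n=Ue_n+\bigl(A^{1/2}-I\bigr)Ue_n$, so $\{f_n\}$ is quadratically close to the orthonormal basis $\{Ue_n\}$, i.e.\ a Bari basis. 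I expect the symmetrization step — passing to $\{h_n\}$, whose Gram perturbation is automatically a \emph{positive} Hilbert--Schmidt matrix, hence bounded — to be the one genuinely clever ingredient; everything else is routine bookkeeping with Hilbert--Schmidt perturbations of the identity, the polar decomposition, and the square-root identity above.
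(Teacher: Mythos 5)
The paper offers no proof of Proposition~\ref{prop:crit.bari} at all: it is quoted verbatim from the classical monograph \cite{GohKre65} (Theorem~VI.3.2), so there is no in-paper argument to compare yours against; what matters is whether your blind proof stands on its own, and it does. Your necessity argument is standard and sound: $K e_n = f_n - e_n$ is Hilbert--Schmidt, $T = I+K$ is Fredholm of index zero, completeness of $\fF$ together with closedness of $\range T$ gives invertibility, and the factorization $T-(T^*)^{-1}=(TT^*-I)(T^*)^{-1}$ with $TT^*-I=K+K^*+KK^*$ Hilbert--Schmidt yields the quadratic closeness of $\{(T^*)^{-1}e_n\}$ to $\{f_n\}$. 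In the sufficiency part you correctly identify the real obstacle --- no a priori Bessel bound for $\{f_n\}$ --- and your symmetrization $h_n=\tfrac12(f_n+g_n)$ resolves it: by biorthogonality the Gram matrix of $\{h_n\}$ is exactly $I+M$ with $M$ the Gram matrix of $\{\tfrac12(f_n-g_n)\}$, which is Hilbert--Schmidt (hence bounded), so $\{h_n\}$, and then $\{f_n\}$, $\{g_n\}$, are Bessel; the identity $A_g S_f = I$ bounds the synthesis operator of $\{f_n\}$ below, completeness gives surjectivity, so $\{f_n\}$ is a Riesz basis; and the polar decomposition $T=(TT^*)^{1/2}U$ together with $(TT^*)^{1/2}-I=(TT^*-I)\bigl((TT^*)^{1/2}+I\bigr)^{-1}$ upgrades this to quadratic closeness to the orthonormal basis $\{Ue_n\}$. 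Two points deserve one explicit line each in a final write-up, though neither is a gap: in the necessity part the inclusion $\range T \supseteq \ol{\Span\{f_n\}}$ uses that a Fredholm operator has closed range, and in the sufficiency part you should note that a complete minimal system has a unique biorthogonal sequence, so the given $\{g_n\}$ must coincide with $\{(T^*)^{-1}e_n\}$ once the Riesz basis property is established. (Incidentally, the unit-norm hypothesis is never needed in your argument; it is part of the classical formulation only.)
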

Based on this abstract criterion we will introduce a generalization of Bari basis concept. Let $p \in [1,2]$ and $p' = p/(p-1) \in [2,\infty]$. It is well-known that for the dual space of $\ell^p := \ell^p(\bZ)$ we have,
\begin{equation} \label{eq:ellp*}
(\ell^p(\bZ))^* \cong \ell^{p'}(\bZ), \quad p \in (1,2],
\qquad\text{and}\qquad
(\ell^1(\bZ))^* \cong c_0(\bZ).
\end{equation}
For simplicity we identify $(\ell^p(\bZ))^*$ with $\ell^{p'}(\bZ)$ for $p \in (1,2]$ and with $c_0(\bZ)$ for $p=1$, respectively. E.g. $\{a_n\}_{n \in \bZ} \in (\ell^p(\bZ))^*$ for $p>1$ means that $\sum_{n \in \bZ} |a_n|^{p'} < \infty$.
With this in mind, we can extend Definition~\ref{def:bases}(ii) using equivalence from Proposition~\ref{prop:crit.bari} to more general concept of closeness of sequences $\{f_n\}_{n \in \bZ}$ and $\{g_n\}_{n \in \bZ}$.
\begin{definition} \label{def:bari.c0}
Let $p \in [1,2]$, let $\fF := \{f_n\}_{n \in \bZ}$ be a complete minimal sequence of unit vectors in a separable Hilbert space $\fH$ and let $\fG := \{g_n\}_{n \in \bZ}$ be its (unique) biorthogonal sequence: $(f_n, g_m)_{\fH} = \delta_{nm}$, $n, m \in \bZ$. A sequence $\fF$ is called a \textbf{Bari $(\ell^p)^*$-sequence} if it is ``($\ell^p)^*$-close'' to its biorthogonal sequence $\fG$, i.e. $\curl{\|f_n - g_n\|_{\fH}}_{n \in \bZ} \in (\ell^p)^*$. In view of~\eqref{eq:ellp*} it means that
\begin{equation} \label{eq:sum.fn-gn}
\sum_{n \in \bZ} \|f_n - g_n\|_{\fH}^{p'} < \infty \quad\text{if}\quad
p \in (1,2],
\quad\text{and}\quad
 \lim_{n \to \infty} \|f_n - g_n\|_{\fH} = 0
 \quad\text{if}\quad p = 1.
\end{equation}
For brevity we will call \emph{Bari $(\ell^1)^*$-sequence} as \textbf{Bari $c_0$-sequence} and \emph{Bari $(\ell^p)^*$-sequence} as \textbf{Bari $\ell^{p'}$-sequence} for $p \in (1,2]$.

\end{definition}
Proposition~\ref{prop:crit.bari} implies that the notion of Bari $\ell^2$-sequence coincides with the notion of Bari basis. Note also that every Bari $(\ell^p)^*$-sequence is Bari $c_0$-sequence. We specifically chose the word ``sequence'' because it is not clear if Bari $c_0$-sequence is a Riesz basis or even a regular basis in general case.
\begin{remark} \label{rem:c0.bari.diff}
Note that Bari $c_0$-property from definition~\ref{def:bari.c0} is not equivalent to more conventional formulation of $c_0$-closeness of $\{f_n\}_{n \in \bZ}$ to a certain orthonormal basis $\{e_n\}_{n \in \bZ}$ even if $\{f_n\}_{n \in \bZ}$ is already a Riesz basis. Indeed, in this case $f_n = e_n + K e_n$, where $K$ and $(I+K)^{-1}$ are bounded operators in $\fH$. Hence $\|f_n - e_n\|_{\fH} = \|K e_n\|_{\fH}$. It is easily seen that $g_n = \((I+K)^{-1}\)^* e_n = e_n - \((I+K)^{-1}\)^* K^* e_n$, and hence $\|g_n - e_n\| \to 0$ as $n \to \infty$ is equivalent to $\|K^* e_n\| \to 0$ as $n \to \infty$. If $K$ is not compact then $\lim_{n \to \infty}\|K e_n\| = 0$ is generally not equivalent to $\lim_{n \to \infty}\|K^* e_n\| = 0$ for a given orthonormal basis $\{e_n\}_{n \in \bZ}$.
\end{remark}
Let us also recall the notion of the system of root vectors of an operator with compact resolvent. Firt, we recall a few basic facts regarding the eigenvalues of a
compact, linear operator  $T \in \cB_{\infty}(\fH)$ in a
separable complex Hilbert space $\fH$. The {\it geometric
multiplicity}, $m_g(\lambda_0,T)$, of an eigenvalue $\lambda_0
\in \sigma_p (T)$ of $T$ is given by
$
m_g(\lambda_0,T) := \dim(\ker(T - \lambda_0)).
$

The {\it root subspace} of $T$ corresponding to $\lambda_0 \in
\sigma_p(T)$ is given by
\begin{equation}\label{root.subspace}
\cR_{\lambda_0}(T) = \big\{f\in\fH\,:\, (T - \lambda_0)^k f = 0 \
\ \text{for some}\ \ k\in\mathbb N \big\}.
\end{equation}
Elements of $\cR_{\l_0}(T)$ are called {\it root vectors}.
For $\lambda_0 \in \sigma_p (T) \backslash \{0\}$, the set
$\mathcal{R}_{\lambda_0}(T)$ is a closed linear subspace of
$\fH$ whose dimension equals to the {\it algebraic
multiplicity}, $m_a(\lambda_0,T)$, of $\lambda_0$,
$
m_a(\lambda_0,T) := \dim\big(\mathcal R_{\lambda_0}(T)\big)<\infty.
$

Denote by $\{\l_j\}_{j=1}^{\infty}$ the sequence of non-zero
eigenvalues of $T$ and let $n_j$ be the algebraic multiplicity
of $\l_j$. By the {\it system of root vectors} of the operator
$T$ we mean any sequence of the form
$
\cup_{j=1}^{\infty}\{e_{jk}\}_{k=1}^{n_j},
$
where $\{e_{jk}\}_{k=1}^{n_j}$ is a basis in $\cR_{\l_j}(T)$,
$n_j = m_a(\lambda_j,T) < \infty$. The system or root vectors of the operator $T$ is called \emph{normalized} if $\|e_{jk}\|_{\fH} = 1$, $j \in \bN$, $k \in \{1, \ldots, n_j\}$.

We are particularly interested in the case where $A$ is a
densely defined, closed, linear operator in $\fH$ whose
resolvent is compact, that is,
$
R_A(\l):=(A - \l)^{-1} \in \cB_{\infty}(\fH), \ \l \in \rho (A).
$
Via the spectral mapping theorem all eigenvalues of $A$
correspond to eigenvalues of its resolvent $R_A(\l)$, $\l \in
\rho (A)$, and vice versa. Hence, we use the same notions of
root vectors, root subspaces, geometric and algebraic
multiplicities associated with the eigenvalues of $A$, and the
system of root vectors of $A$.

Now we are ready to formulate the main result of this paper, which involve notions of Bari $(\ell^p)^*$-sequences and $c_0$-sequences from Definition~\ref{def:bari.c0} above.
\begin{theorem} \label{th:crit.lp.bari}
Let boundary conditions~\eqref{eq:cond.canon.intro} be strictly regular and let $Q \in \LL{p}$ for some $p \in [1,2]$. Then some normalized system of root vectors of the operator $L_U(Q)$ is a Bari $(\ell^p)^*$-sequence in $\LLV{2}$ if and only if the operator $L_U(0)$ is self-adjoint, i.e. when relations~\eqref{eq:abcd.sa.intro} hold for the coefficients $a,b,c,d$ in boundary conditions~\eqref{eq:cond.canon.intro}. In this case every normalized system of root vectors of the operator $L_U(Q)$ is a Bari $(\ell^p)^*$-sequence in $\LLV{2}$.
\end{theorem}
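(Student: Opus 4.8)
The plan is to combine the abstract Bari criterion (Proposition~\ref{prop:crit.bari} and Definition~\ref{def:bari.c0}) with the perturbation bounds of Theorem~\ref{th:ellp-close}, thereby reducing the assertion to an explicit computation for the unperturbed operator $L_U(0)$. \emph{Reformulation and basic facts.} Since $\|f_n\|_2=1$ and $(f_n,g_n)_2=1$, one has $\|f_n-g_n\|_2^2=\|f_n\|_2^2-2\Re(f_n,g_n)_2+\|g_n\|_2^2=\|g_n\|_2^2-1$, so the claim is equivalent to $\{(\|g_n\|_2^2-1)^{1/2}\}_{n\in\bZ}\in(\ell^p)^*$. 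For $|n|>n_0$ the eigenvalue $\l_n$ is simple, the root vectors are eigenfunctions, $\overline{\l_n}$ is a simple eigenvalue of $L_U(Q)^{*}=L_{U^{\star}}(Q^{*})$ with normalized eigenfunction $\psi_n$, and (since eigenfunctions of $L_U(Q)$ and $L_U(Q)^{*}$ for non-conjugate eigenvalues are orthogonal) $g_n=\psi_n/\overline{(f_n,\psi_n)_2}$, whence $\|g_n\|_2=|(f_n,\psi_n)_2|^{-1}$. By the Riesz basis property of strictly regular BVPs (\cite{SavShk14,LunMal16JMAA}) the system $\{f_n\}_{n\in\bZ}$ is complete and minimal and $\{\|g_n\|_2\}_{n\in\bZ}$ is bounded, so $|(f_n,\psi_n)_2|\ge\delta>0$. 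Finitely many exceptional indices never affect membership in $(\ell^p)^*$, and $\|f_n-g_n\|_2$ is unchanged under rephasing the normalized root vectors, so the words ``some'' and ``every'' in the theorem are interchangeable.

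\emph{Reduction to $Q=0$.} The adjoint boundary conditions $U^{\star}$, read off from the vanishing of the boundary form $[y^{\top}B^{-1}\overline z]_0^1$ on $\dom(L_U(0))$, are again strictly regular, and $L_U(0)=L_U(0)^{*}$ iff $U^{\star}$ is equivalent to $U$; a short linear-algebra computation of $U^{\star}$ shows the latter is equivalent to~\eqref{eq:abcd.sa.intro}. Applying Theorem~\ref{th:ellp-close} to the pairs $(L_U(Q),L_U(0))$ and $(L_{U^{\star}}(Q^{*}),L_{U^{\star}}(0))$ and using $\|\cdot\|_2\le\|\cdot\|_{\infty}$ on $[0,1]$, we get $\{\|f_n-f^0_n\|_2\}_{n\in\bZ},\ \{\|\psi_n-\psi^0_n\|_2\}_{n\in\bZ}\in(\ell^p)^*$, where $f^0_n,\psi^0_n,g^0_n$ denote the corresponding objects for $L_U(0)$. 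Since $(\ell^p)^*$ ($=\ell^{p'}$ or $c_0$) is a solid Banach sequence space stable under multiplication by bounded sequences, and $|(f_n,\psi_n)_2|,|(f^0_n,\psi^0_n)_2|\ge\delta$, it follows successively that $\{|(f_n,\psi_n)_2-(f^0_n,\psi^0_n)_2|\}_{n\in\bZ}$ and $\{\|g_n-g^0_n\|_2\}_{n\in\bZ}$ lie in $(\ell^p)^*$, and then, via $\bigl|\|f_n-g_n\|_2-\|f^0_n-g^0_n\|_2\bigr|\le\|f_n-f^0_n\|_2+\|g_n-g^0_n\|_2$, so does $\bigl\{\bigl|\|f_n-g_n\|_2-\|f^0_n-g^0_n\|_2\bigr|\bigr\}_{n\in\bZ}$. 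Therefore $\{\|f_n-g_n\|_2\}_{n\in\bZ}\in(\ell^p)^*$ if and only if $\{\|f^0_n-g^0_n\|_2\}_{n\in\bZ}\in(\ell^p)^*$.

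\emph{The unperturbed operator and the dichotomy.} Every eigenfunction of $L_U(0)$ for $\l$ has the form $f(x)=(c_1e^{ib_1\l x},c_2e^{ib_2\l x})$ and every eigenfunction of $L_U(0)^{*}$ for $\overline\l$ the form $h(x)=(\wt c_1e^{ib_1\overline\l x},\wt c_2e^{ib_2\overline\l x})$, with $(c_1,c_2)$ and $(\wt c_1,\wt c_2)$ fixed up to scalars by $U$ and $U^{\star}$. A direct computation gives $f(x)^{\top}\overline{h(x)}\equiv c_1\overline{\wt c_1}+c_2\overline{\wt c_2}$, independent of $x$; together with the known description of the unperturbed spectrum (for $|n|$ large the $\l^0_n$ lie on finitely many arithmetic progressions $\l^{(j)}_{*}+2\pi n$, along each of which $\Im\l^0_n$ and the coefficient vectors are constant) this shows that $\|f^0_n\|_2$, $\|h^0_n\|_2$, $(f^0_n,h^0_n)_2$, hence $\|g^0_n\|_2$ and $\|f^0_n-g^0_n\|_2=(\|g^0_n\|_2^2-1)^{1/2}$, are eventually constant along each of the finitely many branches. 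Thus $\|f^0_n-g^0_n\|_2\to0$ iff this quantity vanishes for all large $n$, iff on every branch Cauchy--Schwarz for $(f^{(j)},h^{(j)})_2$ is an equality, iff $f^{(j)}\parallel h^{(j)}$; since $e^{ib_1\l x}$ and $e^{ib_1\overline\l x}$ are proportional as functions of $x$ only when $\l\in\bR$, parallelism on every branch forces all $\l^0_n$ to be real and every eigenfunction of $L_U(0)$ to satisfy $U^{\star}$ too, whence (as $\{f^0_n\}$ is a basis) $\dom(L_U(0))\subseteq\dom(L_U(0)^{*})$ with matching action and $L_U(0)=L_U(0)^{*}$, i.e.~\eqref{eq:abcd.sa.intro} holds. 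Conversely,~\eqref{eq:abcd.sa.intro} makes the normalized $\{f^0_n\}$ orthonormal, so $g^0_n=f^0_n$ and $\|f^0_n-g^0_n\|_2\equiv0$. Hence: if~\eqref{eq:abcd.sa.intro} holds then $\{\|f^0_n-g^0_n\|_2\}_{n\in\bZ}$ has finite support and lies in every $(\ell^p)^*$, so by the reduction step $\{\|f_n-g_n\|_2\}_{n\in\bZ}\in(\ell^p)^*$ and every normalized root system of $L_U(Q)$ is a Bari $(\ell^p)^*$-sequence; if~\eqref{eq:abcd.sa.intro} fails then $\limsup_n\|f^0_n-g^0_n\|_2>0$, so $\{\|f^0_n-g^0_n\|_2\}_{n\in\bZ}\notin c_0\supseteq(\ell^p)^*$, and by the reduction step neither is $\{\|f_n-g_n\|_2\}_{n\in\bZ}$, so no normalized root system is a Bari $(\ell^p)^*$-sequence.

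\emph{Main obstacle.} The crux is the analysis of the unperturbed operator $L_U(0)$: establishing sharp asymptotics of its eigenfunctions (and, if incommensurable ratios $b_1/b_2$ occur among strictly regular problems, handling the resulting quasi-periodic rather than periodic eigenvalue patterns), together with the chain of algebraic equivalences ending in~\eqref{eq:abcd.sa.intro}. A subsidiary technical point is the uniform-in-$n$ identification of the biorthogonal vectors $g_n$ with normalized adjoint eigenfunctions and the harmless bookkeeping for the finitely many root vectors attached to non-simple eigenvalues.
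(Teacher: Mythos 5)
Your reduction to the unperturbed operator is essentially the paper's own route: apply Theorem~\ref{th:ellp-close} to the pairs $(L_U(Q),L_U(0))$ and $(L_{U^*}(Q^*),L_{U^*}(0))$, pass from the approximating adjoint eigenfunctions to the exact biorthogonal system via $|(f,g)-1|\le\|f-g\|$ and the bound $\|g_n\|\asymp 1$ coming from the Riesz basis property, and use asymptotic simplicity to make ``some'' and ``every'' interchangeable. That part is sound and matches Section 6 of the paper (Corollary~\ref{cor:every.SRV} and the proof of Theorem~\ref{th:crit.lp.bari}).

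The genuine gap is in your analysis of $L_U(0)$. You assert that for $|n|$ large the eigenvalues $\l_n^0$ lie on finitely many arithmetic progressions along which $\Im\l_n^0$ and the coefficient vectors are eventually constant, and you then conclude that $\|f_n^0-g_n^0\|_2\to 0$ forces $\|f_n^0-g_n^0\|_2=0$ for all large $n$. This structural assumption is false precisely in the hardest admissible case: when $b_2/b_1\notin\bQ$, $bc\ne 0$ and $|a|+|d|>0$, the zeros of $\Delta_0(\l)=d+ae^{i(b_1+b_2)\l}+(ad-bc)e^{ib_1\l}+e^{ib_2\l}$ admit no such description; $\Im\l_n^0$ and the quantities $z_n=(1+de^{-ib_2\l_n^0})\overline{(1+ae^{ib_1\l_n^0})}$ vary quasi-periodically, so eventual constancy fails and ``$\to 0$'' does not upgrade to ``$=0$ eventually''. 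In that regime the correct statement is only the limit criterion of Proposition~\ref{prop:crit.bari.b.ne.0} ($|c|=\beta|b|$, $\Im\l_n^0\to 0$, $z_n\to|bc|$), and deducing the algebraic relations \eqref{eq:abcd.sa.intro} from these limits is the crux of the paper: one needs Weyl's equidistribution theorem to show that $\{e^{ib_1\l_n^0}\}_{n\in\bZ}$ has infinitely many limit points (Proposition~\ref{prop:nlim.inf}), so that the asymptotic linear relation $(|bc|u+bc)e^{ib_1\l_n^0}+|bc|d+bc\ol{a}\to 0$ forces its coefficients to vanish (Lemma~\ref{lem:lim.Imln}), after which Theorem~\ref{th:crit.c0.bari} completes the case analysis. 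Your proposal flags the incommensurable case only as a side remark and supplies no argument for it, while your main dichotomy argument actually presupposes the rational-ratio (or $bc=0$, or $a=d=0$) structure; so the ``only if'' direction of the theorem is unproved exactly where the real difficulty lies. A secondary, repairable point: even in the rational case, passing from ``every eigenfunction of $L_U(0)$ satisfies the adjoint conditions'' to $L_U(0)=L_U(0)^*$ needs a short argument that the boundary vectors of the eigenfunctions span $\ker U$ (or, as in the paper, a direct verification that the normalized $f_n^0$ and $g_n^0$ coincide for all $n$).
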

As an immediate consequence of Theorem~\ref{th:crit.lp.bari} we get Theorem~\ref{th:crit.bari}: the criterion of Bari basis property for Dirac-type operator $L_U(Q)$ with $L^2$-potential and strictly regular boundary conditions.

Let us briefly comment on the proof of our main result, Theorem~\ref{th:crit.lp.bari}.
First, we apply Theorem~\ref{th:ellp-close} to reduce the Bari $(\ell^p)^*$-property of the system of root vectors of operator $L_U(Q)$ with strictly regular boundary conditions to a certain explicit condition in terms of the eigenvalues $\{\l_n^0\}_{n \in \bZ}$ of the operator $L_U(0)$, which reads as follows for the case $p=1$.
\begin{proposition} \label{prop:c0.close.cond}
Let $Q \in \LL{1}$ and boundary conditions~\eqref{eq:cond} be strictly regular. Then some normalized systems of root vectors $\{f_n\}_{n \in \bZ}$ of the operator $L_U(Q)$ is a Bari $c_0$-sequence in $\LLV{2}$ if and only if:
\begin{equation} \label{eq:lim1.lim2.intro}
 b_1 |c| + b_2 |b| = 0, \qquad \lim_{n \to \infty} \Im \l_n^0 = 0
 \quad\text{and}\quad \lim_{n \to \infty} z_n = |bc|,
\end{equation}
where
\begin{equation} \label{eq:zn.def.intro}
 z_n := \(1 + d \exp(- i b_2 \l_n^0)\)\ol{\(1 + a \exp(i b_1 \l_n^0)\)},
\end{equation}
and $\{\l_n^0\}_{n \in \bZ}$ is the sequence of the eigenvalues of the operator $L_U(0)$, counting multiplicity.
\end{proposition}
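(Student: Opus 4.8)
The plan is to strip off the potential, reducing the statement to the explicitly solvable operator $L_U(0)$, and then to read the three limit relations off a computation with exponential solutions. Since $B$ is real, $\cL$ is formally self-adjoint, so $L_U(Q)^* = L_{U^*}(Q^*)$, where the adjoint boundary conditions $U^*$ (computed from the boundary form) are again of the form~\eqref{eq:cond}, are strictly regular, and have unperturbed eigenvalues $\{\ol{\l_n^0}\}$. Because the conditions are strictly regular, $\{f_n\}$ is complete and minimal, so the biorthogonal sequence $\{g_n\}$ exists; for $|n|$ large $\l_n$ is simple, hence $g_n$ is proportional to the normalized root vector $h_n$ of $L_U(Q)^*$ with eigenvalue $\ol{\l_n}$, and since $(f_n,g_n)=1$ one has $\|f_n-g_n\|_2^2 = \|g_n\|_2^2-1 = |(f_n,h_n)|^{-2}-1$. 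I would then apply Theorem~\ref{th:ellp-close} with $p=1$ to the compact sets $\{Q,0\}$ and $\{Q^*,0\}$ in $\LL1$, obtaining normalized systems with $\|f_n-f_n^0\|_\infty\to0$ and $\|h_n-h_n^0\|_\infty\to0$, where $f_n^0$ and $h_n^0$ are normalized root vectors of $L_U(0)$ and $L_U(0)^*$ with eigenvalues $\l_n^0$ and $\ol{\l_n^0}$. Since $\|\cdot\|_2\le\|\cdot\|_\infty$ on $[0,1]$, this gives $(f_n,h_n)-(f_n^0,h_n^0)\to0$, and hence $\{f_n\}$ is a Bari $c_0$-sequence if and only if $|(f_n^0,h_n^0)|\to1$.

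Next I would compute the unperturbed data. Solving $\cL y=\mu y$ with $\mu=\l_n^0$ subject to~\eqref{eq:cond.canon.intro} gives, up to normalization, the eigenfunction $\phi_n = \col(b\,e^{ib_1\mu x}, -R_n\,e^{ib_2\mu x})$ of $L_U(0)$, where $R_n := 1+a\,e^{ib_1\mu}$, while $U^*$ gives the eigenfunction $\psi_n = \col(\ol{P_n}\,e^{ib_1\ol\mu x}, -\beta\,\ol b\,e^{ib_2\ol\mu x})$ of $L_U(0)^*$ (with eigenvalue $\ol\mu$), where $P_n := 1+d\,e^{-ib_2\mu}$ and $\beta = -b_2/b_1$. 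The key point is that the integrands in $(\phi_n,\psi_n)$ are constant in $x$, namely $\phi_{n,1}\ol{\psi_{n,1}}=bP_n$ and $\phi_{n,2}\ol{\psi_{n,2}}=\beta b R_n$, so that
\begin{equation*}
(\phi_n,\psi_n) = b(P_n+\beta R_n), \qquad
\|\phi_n\|_2^2 = |b|^2 I_1(\tau_n) + |R_n|^2 I_2(\tau_n), \qquad
\|\psi_n\|_2^2 = |P_n|^2 I_1(-\tau_n) + \beta^2|b|^2 I_2(-\tau_n),
\end{equation*}
where $\tau_n := \Im\l_n^0$ and $I_j(t) := \int_0^1 e^{-2b_jtx}\,dx$ (so $I_j(0)=1$). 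Moreover, the characteristic equation takes the form $R_nP_n = bc\,e^{i(b_1-b_2)\mu}$, so $z_n = P_n\ol{R_n}$ from~\eqref{eq:zn.def.intro} satisfies $|z_n| = |R_n||P_n| = |bc|\,e^{(b_2-b_1)\tau_n}$; since $\sup_n|\tau_n|<\infty$ for strictly regular conditions, $|R_n|$ and $|P_n|$ are bounded above, and, when $bc\ne0$, also bounded below, so $|z_n|$ is bounded away from $0$.

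For the passage to the limit, $|(f_n^0,h_n^0)| = |(\phi_n,\psi_n)|/(\|\phi_n\|_2\|\psi_n\|_2) \le 1$ by Cauchy--Schwarz, with equality in the limit exactly when $\|\phi_n\|_2^2\|\psi_n\|_2^2 - |(\phi_n,\psi_n)|^2 \to 0$. Expanding this difference (and using $|R_n|^2|P_n|^2 = |z_n|^2$) yields a sum $T_1+T_2+T_3+T_4$ of nonnegative terms, with $T_1 = |b|^2|P_n|^2(I_1(\tau_n)I_1(-\tau_n)-1)$, $T_2 = \beta^2|b|^2|R_n|^2(I_2(\tau_n)I_2(-\tau_n)-1)$, $T_3 = \beta^2|b|^4 I_1(\tau_n)I_2(-\tau_n) + |z_n|^2 I_2(\tau_n)I_1(-\tau_n) - 2\beta|b|^2|z_n|$ and $T_4 = 2\beta|b|^2(|z_n|-\Re z_n)$: here $T_1,T_2\ge0$ by the identity $I_j(t)I_j(-t) = 2(\cosh(2b_jt)-1)/(2b_jt)^2 \ge 1$ (with equality only at $t=0$), $T_3\ge0$ by this together with $I_1(\tau_n)I_1(-\tau_n)I_2(\tau_n)I_2(-\tau_n)\ge1$ and the arithmetic--geometric mean inequality, and $T_4\ge0$ trivially. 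Hence $|(f_n^0,h_n^0)|\to1$ if and only if $T_1,T_2,T_3,T_4$ all tend to $0$. When $bc\ne0$ these decouple into the three relations of~\eqref{eq:lim1.lim2.intro}: $T_1\to0$ forces $I_1(\tau_n)I_1(-\tau_n)\to1$, i.e.\ $\tau_n\to0$ (which also gives $T_2\to0$); given that, all the factors $I_j(\pm\tau_n)\to1$, so $T_3\to0$ reduces to $(\beta|b|^2-|z_n|)^2\to0$, which together with $|z_n|\to|bc|$ is equivalent to $|bc|=\beta|b|^2$, i.e.\ $b_1|c|+b_2|b|=0$; and finally, $|z_n|$ then converging to the positive number $|bc|$, the condition $T_4\to0$ becomes $z_n/|z_n|\to1$, i.e.\ $z_n\to|bc|$. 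The degenerate case $bc=0$ --- which under strict regularity forces essentially separated conditions, with $z_n\equiv0$ and~\eqref{eq:lim1.lim2.intro} reducing to $b=c=0$ together with $|a|=|d|=1$ --- is settled by a direct variant of the same computation using the solutions $\col(e^{ib_1\mu x},0)$ and $\col(0,e^{ib_2\mu x})$.

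The main difficulty I anticipate is twofold. First, the bookkeeping in the reduction: threading the normalized systems $\{f_n\}$, $\{h_n\}$ and their unperturbed counterparts through Theorem~\ref{th:ellp-close} while keeping the biorthogonal pairing intact in the limit; it helps that only the moduli $|(f_n,h_n)|$ enter, so the unimodular ambiguities in the various normalizations are harmless. Second, in the unperturbed computation, spotting the nonnegative decomposition $T_1+T_2+T_3+T_4$ --- it is precisely this splitting that forces the three limits in~\eqref{eq:lim1.lim2.intro} to be mutually independent. The genuinely degenerate sub-case $bc=0$ also requires separate care, since the eigenfunction formula for $\phi_n$ collapses there.
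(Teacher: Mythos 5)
Your route is essentially the paper's own: the reduction to $L_U(0)$ via Theorem~\ref{th:ellp-close} together with the identity $\|f_n-g_n\|^2=|(f_n,h_n)|^{-2}-1$ is exactly the content of Corollary~\ref{cor:every.SRV} and Lemma~\ref{lem:crit.bari.not.norm}; your $\phi_n,\psi_n$ coincide with the eigenfunctions~\eqref{eq:fn0x.gn0x}; and your $T_1$, $T_2$, $T_3+T_4$ are the paper's $\tau_{1,n}$, $\tau_{2,n}$, $\tau_{3,n}$ from~\eqref{eq:tau1}--\eqref{eq:tau3}, with the same nonnegativity inputs (the identity $I_j(t)I_j(-t)\ge 1$ of Lemma~\ref{lem:ejx.ej.Ej} and the AM--GM step~\eqref{eq:tau3.estim}). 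Splitting $\tau_{3,n}$ further into $T_3+T_4$ is a pleasant cosmetic refinement that makes the decoupling of the last two conditions in~\eqref{eq:lim1.lim2.intro} more transparent.

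The one genuine flaw is your disposal of the case $bc=0$. Strict regularity does \emph{not} force separated conditions there: $b=0$ with $acd\ne 0$ (or $c=0$ with $abd\ne 0$) is an admissible strictly regular configuration (Remark~\ref{rem:cond.examples}, items 3a and 4a), the conditions are not separated, and on one of the two arithmetic progressions of eigenvalues the eigenfunctions of $L_U(0)$ (or of its adjoint) have \emph{both} components nonzero, so the basis $\col(e^{ib_1\mu x},0)$, $\col(0,e^{ib_2\mu x})$ you propose for ``the degenerate case'' is simply wrong unless $b=c=0$. The repair is already inside your own computation: the formula $\phi_n=\col(b\,e^{ib_1\mu x},-R_n e^{ib_2\mu x})$ needs only $b\ne 0$, not $bc\ne 0$; when $c=0$ one gets $z_n\equiv 0$, hence $T_3=\beta^2|b|^4 I_1(\tau_n)I_2(-\tau_n)$ is bounded away from zero (since $\sup_n|\tau_n|<\infty$), so the Bari $c_0$-property fails, consistently with $b_1|c|+b_2|b|=b_2|b|\ne 0$; the sub-case $b=0\ne c$ follows by exchanging the roles of the components (the paper's ``without loss of generality $b\ne 0$''), and only $b=c=0$ truly requires the separated-type eigenfunctions (Proposition~\ref{prop:crit.bari.period}). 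Note also that your converse step ``$T_1\to 0$ forces $\tau_n\to 0$'' relies on $|P_n|$ being bounded below, which is available only for $bc\ne 0$; the paper instead controls the sum, $T_1+T_2\asymp|\Im\l_n^0|^2$, via $|R_n|^2+|P_n|^2\asymp 1$ (Lemma~\ref{lem:ln0.exp.asymp}(ii)), which is valid for all strictly regular conditions and is the cleaner way to organize a uniform argument.
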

With condition~\eqref{eq:lim1.lim2.intro} established, the main difficulty arises in reducing this condition to the desired explicit condition~\eqref{eq:abcd.sa.intro}.
In this connection, recall that the sequence $\{\l_n^0\}_{n \in \bZ}$ of the eigenvalues of the operator $L_U(0)$ coincides with the sequence of zeros of characteristic determinant
\begin{equation} \label{eq:Delta0.intro}
 \Delta_0(\l) = d + a e^{i (b_1+b_2) \l} + (ad-bc) e^{i b_1 \l}
 + e^{i b_2 \l}.
\end{equation}
If $b_2 / b_1 \in \bQ$ then the sequence $\{\l_n^0\}_{n \in \bZ}$ has a simple explicit form: it is the union of arithmetic progression that lie on the lines parallel to the real axis, which simplifies the problem a lot.

The case $b_2 / b_1 \notin \bQ$ is much more complicated.
Namely, if $|a|+|d|>0$ and $bc \ne 0$ there is no explicit description of the spectrum of the operator $L_U(0)$. Nevertheless, we were able to establish equivalence of~\eqref{eq:lim1.lim2.intro} and~\eqref{eq:abcd.sa.intro} using Weyl's equidistribution theorem (see~\cite[Theorem 4.2.2.1]{SteinShak03}). It implies the following crucial property of zeros of $\Delta_0(\cdot)$.
\begin{proposition} \label{prop:nlim.inf.intro}
Let $b_2/b_1 \notin \bQ$ and boundary conditions~\eqref{eq:cond.canon.intro} be regular, i.e. $ad-bc \ne 0$. Let $\{\l_n^0\}_{n \in \bZ}$ be the sequence of zeros of the characteristic determinant $\Delta_0(\cdot)$ counting multiplicity. Then each of the sequences $\{\exp(i b_1 \l_n^0)\}_{n \in \bZ}$ and $\{\exp(i b_2 \l_n^0)\}_{n \in \bZ}$ has infinite set of limit points.
\end{proposition}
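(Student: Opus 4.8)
The plan is to analyze the zero set of
\[
 \Delta_0(\l) = d + a e^{i (b_1+b_2) \l} + (ad-bc) e^{i b_1 \l} + e^{i b_2 \l}
\]
by restricting attention to a horizontal strip and tracking the two independent ``phases'' $e^{ib_1\l}$ and $e^{ib_2\l}$ along it. Since $b_2/b_1\notin\bQ$, the key point is that as $\l$ ranges over a line $\Im\l = \tau$, the pair $(e^{ib_1\l}, e^{ib_2\l})$ winds around a torus with incommensurable frequencies, so by Weyl's equidistribution theorem its orbit is dense (equidistributed) in a product of two circles of radii $e^{-b_1\tau}$ and $e^{-b_2\tau}$. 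First I would recall (from the strictly regular / regular asymptotics, or directly from a Rouché argument on $\Delta_0$) that the zeros $\l_n^0$ lie in a fixed horizontal strip $|\Im\l|\le h$, with $\Re\l_n^0\to\pm\infty$; this confines $e^{-b_1\Im\l_n^0}$ and $e^{-b_2\Im\l_n^0}$ to compact subintervals of $(0,\infty)$, so by Bolzano--Weierstrass it suffices to produce, along some subsequence, a nontrivial spread of the \emph{arguments} of $e^{ib_1\l_n^0}$ (the statement for $b_2$ being symmetric, or following from $e^{ib_2\l}=e^{i(b_2-b_1)\l}\cdot e^{ib_1\l}$ together with a second application of equidistribution).

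The heart of the argument is the following mechanism. Fix a height $\tau$ that is a limit point of $\{\Im\l_n^0\}$ and set $w = e^{ib_1\l}$, $\zeta = e^{i(b_2-b_1)\l}$, so that $\Delta_0(\l) = d + a\,\zeta w^{2}\cdot(\text{correction}) + \dots$ — more precisely rewrite $\Delta_0$ as $F(w,\zeta) := d + a\,w\zeta\cdot w + (ad-bc)\,w + w\zeta$ with $|w| = e^{-b_1\tau} =: \rho_1$, $|\zeta| = e^{-(b_2-b_1)\tau} =: \rho_2$ on the relevant circle-product $C_1\times C_2$. By regularity $ad-bc\ne 0$, so $F(\cdot,\zeta_0)$ is for each fixed $\zeta_0$ a nonconstant polynomial in $w$ of degree $\le 2$; a standard Rouché/argument-principle count on the strip shows that the number of zeros of $\Delta_0$ in a box $\{|\Re\l - t|\le \pi/|b_1|,\ |\Im\l|\le h\}$ is bounded below and above by a positive constant. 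Hence there are genuinely many zeros, and — crucially — they cannot all collapse onto a single value of $\arg w$: if $\{e^{ib_1\l_n^0}\}$ had only finitely many limit points $w_1,\dots,w_m$, then every zero would be within $o(1)$ of the finite set $\bigcup_k \{\l : e^{ib_1\l}=w_k\}$, i.e. the zeros would asymptotically lie on finitely many horizontal lines $\Im\l = \tau_k$ and at abscissae in finitely many arithmetic progressions mod $2\pi/|b_1|$. I would then derive a contradiction by evaluating $\Delta_0$ along a horizontal line $\Im\l=\tau$ at a \emph{generic} real part (supplied by equidistribution of $(\arg w, \arg\zeta)$): equidistribution produces a sequence $\Re\l\to\infty$ along which $(e^{ib_1\l}, e^{ib_2\l})\to (w_*, \zeta_* w_*)$ for essentially any prescribed $(w_*,\zeta_*)\in C_1\times C_2$, in particular one can steer $\Delta_0(\l)$ arbitrarily close to $0$ at points whose $\arg w$ avoids the finite putative limit set, forcing (by a quantitative Rouché estimate, using $|\Delta_0'|$ bounded below away from multiple-zero configurations, which strict regularity guarantees for large $|n|$) an actual zero of $\Delta_0$ nearby with $\arg(e^{ib_1\l_n^0})$ outside any neighborhood of $\{w_1,\dots,w_m\}$ — contradiction.

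Concretely the steps are: (1) confine the zeros to a strip and extract a convergent subsequence of $\{\Im\l_n^0\}$ with limit $\tau$, reducing to the moduli being fixed; (2) set up the torus map $t\mapsto (e^{ib_1(t+i\tau)}, e^{ib_2(t+i\tau)})$ and invoke Weyl to get density/equidistribution of its image in $C_1\times C_2$; (3) solve $F(w,\zeta)=0$ on $C_1\times C_2$ to exhibit a continuum (indeed, for each $\zeta\in C_2$ at least one, hence a one-parameter family as $\zeta$ varies) of ``target'' points where $\Delta_0$ vanishes, with $\arg w$ filling a nondegenerate arc; (4) use equidistribution to find genuine zeros $\l_n^0$ with $e^{ib_1\l_n^0}$ approaching each such target — via a Rouché argument comparing $\Delta_0$ with its value at the target — and conclude the limit set of $\{e^{ib_1\l_n^0}\}$ contains that whole arc, in particular is infinite; (5) repeat verbatim with $b_1$ replaced by $b_2$ (the roles are symmetric since one may equally write $F$ in terms of $e^{ib_2\l}$ and $e^{i(b_1-b_2)\l}$). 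The main obstacle I anticipate is step (4): turning the \emph{qualitative} density from Weyl into the \emph{existence of actual zeros} near the targets requires a uniform quantitative Rouché bound, i.e. control from below on $|\Delta_0|$ on small circles around the target points that hold uniformly as $\Re\l\to\infty$; the degenerate case $bc=0$ or $|a|+|d|=0$ (where $\Delta_0$ factors and zeros are explicit) should be peeled off first and handled by inspection, and the non-degenerate case is exactly where $ad-bc\ne 0$ keeps $F(\cdot,\zeta)$ nonconstant and its zero $w(\zeta)$ moving, which is what makes the limit set an arc rather than a point.
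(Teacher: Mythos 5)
Your central step (3) does not hold, and it is where the whole mechanism breaks. For a \emph{fixed} height $\tau$, asking for zeros of the symbol $F(w,\zeta)=d+aw^2\zeta+(ad-bc)w+w\zeta$ with the moduli pinned to $|w|=e^{-b_1\tau}$, $|\zeta|=e^{-(b_2-b_1)\tau}$ is two real equations in the two real unknowns $\arg w,\arg\zeta$; generically (for instance when $a$, $d$ and $bc$ are all nonzero) the solution set is a finite set of points on the torus, not an arc. Your argument ``for each $\zeta\in C_2$ at least one root $w$, hence a one-parameter family'' conflates roots of the quadratic $F(\cdot,\zeta)$ in $\bC$ with roots lying on the prescribed circle $C_1$: the root $w(\zeta)$ has no reason to have modulus $e^{-b_1\tau}$. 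Consequently, after your step (1) (passing to a subsequence with $\Im\l_n^0\to\tau$), continuity confines the limit points of $(e^{ib_1\l_n^0},e^{ib_2\l_n^0})$ along that subsequence to the finitely many torus zeros of the symbol at height $\tau$, so one limiting height generically contributes only finitely many limit points; the infinitude must come from the joint behaviour of heights and phases over the whole sequence, which your scheme never addresses. Moreover, step (4) is exactly the hard analytic point and is left unproved: converting ``$|\Delta_0|$ small at a point with prescribed phase'' into an actual zero nearby requires a uniform lower bound for $|\Delta_0|$ on surrounding circles, and your proposed source for it (strict regularity giving $|\Delta_0'|$ bounded below) is not available, since the proposition assumes only regularity, $ad-bc\ne0$. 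Finally, the remark that the $b_2$-statement ``follows from $e^{ib_2\l}=e^{i(b_2-b_1)\l}e^{ib_1\l}$ and a second application of equidistribution'' is unsubstantiated; what is actually needed (and what the paper proves via the relation $e^{-ib_2\l_n^0}=-(1+ae^{ib_1\l_n^0})/(d+(ad-bc)e^{ib_1\l_n^0})$, legitimate because $1+ae^{ib_1\l_n^0}\asymp1$ and $d+(ad-bc)e^{ib_1\l_n^0}\asymp1$ when $bc\ne0$) is that \emph{finiteness} of the limit set for one exponential forces finiteness for the other.

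The paper argues in the opposite, counting direction, and that is the idea your proposal is missing: assume one of the two sequences has finitely many limit points; deduce (via the M\"obius relation above and elementary limit-point lemmas) that $\Im\l_n^0$, $\sin(b_1\Re\l_n^0)$ and $\sin(b_2\Re\l_n^0)$ all have finitely many limit points; then the fractional parts $\fr{(b_2/b_1)\,k_n}$, $k_n:=\floor{b_1\Re\l_n^0/(2\pi)}$, are eventually trapped in a union of intervals of arbitrarily small total length. Weyl's equidistribution theorem over the integers bounds the number of such $k_n$ with $|k_n|\le M$ by $C\eps M$, while the zero-counting asymptotics $\Re\l_n^0\sim 2\pi n/(b_2-b_1)$ together with incompressibility give a lower bound $\gamma M$, a contradiction for small $\eps$. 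Note that this route uses only the density and incompressibility of the zeros and never needs quantitative control of $\Delta_0$ near its zeros, which is precisely the control your Rouch\'e step lacks; the degenerate case $bc=0$ (explicit arithmetic progressions plus Kronecker's theorem) is handled separately, as you anticipated.
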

This result was key for proving equivalence of~\eqref{eq:lim1.lim2.intro} and~\eqref{eq:abcd.sa.intro}, which in turn implies our main result, Theorem~\ref{th:crit.lp.bari}, and its main corollary, Theorem~\ref{th:crit.bari}.
\section{Regular and strictly regular boundary conditions}
\label{subsec:regular}
In this section we recall known properties of BVP~\eqref{eq:system}--\eqref{eq:cond} subject to regular or strictly regular boundary conditions from~\cite{LunMal16JMAA}.
Let us set
\begin{align}
 A := \begin{pmatrix} a_{11} & a_{12} & a_{13} & a_{14} \\
 a_{21} & a_{22} & a_{23} & a_{24} \end{pmatrix}, \qquad
\label{eq:Ajk.Jjk}
 A_{jk} := \begin{pmatrix} a_{1j} & a_{1k} \\ a_{2j} & a_{2k} \end{pmatrix},
 \quad J_{jk} := \det (A_{jk}), \quad j,k\in\{1,\ldots,4\}.
\end{align}
Let
\begin{equation} \label{eq:Phi.def}
 \Phi(\cdot, \l) =
 \begin{pmatrix} \varphi_{11}(\cdot, \l) & \varphi_{12}(\cdot, \l)\\
 \varphi_{21}(\cdot,\l) & \varphi_{22}(\cdot,\l)
 \end{pmatrix} =: \begin{pmatrix} \Phi_1(\cdot, \l) & \Phi_2(\cdot, \l)
 \end{pmatrix}, \qquad \Phi(0, \l) = I_2,
\end{equation}
be a fundamental matrix solution of the
system~\eqref{eq:system}, where $I_2 = \begin{psmallmatrix} 1 & 0 \\ 0 & 1 \end{psmallmatrix}$. Here $\Phi_k(\cdot, \l)$ is the
$k$th column of $\Phi(\cdot, \l)$.

The eigenvalues of the problem~\eqref{eq:system}--\eqref{eq:cond} counting multiplicity
are the zeros (counting multiplicity) of the characteristic determinant
\begin{equation} \label{eq:Delta.def}
 \Delta_Q(\l) := \det
 \begin{pmatrix}
 U_1(\Phi_1(\cdot,\l)) & U_1(\Phi_2(\cdot,\l)) \\
 U_2(\Phi_1(\cdot,\l)) & U_2(\Phi_2(\cdot,\l))
 \end{pmatrix}.
\end{equation}
Inserting~\eqref{eq:Phi.def} and~\eqref{eq:cond} into~\eqref{eq:Delta.def}, setting $\varphi_{jk}(\l) := \varphi_{jk}(1,\l)$, and taking notations~\eqref{eq:Ajk.Jjk} into account we arrive at the following expression for the characteristic determinant
\begin{equation} \label{eq:Delta}
 \Delta_Q(\l) = J_{12} + J_{34}e^{i(b_1+b_2)\l}
 + J_{32}\varphi_{11}(\l) + J_{13}\varphi_{12}(\l)
 + J_{42}\varphi_{21}(\l) + J_{14}\varphi_{22}(\l).
\end{equation}
If $Q=0$ we denote a fundamental matrix solution as $\Phi^0(\cdot, \l)$. Clearly
\begin{equation} \label{eq:Phi0.def}
 \Phi^0(x, \l)
 = \begin{pmatrix} e^{i b_1 x \l} & 0 \\ 0 & e^{i b_2 x \l} \end{pmatrix}
 =: \begin{pmatrix}
 \varphi_{11}^0(x, \l) & \varphi_{12}^0(x, \l)\\
 \varphi_{21}^0(x,\l) & \varphi_{22}^0(x,\l)
 \end{pmatrix}
 =: \begin{pmatrix} \Phi_1^0(x, \l) & \Phi_2^0(x, \l) \end{pmatrix},
\end{equation}
for $x \in [0,1]$ and $\l \in \bC$. Here $\Phi_k^0(\cdot, \l)$ is the $k$th column of $\Phi^0(\cdot, \l)$. In
particular, the characteristic determinant $\Delta_0(\cdot)$ becomes
\begin{equation} \label{eq:Delta0}
 \Delta_0(\l) = J_{12} + J_{34}e^{i(b_1+b_2)\l}
 + J_{32}e^{ib_1\l} + J_{14}e^{ib_2\l}.
\end{equation}
In the case of Dirac system $(B =\diag (-1,1))$ this formula is
simplified to
\begin{equation} \label{eq:Delta0_Dirac}
 \Delta_0(\l) = J_{12} + J_{34} + J_{32}e^{-i\l} + J_{14}e^{i\l}.
\end{equation}
Let us recall the definition of regular boundary conditions.
\begin{definition} \label{def:regular}
Boundary conditions~\eqref{eq:cond} are called \textbf{regular} if
\begin{equation} \label{eq:J32J14ne0}
 J_{14} J_{32} \ne 0.
\end{equation}
\end{definition}
Let us recall one more definition (cf.~\cite{Katsn71}).
\begin{definition} \label{def:incompressible}
Let $\L := \{\l_n\}_{n \in \bZ}$ be a sequence of complex numbers. It is
called \textbf{incompressible} if for some $d \in \bN$ every rectangle
$[t-1,t+1] \times \bR \subset \bC$ contains at most $d$ entries of the sequence,
i.e.
\begin{equation} \label{eq:card.incomp}
 \card\{n \in \bZ : |\Re \l_n - t| \le 1 \} \le d, \quad t \in \bR.
\end{equation}
\end{definition}
Recall that $\bD_r(z) \subset \bC$ denotes the disc of radius $r$ with a
center $z$.

Let us recall certain important properties from~\cite{LunMal16JMAA} of the characteristic determinant $\Delta(\cdot)$ in the case of regular boundary conditions.
\begin{proposition}~\cite[Proposition 4.6]{LunMal16JMAA} \label{prop:sine.type}
Let the boundary conditions~\eqref{eq:cond} be regular. Then the characteristic determinant $\Delta_Q(\cdot)$ of the problem~\eqref{eq:system}--\eqref{eq:cond} given by~\eqref{eq:Delta}
has infinitely many zeros $\L := \{\l_n\}_{n \in \bZ}$ counting multiplicities and
\begin{equation} \label{eq:ln.in.Pih}
|\Im \l_n| \le h, \quad n \in \bZ, \qquad\text{for some}\ \ h \ge 0.
\end{equation}
Moreover, the sequence $\L$ is incompressible
and can be ordered in such a way that the following asymptotical formula holds
\begin{equation} \label{eq:lam.n=an+o1}
 \Re \l_n = \frac{2 \pi n}{b_2 - b_1} (1 + o(1)) \quad\text{as}\quad n \to\infty.
\end{equation}
\end{proposition}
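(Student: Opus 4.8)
The plan is to start from the explicit formula~\eqref{eq:Delta} for $\Delta_Q(\cdot)$ and treat it as a perturbation of the ``leading'' entire function
$$
 \Delta_0^{\mathrm{lead}}(\l) := J_{32}\, e^{i b_1 \l} + J_{14}\, e^{i b_2 \l},
$$
which under the regularity assumption~\eqref{eq:J32J14ne0} has $J_{14}J_{32}\neq 0$. First I would record the asymptotics of the entries $\varphi_{jk}(\l)=\varphi_{jk}(1,\l)$ of the fundamental matrix: from the integral (Volterra) equation for $\Phi(\cdot,\l)$ one gets, uniformly in horizontal strips $|\Im\l|\le h$, that $\varphi_{11}(\l)=e^{ib_1\l}(1+o(1))$, $\varphi_{22}(\l)=e^{ib_2\l}(1+o(1))$, while the off-diagonal entries $\varphi_{12},\varphi_{21}$ are of lower exponential order than the diagonal ones along the relevant directions; more precisely each $\varphi_{jk}(\l)$ is $O\!\bigl(e^{\max(b_2,-b_1)|\Im\l|}\bigr)$ and the diagonal terms dominate after dividing by $e^{i b_1\l}$ or $e^{i b_2\l}$ as $\Re\l\to\pm\infty$. (This is exactly the place where the transformation-operator / asymptotic-integration machinery of~\cite{LunMal16JMAA} is invoked; I would cite the relevant lemma there rather than re-derive it.) Substituting these asymptotics into~\eqref{eq:Delta} gives
$$
 \Delta_Q(\l) = J_{32}\,e^{i b_1\l}\bigl(1+o(1)\bigr) + J_{14}\,e^{i b_2\l}\bigl(1+o(1)\bigr) + O(1),
$$
uniformly on $|\Im\l|\le h$.

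Next I would localize the zeros. Writing $\l = \mu + i\tau$ with $|\tau|\le h$ and multiplying through by $e^{-i b_1\l}$, the equation $\Delta_Q(\l)=0$ becomes
$$
 J_{32}\bigl(1+o(1)\bigr) + J_{14}\,e^{i(b_2-b_1)\l}\bigl(1+o(1)\bigr) + o(1) = 0 ,
$$
so that $e^{i(b_2-b_1)\l} \to -J_{32}/J_{14}$ along any sequence of zeros with $\Re\l\to+\infty$ (a symmetric computation, multiplying by $e^{-i b_2\l}$, handles $\Re\l\to-\infty$). Since $|e^{i(b_2-b_1)\l}| = e^{-(b_2-b_1)\tau}$ and the right-hand limit is a nonzero constant, this forces $\tau=\Im\l$ to stay bounded: choosing $h$ large enough we get~\eqref{eq:ln.in.Pih} for all zeros. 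Taking arguments in $e^{i(b_2-b_1)\l}\to -J_{32}/J_{14}$ then yields $(b_2-b_1)\Re\l = 2\pi n + \arg(-J_{32}/J_{14}) + o(1)$, i.e. the zeros split (for $|\Re\l|$ large) into two arithmetic-progression-like families near the lines $\Re\l = \tfrac{2\pi n}{b_2-b_1}(1+o(1))$, which after relabeling gives~\eqref{eq:lam.n=an+o1}. A standard Rouché argument on the boundary of the rectangles $|\Re\l - \tfrac{2\pi n}{b_2-b_1}| \le \tfrac{\pi}{b_2-b_1}$, $|\Im\l|\le h$, comparing $\Delta_Q$ with $J_{32}e^{ib_1\l}+J_{14}e^{ib_2\l}$, shows that each such large rectangle contains exactly one zero counting multiplicity and that there are infinitely many zeros; away from the asymptotic regime only finitely many extra zeros can occur, all in the strip $|\Im\l|\le h$. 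This simultaneously proves incompressibility: a vertical strip $[t-1,t+1]\times\bR$ meets only boundedly many of these rectangles, hence contains at most $d$ zeros for a fixed $d\in\bN$.

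The main obstacle is the uniform control of the off-diagonal entries $\varphi_{12}(\l),\varphi_{21}(\l)$ and of the $o(1)$ corrections to $\varphi_{11},\varphi_{22}$: one needs these to be genuinely negligible compared with the dominant exponentials \emph{uniformly} in the strip $|\Im\l|\le h$ and \emph{uniformly} down to a fixed finite $|\Re\l|$, not just pointwise — otherwise the Rouché comparison on each rectangle, and hence the ``exactly one zero per rectangle'' count underlying both incompressibility and~\eqref{eq:lam.n=an+o1}, could fail. This uniformity is precisely what the asymptotic analysis of $\Phi(\cdot,\l)$ in~\cite{LunMal16JMAA} provides for $Q\in L^1$, so in the write-up I would isolate that as the one external input and keep the rest of the argument at the level of the explicit determinant~\eqref{eq:Delta} together with elementary exponential-sum and Rouché estimates. \qed
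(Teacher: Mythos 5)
Your overall strategy (fundamental-matrix asymptotics in a strip, followed by a comparison/Rouch\'e argument) is the right family of ideas, and in fact the paper itself gives no proof of this statement at all: it is imported verbatim from \cite[Proposition~4.6]{LunMal16JMAA}, whose proof compares $\Delta_Q$ with the \emph{full} unperturbed determinant $\Delta_0$ from \eqref{eq:Delta0} and uses sine-type-function theory. The genuine gap in your write-up is the reduction to the two-term function $J_{32}e^{ib_1\l}+J_{14}e^{ib_2\l}$ \emph{inside} the strip $|\Im\l|\le h$. There all four exponentials $1$, $e^{i(b_1+b_2)\l}$, $e^{ib_1\l}$, $e^{ib_2\l}$ have modulus $\asymp 1$, so after multiplying by $e^{-ib_1\l}$ the discarded contribution $J_{12}e^{-ib_1\l}+J_{34}e^{ib_2\l}$ is $O(1)$, not $o(1)$. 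Consequently the key deduction ``$e^{i(b_2-b_1)\l}\to -J_{32}/J_{14}$ along zeros'' is false, and with it the localization of the zeros near a single shifted arithmetic progression and the Rouch\'e comparison with the two-term function: on your rectangle boundaries the omitted terms are comparable in size to the comparison function, so the inequality needed for Rouch\'e cannot be verified. A concrete counterexample is the periodic Dirac case $b_2=-b_1=1$, $a=d=-1$, $b=c=0$ (regular, with $J_{14}=J_{32}=1$, $J_{12}=J_{34}=-1$): then $\Delta_0(\l)=2(\cos\l-1)$, the zeros are $\l_n=2\pi n$ with multiplicity two, $e^{i(b_2-b_1)\l_n}=1\ne -J_{32}/J_{14}=-1$, and every other of your rectangles contains two zeros while the remaining ones contain none, so the ``exactly one zero per rectangle'' count fails. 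More generally the zeros of $\Delta_0$ accumulate along several distinct progressions (rational $b_2/b_1$) or along no progression at all; compare Proposition~\ref{prop:nlim.inf}, which shows that for $b_2/b_1\notin\bQ$ the sequences $\{e^{ib_1\l_n^0}\}$ and $\{e^{ib_2\l_n^0}\}$ have infinitely many limit points.

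What survives, and how to repair the rest: the strip bound \eqref{eq:ln.in.Pih} is recoverable, because the regime in which your reduction is actually legitimate is $\Im\l\to+\infty$ (resp.\ $-\infty$), where dividing by $e^{ib_1\l}$ (resp.\ $e^{ib_2\l}$) really does make all remaining terms $o(1)$ since $b_1<0<b_2$ (given the global estimates on the $\varphi_{jk}$ available in \cite{LunMal16JMAA}); the flaw is only in transplanting that computation into the bounded strip. Inside the strip the correct comparison function is $\Delta_0$ itself: the $\varphi_{jk}$-asymptotics for $Q\in\LL{1}$ give $\Delta_Q(\l)=\Delta_0(\l)+o(1)$ uniformly in $|\Im\l|\le h$ as $|\Re\l|\to\infty$, and one then needs the sine-type properties of $\Delta_0$ --- zeros confined to a strip, incompressible, with counting function $\tfrac{b_2-b_1}{2\pi}\,T+O(1)$ on intervals of length $T$, and $|\Delta_0|$ bounded from below off the union of small disks around its zeros --- to run Rouch\'e on long rectangles and transfer the zero count to $\Delta_Q$. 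Incompressibility and \eqref{eq:lam.n=an+o1} then follow from this counting/density argument, not from a one-zero-per-rectangle localization; this is exactly the route taken in \cite{LunMal16JMAA}.
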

Clearly, the conclusions of Proposition~\ref{prop:sine.type} are valid for the characteristic determinant $\Delta_0(\cdot)$ given by~\eqref{eq:Delta0}. Let $\L_0 = \{\l_n^0\}_{n \in \bZ}$ be the sequence of its zeros counting multiplicity. Let us order the sequence $\L_0$ in a (possibly non-unique) way such that $\Re \l_n^0 \le \Re \l_{n+1}^0$, $n \in \bZ$.
Let us recall an important result from~\cite{LunMal14Dokl,LunMal16JMAA}
and~\cite{SavShk14} concerning asymptotic behavior of the eigenvalues.
\begin{proposition}[Proposition 4.7 in~\cite{LunMal16JMAA}]
\label{prop:Delta.regular.basic}
Let $Q \in \LL{1}$ and let boundary conditions~\eqref{eq:cond} be regular. Then the sequence $\L = \{\l_n\}_{n \in \bZ}$ of zeros of $\Delta_Q(\cdot)$ can be ordered in such a way that the following asymptotic formula holds
\begin{equation} \label{eq:l.n=l.n0+o(1)}
 \l_n = \l_n^0 + o(1), \quad\text{as}\quad n \to \infty, \quad n \in \bZ.
\end{equation}
\end{proposition}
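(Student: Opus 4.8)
\emph{Strategy and Step 1 (closeness of the characteristic determinants).} By Proposition~\ref{prop:sine.type} all zeros of $\Delta_Q(\cdot)$ and $\Delta_0(\cdot)$ lie in a fixed strip $\Pi_h := \{\l \in \bC : |\Im \l| \le h\}$, are incompressible with some constant $d$, and satisfy $\Re \l_n = \tfrac{2\pi n}{b_2-b_1}(1+o(1))$; so it is enough to show that (i) $\Delta_Q(\l) = \Delta_0(\l) + o(1)$ uniformly on $\Pi_h$ as $|\l| \to \infty$, and (ii) $\Delta_0$ stays away from $0$ outside arbitrarily small neighbourhoods of $\L_0$, after which a Rouché argument pins the zeros of $\Delta_Q$ to those of $\Delta_0$. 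To prove (i), note that on $\Pi_h$ the exponentials $e^{ib_1\l}$, $e^{ib_2\l}$, $e^{i(b_1+b_2)\l}$ are bounded above and below. Writing $\Phi(x,\l) = \Phi^0(x,\l)\Psi(x,\l)$ with $\Psi(0,\l)=I_2$, the factor $\Psi$ solves the Volterra equation $\Psi(x,\l) = I_2 - \int_0^x (\Phi^0(t,\l))^{-1} iBQ(t) \Phi^0(t,\l) \Psi(t,\l)\,dt$; since $Q$ is off-diagonal, the kernel has entries of the form $b_j Q_{jk}(t)\,e^{\pm i(b_2-b_1)\l t}$, with modulus on $\Pi_h$ bounded by $e^{(b_2-b_1)h}|b_j Q_{jk}(t)| \in L^1[0,1]$ uniformly in $\l$. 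Hence $\|\Psi(\cdot,\l)\|_\infty \le e^{C\|Q\|_1}$ on $\Pi_h$ by Gronwall, and the Neumann series for $\Psi$ — term by term via dominated convergence and the Riemann--Lebesgue lemma, which is uniform over the compact family $\{Q_{jk}(\cdot)\,e^{-(b_2-b_1)y\,\cdot}:|y|\le h\} \subset L^1[0,1]$ — gives $\Psi(1,\l) = I_2 + o(1)$ as $|\l| \to \infty$, $\l \in \Pi_h$. Since $\Phi^0(1,\l) = \diag(e^{ib_1\l},e^{ib_2\l})$ and $\varphi_{jk}(\l) = \varphi^0_{jj}(\l)\Psi_{jk}(1,\l)$, this yields $\varphi_{11}(\l)=e^{ib_1\l}+o(1)$, $\varphi_{22}(\l)=e^{ib_2\l}+o(1)$, $\varphi_{12}(\l)=o(1)$, $\varphi_{21}(\l)=o(1)$; inserting into~\eqref{eq:Delta} and comparing with~\eqref{eq:Delta0} gives $\Delta_Q(\l) = \Delta_0(\l) + o(1)$ uniformly on $\Pi_h$. (This is the classical $L^1$-asymptotics of Birkhoff--Langer type and may alternatively be quoted from the transformation-operator machinery.)

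\emph{Step 2 (lower bound for $\Delta_0$ away from its zeros).} Next I would prove: for every $\eps>0$ there are $c_\eps>0$ and $R_\eps>0$ such that $|\Delta_0(\l)| \ge c_\eps$ whenever $\l \in \Pi_h$, $|\l| \ge R_\eps$ and $\operatorname{dist}(\l,\L_0) \ge \eps$. When $b_2/b_1 \in \bQ$, write $b_1 = -\mu p$, $b_2 = \mu q$ with $\mu>0$, $\gcd(p,q)=1$; then $\Delta_0(\l) = w^{-p}P(w)$ with $w=e^{i\mu\l}$ and $P(w) = J_{14}w^{p+q}+J_{34}w^q+J_{12}w^p+J_{32}$, a polynomial of degree $p+q$ with $P(0)=J_{32}\ne 0$, and the bound is immediate because $\operatorname{dist}(\l,\L_0)\ge\eps$ keeps $w$ at a fixed distance from the (nonzero) roots of $P$ while $w$ lies in a fixed annulus. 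When $b_2/b_1 \notin \bQ$, the translates $\l \mapsto \Delta_0(\l+t)$, $t\in\bR$, form a normal family of exponential sums of the same shape (with $|J_{14}|$, $|J_{32}|$ unchanged, $J_{12}$ fixed, and the admissible limits governed by Weyl's equidistribution theorem); if the bound failed along some $\l_k \in \Pi_h$ with $|\l_k|\to\infty$, $\operatorname{dist}(\l_k,\L_0)\ge\eps$, then a subsequential limit $\wh\Delta_0 \not\equiv 0$ of $\Delta_0(\cdot+\Re\l_k)$ would vanish at $\lim(\l_k-\Re\l_k)$, which by Hurwitz's theorem is approached by zeros of $\Delta_0(\cdot+\Re\l_k)$ — contradicting $\operatorname{dist}(\l_k,\L_0)\ge\eps$. (This is the mechanism behind Proposition~\ref{prop:nlim.inf.intro}.)

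\emph{Step 3 (Rouché and passage $\eps\to 0$).} Fix $\eps \in (0,1/(4d)]$, take $c_\eps,R_\eps$ from Step~2, and by Step~1 choose $R' \ge R_\eps+1$ with $|\Delta_Q-\Delta_0| < c_\eps$ on $\Pi_h \cap \{|\l|\ge R'\}$. Put $G_\eps := \{\l\in\Pi_h : \operatorname{dist}(\l,\L_0)<\eps,\ |\l|>R'\}$. On $\partial G_\eps$ one has $\operatorname{dist}(\l,\L_0)=\eps$, hence $|\Delta_0(\l)| \ge c_\eps > |\Delta_Q(\l)-\Delta_0(\l)|$, so Rouché's theorem gives equally many zeros (with multiplicity) of $\Delta_Q$ and of $\Delta_0$ in each connected component of $G_\eps$, while $\Delta_Q$ has no zeros in $\Pi_h\cap\{|\l|>R'\}\setminus G_\eps$. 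A connected component is a chain of discs $\bD_\eps(\l_m^0)$ containing at most $2d$ of them (a chain of $\ell$ overlapping $\eps$-discs centered at zeros spans real width $<2\ell\eps$, hence holds $\le d(2\ell\eps+1)$ zeros, forcing $\ell\le 2d$ for $\eps\le 1/(4d)$), so every component has diameter $\le 4d\eps$. Matching, inside each component, the zeros of $\Delta_Q$ to those of $\Delta_0$ and using $\Re\l_n^0 = \tfrac{2\pi n}{b_2-b_1}(1+o(1))$ to turn this into a global bijection $\l_n \leftrightarrow \l_n^0$ for $|n|$ large, we obtain $|\l_n-\l_n^0| \le 4d\eps$ for $|n|\ge N_\eps$. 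Since $\eps$ was arbitrary, a diagonal/exhaustion argument over a sequence $\eps_j\downarrow 0$ (the component partitions refine as $\eps$ decreases) produces an ordering of $\L$ with $\l_n = \l_n^0 + o(1)$, which is the assertion.

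\emph{Main obstacle.} The delicate step is Step~2 in the case $b_2/b_1 \notin \bQ$: there is no explicit description of $\L_0$, and one must upgrade the soft normal-families/equidistribution information to the uniform lower bound $|\Delta_0(\l)| \ge c_\eps$ on $\{\operatorname{dist}(\l,\L_0)\ge\eps\}\cap\Pi_h$. Granting this, Step~1 is the standard $L^1$-asymptotics of the fundamental matrix in the strip and Step~3 is routine Rouché bookkeeping built on the incompressibility of $\L_0$.
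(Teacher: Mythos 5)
This proposition is imported verbatim from \cite{LunMal16JMAA} (Proposition 4.7 there); the present paper contains no proof of it, so there is no internal argument to compare yours against. On its own merits your reconstruction is correct and runs along the same standard lines as the cited source: the uniform strip asymptotics $\Delta_Q(\l)=\Delta_0(\l)+o(1)$, obtained from the Volterra/Neumann representation $\Phi=\Phi^0\Psi$ together with a Riemann--Lebesgue argument made uniform over $|\Im\l|\le h$ (equivalently, the transformation-operator estimates of \cite{LunMal16JMAA}); a lower bound $|\Delta_0(\l)|\ge c_\eps$ on the strip away from $\eps$-neighbourhoods of $\L_0$, where your normal-family/Hurwitz compactness argument is a legitimate proof of this known sine-type lemma in the irrational case (Weyl equidistribution is not actually needed there: all one uses is that limits of the unimodular coefficient sequences are unimodular, so the limiting exponential sum with leading coefficients of moduli $|J_{14}|,|J_{32}|\ne 0$ cannot vanish identically); and Rouch\'e plus incompressibility to trap the zeros of $\Delta_Q$ in components of diameter $O(d\eps)$ around $\L_0$ and let $\eps\downarrow 0$. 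The only step you leave implicit is the final index alignment: to turn the component-wise matching into an enumeration with the same index $n$ (no residual index shift, and equal leftover counts in the bounded region), one applies Rouch\'e once more to large rectangles whose vertical sides are chosen, via incompressibility, at distance at least $\eps$ from $\L_0$, so that the zero counts of $\Delta_Q$ and $\Delta_0$ inside such rectangles coincide; with that routine addition, which is consistent with your ``bookkeeping'' remark, the argument is complete.
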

To define strictly regular boundary conditions we need the following definition.
\begin{definition} \label{def:sequences}
\textbf{(i)} A sequence $\L := \{\l_n\}_{n \in \bZ}$ of complex numbers is
said to be \textbf{separated} if for some positive $\tau > 0,$
\begin{equation} \label{separ_cond}
 |\l_j - \l_k| > 2 \tau \quad \text{whenever}\quad j \ne k.
\end{equation}
In particular, all entries of a separated sequence are distinct.

\textbf{(ii)} The sequence $\L$ is said to be \textbf{asymptotically
separated} if for some $N \in \bN$ the subsequence $\{\l_n\}_{|n| > N}$ is
separated.
\end{definition}
Let us recall a notion of strictly regular boundary conditions.
\begin{definition} \label{def:strictly.regular}
Boundary conditions~\eqref{eq:cond} are called \textbf{strictly regular}, if they
are regular, i.e. $J_{14} J_{32} \ne 0$, and the sequence of zeros $\l_0 =
\{\l_n^0\}_{n \in \bZ}$ of the characteristic determinant $\Delta_0(\cdot)$ is
asymptotically separated. In particular, there exists $n_0$ such that zeros
$\{\l_n^0\}_{|n| > n_0}$ are geometrically and algebraically simple.
\end{definition}
It follows from Proposition~\ref{prop:Delta.regular.basic} that the sequence $\L = \{\l_n\}_{n \in \bZ}$ of zeros of $\Delta_Q(\cdot)$ is asymptotically separated if the boundary conditions are strictly regular.

Assuming boundary conditions~\eqref{eq:cond} to be regular, let us rewrite them in
a more convenient form. Since $J_{14} \ne 0$, the inverse matrix $A_{14}^{-1}$
exists. Therefore writing down boundary conditions~\eqref{eq:cond} as the vector
equation $\binom{U_1(y)}{U_2(y)} = 0$ and multiplying it by the matrix
$A_{14}^{-1}$ we transform these conditions as follows
\begin{equation} \label{eq:cond.canon}
\begin{cases}
 \wh{U}_{1}(y) = y_1(0) + b y_2(0) + a y_1(1) = 0, \\
 \wh{U}_{2}(y) = d y_2(0) + c y_1(1) + y_2(1) = 0,
\end{cases}
\end{equation}
with some $a,b,c,d \in \bC$. Now $J_{14} = 1$ and the boundary conditions
~\eqref{eq:cond.canon} are regular if and only if $J_{32} = ad-bc \ne 0$. Thus, the
characteristic determinants $\Delta_0(\cdot)$ and $\Delta(\cdot)$ take the form
\begin{align}
\label{eq:Delta0.new}
 \Delta_0(\l) &= d + a e^{i (b_1+b_2) \l} + (ad-bc) e^{i b_1 \l}
 + e^{i b_2 \l}, \\
\label{eq:Delta.new}
 \Delta(\l) &= d + a e^{i (b_1+b_2) \l} + (ad-bc) \varphi_{11}(\l)
 + \varphi_{22}(\l) + c \varphi_{12}(\l) + b \varphi_{21}(\l).
\end{align}
\begin{remark} \label{rem:cond.examples}
Let us list some types of \emph{strictly regular} boundary
conditions~\eqref{eq:cond.canon}. In all of these cases except 4b the set of zeros
of $\Delta_0$ is a union of finite number of arithmetic progressions.

\begin{enumerate}

\item Regular boundary conditions~\eqref{eq:cond.canon} for Dirac operator ($-b_1 = b_2 = 1$) are
strictly regular if and only if $(a-d)^2 \ne -4bc$.

\item Separated boundary conditions ($a=d=0$, $bc \ne 0$) are always strictly regular.

\item Let $b_2 / b_1 \in \bQ$, i.e. $b_1 = -n_1 b_0$, $b_2 = n_2 b_0$, $n_1, n_2 \in \bN$, $b_0 > 0$ and $\gcd(n_1,n_2)=1$. Since $ad \ne bc$, $\Delta_0(\cdot) e^{-i b_1 \l}$ is a polynomial in $e^{i b_0 \l}$ of degree $n_1 + n_2$ with non-zero roots. Hence, boundary conditions~\eqref{eq:cond.canon} are strictly regular if and only if this polynomial does not have multiple roots. Let us list some cases with explicit conditions.

\begin{enumerate}

\item~\cite[Lemma 5.3]{LunMal16JMAA} Let $ad \ne 0$ and $bc=0$. Then
boundary conditions~\eqref{eq:cond.canon} are strictly regular if and only if
\begin{equation} \label{eq:bc=0.crit.rat}
 b_1 \ln |d| + b_2 \ln |a| \ne 0 \quad\text{or}\quad
 n_1 \arg(-d) - n_2 \arg(-a) \notin 2 \pi \bZ.
\end{equation}

\item In particular, antiperiodic boundary conditions ($a=d=1$, $b=c=0$) are strictly regular if
and only if $n_1 - n_2$ is odd. Note that these boundary conditions are not strictly regular in
the case of a Dirac system.

\item~\cite[Proposition 5.6]{LunMal16JMAA} Let $a=0$, $bc \ne 0$. Then
boundary conditions~\eqref{eq:cond.canon} are strictly regular if and only if
\begin{equation} \label{eq:a=0.crit.rat}
 n_1^{n_1} n_2^{n_2} (-d)^{n_1 + n_2} \ne (n_1 + n_2)^{n_1 + n_2} (-b c)^{n_2}.
\end{equation}

\end{enumerate}

\item Let $\alp := -b_1 / b_2 \notin \bQ$. Then the problem of strict regularity of boundary conditions is generally much more complicated. Let us list some known cases:

\begin{enumerate}

\item~\cite[Lemma 5.3]{LunMal16JMAA} Let $ad \ne 0$ and $bc=0$. Then
boundary conditions~\eqref{eq:cond.canon} are strictly regular if and only if
\begin{equation} \label{eq:bc=0.crit.irrat}
 b_1 \ln |d| + b_2 \ln |a| \ne 0.
\end{equation}

\item~\cite[Proposition 5.6]{LunMal16JMAA} Let $a=0$ and $bc, d \in \bR
\setminus \{0\}$. Then boundary conditions~\eqref{eq:cond.canon} are strictly regular if and only if
\begin{equation} \label{eq:a=0.crit}
 d \ne -(\alp+1)\(|bc| \alp^{-\alp}\)^{\frac{1}{\alp+1}}.
\end{equation}

\end{enumerate}

\end{enumerate}
\end{remark}
It is well-known that the biorthogonal system to the system of root vectors of the operator $L_U(Q)$ coincides with the system of root vectors of the adjoint operator $L_U^*(Q) := (L_U(Q))^*$ after proper normalization. In this connection we give the explicit form of the operator $L_U(Q)^*$ in the case of boundary conditions~\eqref{eq:cond.canon}.
\begin{lemma}
\label{lem:adjoint}
Let $L_{U}(Q)$ be an operator corresponding to the problem~\eqref{eq:system}, \eqref{eq:cond.canon}. Then the adjoint operator $L_U^*(Q)$ is given by the differential expression~\eqref{eq:system} with $Q^*(x) = \begin{pmatrix} 0 & \ol{Q_{21}(x)} \\ \ol{Q_{12}(x)} & 0 \end{pmatrix}$ instead of $Q$ and the boundary conditions
\begin{equation} \label{eq:cond*}
\begin{cases}
 U_{*1}(y) = \ol{a} y_1(0) + y_1(1) + \beta^{-1} \ol{c} y_2(1) &= 0, \\
 U_{*2}(y) = \beta \ol{b} y_1(0) + y_2(0) + \ol{d} y_2(1) &= 0,
\end{cases}
\end{equation}
where as before $\beta = - b_2/b_1 > 0$. I.e. $L_U^*(Q) = L_{U*}(Q^*)$. Moreover, boundary conditions~\eqref{eq:cond*} are regular (strictly regular) simultaneously with boundary conditions~\eqref{eq:cond.canon}.
\end{lemma}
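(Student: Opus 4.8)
The plan is to compute $L_U^*(Q)$ directly from the definition of the adjoint via integration by parts, and then to normalize the resulting boundary conditions into the canonical form~\eqref{eq:cond.canon}. First I would take $f \in \dom(L_U(Q))$ and $g \in \dom(L_U^*(Q))$ and write out the identity $(L_U(Q) f, g)_{\fH} = (f, L_U^*(Q) g)_{\fH}$. Since $\cL = -i B^{-1} \frac{d}{dx} + Q(x)$ and $B = \diag(b_1,b_2)$ is real, integration by parts gives
\begin{equation*}
 (L_U(Q) f, g)_{\fH} - (f, (-i B^{-1} g' + Q^*(x) g))_{\fH}
 = -i \bigl(B^{-1} f(x), g(x)\bigr)_{\bC^2} \Big|_{x=0}^{x=1},
\end{equation*}
where the appearance of $Q^*(x) = \begin{psmallmatrix} 0 & \ol{Q_{21}} \\ \ol{Q_{12}} & 0 \end{psmallmatrix}$ comes from $(Q f, g) = (f, Q^* g)$ and the off-anti-diagonal structure of $Q$. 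Hence the differential expression of the adjoint is $-i B^{-1} g' + Q^*(x) g$, which is $\cL$ with $Q$ replaced by $Q^*$; it remains only to identify the correct domain, i.e.\ the boundary conditions killing the boundary form $\omega(f,g) := -i\bigl[(B^{-1} f(1), g(1))_{\bC^2} - (B^{-1} f(0), g(0))_{\bC^2}\bigr]$.

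Next I would make the boundary form explicit. Writing $f(0) = \col(f_1(0), f_2(0))$ etc., and using $B^{-1} = \diag(b_1^{-1}, b_2^{-1})$, we get
\begin{equation*}
 i\,\omega(f,g) = b_1^{-1} f_1(1)\ol{g_1(1)} + b_2^{-1} f_2(1)\ol{g_2(1)}
 - b_1^{-1} f_1(0)\ol{g_1(0)} - b_2^{-1} f_2(0)\ol{g_2(0)}.
\end{equation*}
On $\dom(L_U(Q))$ we may use~\eqref{eq:cond.canon} to eliminate two of the four boundary values of $f$, say $f_1(0) = -b f_2(0) - a f_1(1)$ and $f_2(1) = -d f_2(0) - c f_1(1)$, leaving $f_2(0)$ and $f_1(1)$ as free parameters. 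Substituting and collecting the coefficient of $f_2(0)$ and of $f_1(1)$ separately, the vanishing of $\omega(f,g)$ for all such $f$ becomes two linear conditions on $g$: the coefficient of $f_2(0)$ yields a relation among $\ol{g_1(0)}$, $\ol{g_2(0)}$, $\ol{g_2(1)}$ (after conjugation, among $g_1(0), g_2(0), g_2(1)$), and the coefficient of $f_1(1)$ yields a relation among $g_1(0), g_1(1), g_2(1)$. Carefully tracking the factors $b_1^{-1}, b_2^{-1}$ and recalling $\beta = -b_2/b_1$, these two relations are exactly (up to an overall nonzero scalar, which is the freedom in writing boundary conditions) the conditions~\eqref{eq:cond*}:
\begin{equation*}
 \ol{a}\, g_1(0) + g_1(1) + \beta^{-1}\ol{c}\, g_2(1) = 0, \qquad
 \beta\ol{b}\, g_1(0) + g_2(0) + \ol{d}\, g_2(1) = 0.
\end{equation*}
That $L_U(Q)^* = L_{U*}(Q^*)$ then follows because this computation shows $\dom(L_U^*(Q))$ consists precisely of the AC functions with $\cL_{Q^*} g \in \fH$ satisfying~\eqref{eq:cond*}, and the reverse inclusion is the standard fact that for an operator with compact resolvent defined by a regular differential expression plus $n$ linearly independent boundary conditions the adjoint is again of this form with complementary (here $2$) boundary conditions.

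Finally, for the regularity claim I would compute the invariants $J_{14}, J_{32}$ of the boundary conditions~\eqref{eq:cond*} using~\eqref{eq:Ajk.Jjk}. Reading off the coefficient matrix $A_* = \begin{psmallmatrix} \ol{a} & 0 & 1 & \beta^{-1}\ol{c} \\ \beta\ol{b} & 1 & 0 & \ol{d} \end{psmallmatrix}$ (columns indexed by $y_1(0), y_2(0), y_1(1), y_2(1)$), one finds $J_{14}^* = \det\begin{psmallmatrix} \ol a & \beta^{-1}\ol c \\ \beta\ol b & \ol d\end{psmallmatrix} = \ol{a}\ol{d} - \ol{b}\ol{c} = \ol{ad-bc}$ and $J_{32}^* = \det\begin{psmallmatrix} 1 & 0 \\ 0 & 1\end{psmallmatrix} = 1 \ne 0$ (with signs to be checked against the convention in~\eqref{eq:Ajk.Jjk}), so that $J_{14}^* J_{32}^* \ne 0$ iff $ad - bc \ne 0$, i.e.\ iff~\eqref{eq:cond.canon} is regular. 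For strict regularity one notes that by~\eqref{eq:Delta0.new} the characteristic determinant of $L_{U*}(Q^*)$ is, up to the nonzero factor $\ol{ad-bc}$ and the substitution $\l \mapsto \ol{\l}$ combined with complex conjugation, obtained from $\Delta_0$ associated to~\eqref{eq:cond.canon}; more directly, since $L_U^*(0)$ and $L_U(0)$ have mutually complex-conjugate spectra (eigenvalues of the adjoint being conjugates of eigenvalues), the zero set $\{\l_n^{0,*}\}$ is $\{\ol{\l_n^0}\}$, which is asymptotically separated iff $\{\l_n^0\}$ is. Hence~\eqref{eq:cond*} is strictly regular iff~\eqref{eq:cond.canon} is.

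The main obstacle I anticipate is purely bookkeeping: getting the overall normalization and the placement of the factors $\beta$ and $\beta^{-1}$ exactly right when one passes from the raw adjoint boundary conditions (the coefficient of $f_2(0)$ and of $f_1(1)$ in $\omega(f,g)$) to the stated form~\eqref{eq:cond*}. One must be careful that~\eqref{eq:cond*} is only determined up to left multiplication by an invertible $2\times 2$ matrix, and to verify that the particular representative written in the lemma — with the coefficient of $y_1(1)$ normalized to $1$ in the first equation and of $y_2(0)$ normalized to $1$ in the second — is indeed reachable, which forces the appearance of $\beta^{-1}\ol c$ and $\beta\ol b$ rather than, say, $\ol c$ and $\ol b$. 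Everything else is routine integration by parts and determinant computation.
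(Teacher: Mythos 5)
The paper states Lemma~\ref{lem:adjoint} without proof (the proof printed after it belongs to Corollary~\ref{cor:sa.crit}), so there is no argument of the author's to compare against; judged on its own, your proposal is correct and is the standard Lagrange-identity argument one would expect. Your integration by parts is right: since $B$ is real diagonal, $(\cL f,g)-(f,-iB^{-1}g'+Q^*g)=-i\bigl[(B^{-1}f,g)_{\bC^2}\bigr]_0^1$, and eliminating $f_1(0)=-bf_2(0)-af_1(1)$, $f_2(1)=-df_2(0)-cf_1(1)$ with $f_2(0),f_1(1)$ free, the coefficients of $f_1(1)$ and $f_2(0)$ give, after multiplying by $b_1$ and $-b_2$ respectively and conjugating, exactly the two conditions~\eqref{eq:cond*} with the factors $\beta^{-1}\ol{c}$ and $\beta\ol{b}$ in the stated normalization (no residual scalar ambiguity to worry about). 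The regularity computation $J_{14}^*=\ol{ad-bc}$, $J_{32}^*=1$ is also correct and matches $J_{14}=1$, $J_{32}=ad-bc$ for~\eqref{eq:cond.canon}. Two small caveats. First, your parenthetical description of how the two characteristic determinants are related is off: from~\eqref{eq:Delta0} one gets $\Delta_0^*(\l)=e^{i(b_1+b_2)\l}\,\ol{\Delta_0(\ol{\l})}$, i.e.\ the extra factor is $e^{i(b_1+b_2)\l}$, not $\ol{ad-bc}$; this does not matter because your alternative argument (the spectrum of the adjoint is the complex conjugate of the spectrum, with equal algebraic multiplicities, and conjugation preserves asymptotic separateness) already settles strict regularity. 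Second, the boundary-form computation by itself only yields $L_{U*}(Q^*)\subseteq L_U(Q)^*$; the reverse inclusion requires knowing that every $g\in\dom(L_U(Q)^*)$ is absolutely continuous with $\cL_{Q^*}g\in\fH$ (adjoint of the minimal operator equals the maximal operator for such first-order systems). You invoke this as a standard fact rather than proving it, which is acceptable, but be aware it is the one genuinely functional-analytic ingredient, and your phrasing of it (``complementary boundary conditions for an operator with compact resolvent'') is looser than the statement actually needed.
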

\begin{corollary} \label{cor:sa.crit}
The operator $L_U(0)$ corresponding to the problem~\eqref{eq:system},~\eqref{eq:cond.canon} with $Q=0$ is selfadjoint if and only if
\begin{equation} \label{eq:abcd.sa2}
a = \ol{d} u, \quad d = \ol{a} u, \quad b = -\beta^{-1} \ol{c} u,
\quad c = -\beta \ol{b} u, \qquad u := ad-bc \ne 0,
\end{equation}
which in turn is equivalent to~\eqref{eq:abcd.sa.intro}.
\end{corollary}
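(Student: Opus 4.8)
The plan is to derive the statement from Lemma~\ref{lem:adjoint}. Applying that lemma with $Q = 0$ (so that $Q^* = 0$ as well) yields $L_U(0)^* = L_{U*}(0)$, the operator generated by the differential expression $-iB^{-1}y'$ together with the boundary conditions~\eqref{eq:cond*}. Hence $L_U(0)$ is self-adjoint if and only if $L_U(0) = L_{U*}(0)$. Both operators act as $-iB^{-1}y'$ and are determined by the kernels of their respective pairs of boundary functionals, and the trace map $y \mapsto (y_1(0), y_2(0), y_1(1), y_2(1))$ sends the maximal domain onto $\bC^4$; therefore this equality holds if and only if the systems~\eqref{eq:cond.canon} and~\eqref{eq:cond*} are \emph{equivalent}, i.e.\ span the same two-dimensional subspace of linear functionals on $\bC^4$. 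This turns the assertion into a finite-dimensional computation.

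The next step is to put~\eqref{eq:cond*} into the canonical normalization of~\eqref{eq:cond.canon} and compare coefficients. Writing~\eqref{eq:cond*} as a vector equation with coefficient matrix $A_* = \begin{psmallmatrix} \ol a & 0 & 1 & \beta^{-1}\ol c \\ \beta\ol b & 1 & 0 & \ol d \end{psmallmatrix}$, the block $A_{*,14}$ formed by the columns of $y_1(0)$ and $y_2(1)$ has determinant $\ol a\,\ol d - \ol b\,\ol c = \ol{ad - bc} = \ol u \ne 0$ by regularity, so multiplication by $A_{*,14}^{-1}$ brings~\eqref{eq:cond*} to canonical form, and a short matrix multiplication gives
\begin{equation*}
 A_{*,14}^{-1} A_* = \begin{pmatrix} 1 & -\beta^{-1}\ol c/\ol u & \ol d/\ol u & 0 \\ 0 & \ol a/\ol u & -\beta\ol b/\ol u & 1 \end{pmatrix}.
\end{equation*}
By uniqueness of the canonical form, $L_U(0) = L_{U*}(0)$ exactly when $a = \ol d/\ol u$, $b = -\beta^{-1}\ol c/\ol u$, $c = -\beta\ol b/\ol u$, $d = \ol a/\ol u$. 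Substituting these four relations back into $u = ad - bc$ gives $u = (\ol a\,\ol d - \ol b\,\ol c)/\ol u^{2} = \ol u/\ol u^{2} = 1/\ol u$, hence $|u| = 1$ and $1/\ol u = u$, so the four relations become precisely~\eqref{eq:abcd.sa2}.

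It remains to show that~\eqref{eq:abcd.sa2} is equivalent to~\eqref{eq:abcd.sa.intro}, and here the plan is to package both in one matrix. Put $W := \begin{psmallmatrix} a & \sqrt{\beta}\,b \\ \beta^{-1/2}c & d \end{psmallmatrix}$, so that $\det W = ad - bc = u$. The three identities in~\eqref{eq:abcd.sa.intro} state exactly that the two rows of $W$ are orthonormal in $\bC^2$, i.e.\ that $W$ is unitary. For a $2 \times 2$ matrix with $\det W = u \ne 0$, unitarity is equivalent to $W^* = u^{-1}\begin{psmallmatrix} d & -\sqrt{\beta}\,b \\ -\beta^{-1/2}c & a \end{psmallmatrix}$, and reading off the four entries of this identity produces exactly $a = \ol d u$, $d = \ol a u$, $b = -\beta^{-1}\ol c u$, $c = -\beta\ol b u$, which is~\eqref{eq:abcd.sa2}. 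Chaining the three equivalences yields self-adjointness of $L_U(0)$ $\Leftrightarrow$~\eqref{eq:abcd.sa2} $\Leftrightarrow$~\eqref{eq:abcd.sa.intro}.

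The two matrix multiplications are routine. The one point that deserves care is the reduction carried out in the first paragraph — that equality of the two operators amounts to equivalence of their boundary-condition systems; this is standard for restrictions of a common maximal first-order operator, but it should be stated explicitly. After that the argument is a short, entirely elementary computation.
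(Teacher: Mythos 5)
Your proof is correct, and its second half takes a genuinely different route from the paper. For the equivalence ``self-adjointness $\Leftrightarrow$~\eqref{eq:abcd.sa2}'' you and the paper do essentially the same thing: both compare~\eqref{eq:cond.canon} with the adjoint conditions~\eqref{eq:cond*} from Lemma~\ref{lem:adjoint}; the paper writes the two systems in the form $v + Mw = 0$, $Nv + w = 0$ and demands $N = M^{-1}$, which yields~\eqref{eq:abcd.sa2} literally, while your normalization of~\eqref{eq:cond*} by $A_{*,14}^{-1}$ produces the complex-conjugate relations $a = \ol d/\ol u$, etc.; these coincide with~\eqref{eq:abcd.sa2} since each system of four equations is the entrywise conjugate of the other (your detour through $|u|=1$ also works, though you only spell out one direction of that identification — the converse is the same symmetric computation, or simply conjugation). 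The real divergence is in the second equivalence: the paper deliberately avoids comparing~\eqref{eq:abcd.sa2} and~\eqref{eq:abcd.sa.intro} directly (it even remarks this is ``somewhat tedious'') and instead proves self-adjointness $\Leftrightarrow$~\eqref{eq:abcd.sa.intro} independently, by rewriting the conditions as $Cy(0)+Dy(1)=0$ and invoking the criterion $CBC^* = DBD^*$ from~\cite{LunMal14IEOT}. You instead prove the algebraic equivalence directly by observing that both~\eqref{eq:abcd.sa.intro} and~\eqref{eq:abcd.sa2} express unitarity of the weighted matrix $W = \begin{pmatrix} a & \sqrt{\beta}\, b \\ \beta^{-1/2} c & d\end{pmatrix}$ — orthonormal rows on one side, $W^* = u^{-1}\operatorname{adj}(W) = W^{-1}$ on the other. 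This is self-contained (no external self-adjointness criterion), and it shows that the direct equivalence the paper sidesteps is in fact a one-line consequence of the unitarity reformulation; what the paper's route buys in exchange is an independent verification of~\eqref{eq:abcd.sa.intro} straight from the general criterion $CBC^*=DBD^*$, without passing through~\eqref{eq:abcd.sa2} at all.
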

\begin{proof}
Boundary conditions~\eqref{eq:cond.canon} and~\eqref{eq:cond*} can be rewriten in a matrix form as
\begin{equation}
\binom{y_1(0)}{y_2(1)} + \begin{pmatrix} a & b \\ c & d \end{pmatrix}
\binom{y_1(1)}{y_2(0)} = 0 \quad \text{and} \quad
\begin{pmatrix} \ol{a} & \beta^{-1} \ol{c} \\ \beta \ol{b} & \ol{d} \end{pmatrix} \binom{y_1(0)}{y_2(1)} +
\binom{y_1(1)}{y_2(0)} = 0,
\end{equation}
respectively. Hence boundary conditions~\eqref{eq:cond.canon} and~\eqref{eq:cond*} are equivalent if and only if
\begin{equation}
\begin{pmatrix} \ol{a} & \beta^{-1} \ol{c} \\ \beta \ol{b} & \ol{d} \end{pmatrix}  = \begin{pmatrix} a & b \\ c & d \end{pmatrix}^{-1} =
\frac{1}{ad-bc}\begin{pmatrix} d & -b \\ -c & a \end{pmatrix} =
\frac{1}{u}\begin{pmatrix} d & -b \\ -c & a \end{pmatrix},
\end{equation}
which is equivalent to~\eqref{eq:abcd.sa2}.

On the other hand we can rewrite condtions~\eqref{eq:cond.canon} as
\begin{equation} \label{eq:Cy0+Dy1}
C y(0) + D y(1) = 0, \qquad
C = \begin{pmatrix} 1 & b \\ 0 & d \end{pmatrix}, \qquad
D = \begin{pmatrix} a & 0 \\ c & 1 \end{pmatrix}.
\end{equation}
According to~\cite[Lemma 5.1]{LunMal14IEOT}
operator $L_U(0)$ with boundary conditions rewritten as~\eqref{eq:Cy0+Dy1} is selfadjoint if and only if $C B C^* = D B D^*$. Straightforward calculations show that
\begin{align}
\label{eq:CBC*}
 b_1^{-1} C B C^* = b_1^{-1} \begin{pmatrix} b_1 + b_2 |b|^2 &
 b_2 b  \ol{d} \\ b_2 \ol{b} d & b_2 |d|^2 \end{pmatrix} &=
 \begin{pmatrix} 1 - \beta |b|^2 & -\beta b \ol{d} \\ -\beta \ol{b} d &
 -\beta |d|^2 \end{pmatrix}, \\
\label{eq:DBD*}
 b_1^{-1} D B D^* = b_1^{-1} \begin{pmatrix} b_1 |a|^2 & b_1 a \ol{c} \\
 b_1 \ol{a} c & b_1 |c|^2 + b_2 \end{pmatrix} &= \begin{pmatrix}
 |a|^2 & a \ol{c} \\ \ol{a} c & |c|^2 - \beta \end{pmatrix}.
\end{align}
Hence $C B C^* = D B D^*$ is equivalent
to the condition~\eqref{eq:abcd.sa.intro}. It is interesting to note that establishing equivalence of~\eqref{eq:abcd.sa.intro} and~\eqref{eq:abcd.sa2} directly is somewhat tedious.
\end{proof}
\section{Properties of the spectrum of the unperturbed operator} \label{sec:unperturb}
In this section we obtain some properties of the sequence $\{\l_n^0\}_{n \in \bZ}$ of the characteristic determinant $\Delta_0(\cdot)$ in the case of regular boundary conditions~\eqref{eq:cond.canon} that will be needed in Section~\ref{sec:bari.c0} to study Bari $c_0$-property of the system of root vectors of the operator $L_U(0)$ (see Definition~\ref{def:bari.c0}).
Recall that $x_n \asymp y_n$, $n \in \bZ$, means that there exists $C_2 > C_1 > 0$ such that $C_1 |y_n| \le |x_n| \le C_2 |y_n|$, $n \in \bZ$. We start the following simple property of zeros of $\Delta_0(\cdot)$.
\begin{lemma} \label{lem:ln0.exp.asymp}
Let boundary conditions~\eqref{eq:cond.canon} be regular and $\L_0 := \{\l_n^0\}_{n \in \bZ}$ be the sequence of zeros of $\Delta_0(\cdot)$ counting multiplicity. Set
\begin{equation} \label{eq:ekn.def}
 e_{1n} := e_{1,n} := e^{i b_1 \l_n^0}, \qquad
 e_{2n} := e_{2,n} := e^{-i b_2 \l_n^0},
 \qquad n \in \bZ.
\end{equation}

\textbf{(i)} Let $bc \ne 0$. Then
\begin{equation} \label{eq:1+ae1.1+de2}
 1 + a e_{1n} \asymp 1,
 \qquad 1 + d e_{2n} \asymp 1, \qquad n \in \bZ.
\end{equation}

\textbf{(ii)} Let boundary conditions~\eqref{eq:cond.canon} be strictly regular. Then
\begin{equation} \label{eq:|1+de|+|1+ae|.asymp.1}
 \abs{1 + a e_{1n}}^2 + \abs{1 + d e_{2n}}^2
 \asymp 1, \quad n \in \bZ.
\end{equation}
\end{lemma}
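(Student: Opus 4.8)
The plan is to exploit the explicit form of the characteristic determinant $\Delta_0(\l) = d + a e^{i(b_1+b_2)\l} + (ad-bc) e^{ib_1\l} + e^{ib_2\l}$ together with the boundedness of the imaginary parts of the zeros, which is guaranteed by Proposition~\ref{prop:sine.type}: there is $h \ge 0$ with $|\Im \l_n^0| \le h$ for all $n$. Consequently each of the quantities $|e_{1n}| = |e^{ib_1\l_n^0}| = e^{-b_1 \Im \l_n^0}$ and $|e_{2n}| = |e^{-ib_2\l_n^0}| = e^{b_2 \Im \l_n^0}$ lies in a fixed interval $[m, M]$ with $0 < m \le M < \infty$; in particular $e_{1n} \asymp 1$ and $e_{2n} \asymp 1$. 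This is the uniform two-sided bound that all of the estimates will rest on.

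For part (i), assume $bc \ne 0$. Dividing the identity $\Delta_0(\l_n^0) = 0$ by $e^{ib_1\l_n^0} = e_{1n}$ and regrouping, one gets $(1 + a e_{1n})(\,\cdot\,) = bc$ after factoring; more precisely, writing $\Delta_0(\l) e^{-ib_1\l} = (1 + a e^{ib_2\l})(1 + d e^{-ib_2\l}) e^{ib_2\l}\big/(\text{something}) - bc$ — the cleanest route is to observe directly from~\eqref{eq:Delta0.new} that
\begin{equation*}
 \Delta_0(\l) = \bigl(1 + a e^{ib_1\l}\bigr)\bigl(e^{ib_2\l} + d\bigr) \cdot e^{0} - bc\, e^{ib_1\l}
\end{equation*}
(check: expanding gives $e^{ib_2\l} + d + a e^{i(b_1+b_2)\l} + ad\, e^{ib_1\l} - bc\, e^{ib_1\l}$, which matches). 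Hence at a zero $\l_n^0$ we obtain the exact relation $(1 + a e_{1n})(e^{ib_2\l_n^0} + d) = bc\, e_{1n}$. Since $e^{ib_2\l_n^0} = e_{2n}^{-1} \asymp 1$ and $e_{1n} \asymp 1$ and $bc \ne 0$, the right-hand side is $\asymp 1$ and the factor $e^{ib_2\l_n^0} + d$ is bounded above, so $|1 + a e_{1n}|$ is bounded below; it is also bounded above because $e_{1n}$ is. This gives $1 + a e_{1n} \asymp 1$, and symmetrically (dividing instead by $e^{ib_2\l}$, or using $(1 + d e_{2n})(1 + a e^{i b_1 \l_n^0})\cdots$) one gets $1 + d e_{2n} \asymp 1$. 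The symmetric factorization is $\Delta_0(\l) = (1 + d e^{-ib_2\l})(a e^{i(b_1+b_2)\l} + e^{ib_2\l})\cdot(\dots)$; I would double-check signs but the mechanism is identical.

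For part (ii), assume only strict regularity, so possibly $bc = 0$; then one of $b, c$ vanishes and the factorization in (i) degenerates. If $bc = 0$ then $\Delta_0(\l) = (1 + a e^{ib_1\l})(d + e^{ib_2\l}) = (1 + a e^{ib_1\l})(1 + d e^{-ib_2\l})e^{ib_2\l}$, so every zero makes exactly one of the two factors $1 + a e_{1n}$, $1 + d e_{2n}$ vanish. Thus $\abs{1+ae_{1n}}^2 + \abs{1 + d e_{2n}}^2$ equals the single surviving factor's square modulus; the upper bound is immediate from $e_{1n}, e_{2n} \asymp 1$, and the lower bound is where strict regularity enters — it is exactly the condition~\eqref{eq:bc=0.crit.rat}/\eqref{eq:bc=0.crit.irrat} of Remark~\ref{rem:cond.examples} that forces the nonvanishing zeros of the surviving factor to stay uniformly away from $-1$ after multiplication (i.e. asymptotic separation of $\L_0$ translates into $|1 + a e_{1n}| \gtrsim 1$ on the relevant subsequence, and likewise for the other). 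When $bc \ne 0$, part (i) already gives the lower bound for each summand separately, which is stronger than needed. The main obstacle is precisely the lower bound in the degenerate case $bc = 0$: one must argue that asymptotic separation of the zero sequence of a product of two exponential pencils, each of whose zeros forms an arithmetic progression (up to $o(1)$, by Proposition~\ref{prop:Delta.regular.basic}), is equivalent to those two progressions not colliding, and then read off that $e_{1n}$ stays bounded away from $-1/a$ along the first family and $e_{2n}$ away from $-1/d$ along the second — which is the content of~\eqref{eq:bc=0.crit.rat}–\eqref{eq:bc=0.crit.irrat}. I expect this to be a short argument once the explicit description of the zeros of each factor is invoked, but it is the only place where a genuine case distinction and a reference to the strict-regularity classification are unavoidable.
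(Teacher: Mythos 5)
Part (i) of your proposal is correct and is essentially the paper's own argument: the factorization $\Delta_0(\l)=(1+ae^{ib_1\l})(d+e^{ib_2\l})-bc\,e^{ib_1\l}$ evaluated at the zeros gives the identity $(1+ae_{1n})(1+de_{2n})=bc\,e_{1n}e_{2n}$ (the paper's~\eqref{eq:Delta_0_in_roots}), and the strip bound $|\Im\l_n^0|\le h$ from Proposition~\ref{prop:sine.type} gives $e_{jn}\asymp 1$, from which both relations in~\eqref{eq:1+ae1.1+de2} follow. You do not need a second ``symmetric factorization'' whose signs you would have to double-check: once $1+ae_{1n}\asymp 1$ is known, the same identity yields $|1+de_{2n}|=|bc\,e_{1n}e_{2n}|/|1+ae_{1n}|\asymp 1$.

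The gap is in part (ii), case $bc=0$, which, as you correctly say, is the only nontrivial case. Two issues. First, your ``read off'' statement is backwards and false as written: along the first family $\L_0^1$ (the zeros of $1+ae^{ib_1\l}$) one has $e_{1n}=-1/a$ identically, so $e_{1n}$ certainly does not stay away from $-1/a$ there; what must be proved is the cross statement, namely that $e_{2n}$ stays uniformly away from $-1/d$ along $\L_0^1$ and $e_{1n}$ away from $-1/a$ along $\L_0^2$, i.e. that at each zero of one factor the \emph{other} factor is bounded below (this is also what the paper intends in~\eqref{eq:1+de.1+ae.asymp.1}, whose indices are likewise swapped). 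Second, you do not actually derive this bound; you only state that you expect it to be short. The paper's route is: since $ad\ne0$, the zeros are \emph{exactly} the two explicit arithmetic progressions~\eqref{eq:l1n.l2n.bc=0} on horizontal lines (no appeal to Proposition~\ref{prop:Delta.regular.basic} or to the criteria~\eqref{eq:bc=0.crit.rat}--\eqref{eq:bc=0.crit.irrat} is needed); strict regularity means their union is asymptotically separated, and this upgrades to genuine uniform separation because for $b_2/b_1\in\bQ$ the set $\L_0$ is periodic, while for $b_2/b_1\notin\bQ$ the two progressions must lie on \emph{distinct} horizontal lines. Note that mere ``non-collision'' of the progressions, which is what you propose to extract, is not sufficient in the irrational case: two disjoint progressions on the same line with incommensurable steps come arbitrarily close by equidistribution, so it is the uniform gap that must be used. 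With a uniform gap, at a zero from one family the other factor is $\asymp 1$ (its modulus is comparable, inside the strip, to the distance to its own zero set), which is the lower bound in~\eqref{eq:|1+de|+|1+ae|.asymp.1}; the upper bound is immediate from $e_{jn}\asymp1$, as you observe. So your outline follows the paper's approach, but the decisive lower-bound step needs the corrected statement and this separation argument to be complete.
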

\begin{proof}
Note that
\begin{equation}
 \Delta_0(\l)
 = \(1 + a e^{i b_1 \l}\) \(d + e^{i b_2 \l}\) - b c \cdot e^{i b_1 \l}
 = e^{i b_2\l} \(1 + a e^{i b_1 \l}\) \(1 + d e^{-i b_2\l}\)
 - b c \cdot e^{i b_1 \l}, \quad \l \in \bC.
\end{equation}
Since $\Delta(\l_n^0) = 0$, $n \in \bZ$, then with account of notation~\eqref{eq:ekn.def} we have
\begin{equation} \label{eq:Delta_0_in_roots}
 \(1 + a e_{1n}\) \(1 + d e_{2n}\)
 = b c e_{1n} e_{2n}, \qquad n \in \bZ,
\end{equation}
According to Proposition~\ref{prop:sine.type} the relation~\eqref{eq:ln.in.Pih} holds.
Hence
\begin{equation} \label{eq:e.bj.ln.asymp.1}
 e_{jn} \asymp 1, \qquad n \in \bZ, \quad j \in \{1, 2\}.
\end{equation}

\textbf{(i)} Since $bc \ne 0$, then combining~\eqref{eq:Delta_0_in_roots} with~\eqref{eq:e.bj.ln.asymp.1} yields the following estimate with some $C_3 > C_2 > C_1 > 0$,
\begin{equation} \label{eq:||+||>=bc.e}
 C_3 > C_2 \abs{1 + a e_{1n}}
 \ge \abs{(1 + a e_{1n})(1 + d e_{2n})}
 = 2 |bc| \cdot \abs{e_{1n} e_{2n}} > C_1, \quad |n| \in \bZ,
\end{equation}
which proves the first relation in~\eqref{eq:1+ae1.1+de2}. The second relation is proved similarly.

\textbf{(ii)} If $bc \ne 0$ then~\eqref{eq:|1+de|+|1+ae|.asymp.1} is implied by~\eqref{eq:1+ae1.1+de2}. Let $b c = 0$. In this case $a d \ne 0$ and $\Delta_0(\l) = e^{i b_2\l} \(1 + a e^{i b_1 \l}\) \(1 + d e^{-i b_2\l}\)$. It is clear that $\L_0 = \L_0^1 \cup \L_0^2$, where $\L_0^1 = \{\l_{1,n}^{0}\}_{n \in \bZ}$ and $\L_0^2 = \{\l_{2,n}^{0}\}_{n \in \bZ}$ are the sequences of zeros of the first and second factor, respectively. Clearly, these sequences constitute arithmetic progressions lying on the lines, parallel to the real axis. More precisely,
\begin{equation} \label{eq:l1n.l2n.bc=0}
 \l_{1,n}^{0} =
 \frac{\arg(-a^{-1}) + 2 \pi n}{b_1} + i\frac{\ln|a|}{b_1},
 \qquad
 \l_{2,n}^{0} = \frac{\arg(-d) + 2 \pi n}{b_2} - i\frac{\ln|d|}{b_2},
\end{equation}
for $n \in \bZ$. Since boundary conditions~\eqref{eq:cond.canon} are strictly regular, then the union of these arithmetic progressions $\L_0 = \L_0^1 \cup \L_0^2$ is asymptotically separated. It is easily seen that, in fact, $\L_0$ is separated: if $b_2/b_1 \in \bQ$ then $\L_0$ is periodic and if $b_2/b_1 \notin \bQ$ then arithmetic progressions $\L_0^1$ and $\L_0^2$ necessarily lie on different parallel lines.
This implies the following asymptotic relations:
\begin{equation} \label{eq:1+de.1+ae.asymp.1}
 1 + d e^{-i b_2 \l_{2,n}^0} \asymp 1, \quad
 1 + a e^{i b_1 \l_{1,n}^0} \asymp 1, \qquad n \in \bZ,
\end{equation}
Since $\L_0 = \L_0^1 \cup \L_0^2$, relations~\eqref{eq:1+de.1+ae.asymp.1} trivially imply~\eqref{eq:|1+de|+|1+ae|.asymp.1}.
\end{proof}
Throughout the rest of the section we will denote by $\fr{x} := x - \floor{x}$ the fractional part of $x \in \bR$. To treat the tricky case of $\beta = -b_2/b_1 \notin \bQ$, we need Weyl's equidistribution theorem (see~\cite[Theorem 4.2.2.1]{SteinShak03}). More precisely, we need the following its consequence.
\begin{lemma} \label{lem:weyl}
Let $\beta \in \bR \setminus \bQ$ and $0 \le a < b \le 1$. Then for any $\eps>0$ there exists $M_{a,b,\eps} > 0$ such that for $M \in \bN$ we have:
\begin{equation}
 \card\{m \in \{-M, \ldots, M\} : \fr{\beta m} \in [a,b]\}
 \le 2 (b - a + \eps) M, \qquad M \ge M_{a,b,\eps}.
\end{equation}
\end{lemma}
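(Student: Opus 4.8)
The plan is to deduce this bound as a quantitative, two-sided form of Weyl's equidistribution theorem. Recall the version we invoke (\cite[Theorem 4.2.2.1]{SteinShak03}): for every $\gamma \in \bR \setminus \bQ$ the sequence $\{\fr{\gamma n}\}_{n \ge 1}$ is equidistributed in $[0,1)$, so that for any $0 \le a \le b \le 1$,
$$
 \frac{1}{N}\,\card\{n \in \{1, \ldots, N\} : \fr{\gamma n} \in [a,b]\}
 \;\longrightarrow\; b - a
 \qquad (N \to \infty).
$$
Since $\beta \notin \bQ$ forces $\fr{\beta m} \neq 0$ for all $m \neq 0$, enlarging, if needed, the closed interval $[a,b]$ to a slightly longer half-open one loses nothing, and the statement above applies verbatim.

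First I would split $\{-M, \ldots, M\}$ into the positive block $\{1, \ldots, M\}$, the negative block $\{-M, \ldots, -1\}$, and the single index $m = 0$. For the positive block, apply the displayed convergence with $\gamma = \beta$: there is $N_1 = N_1(a,b,\eps)$ with $\card\{n \in \{1, \ldots, M\} : \fr{\beta n} \in [a,b]\} \le \(b - a + \eps/2\) M$ for all $M \ge N_1$. For the negative block, write $m = -n$ with $n \ge 1$, note that $\fr{\beta m} = \fr{(-\beta) n}$, and apply the same convergence with $\gamma = -\beta$ (also irrational): there is $N_2 = N_2(a,b,\eps)$ with $\card\{n \in \{1, \ldots, M\} : \fr{(-\beta) n} \in [a,b]\} \le \(b - a + \eps/2\) M$ for $M \ge N_2$. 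Adding the two estimates, together with the contribution of at most $1$ coming from $m = 0$, yields for all $M \ge \max(N_1, N_2)$
$$
 \card\{m \in \{-M, \ldots, M\} : \fr{\beta m} \in [a,b]\}
 \;\le\; 2\(b - a + \tfrac{\eps}{2}\) M + 1 .
$$

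It then remains to absorb the additive constant: since $2\(b - a + \eps\) M - 2\(b - a + \eps/2\) M = \eps M \ge 1$ as soon as $M \ge 1/\eps$, the asserted inequality holds with $M_{a,b,\eps} := \max\bigl(N_1, N_2, \ceil{1/\eps}\bigr)$. I do not expect a genuine obstacle here; the only points requiring (minor) care are the reduction of the negative indices to the irrational number $-\beta$, the harmless single term $m = 0$, and converting the qualitative limit furnished by Weyl's theorem into the explicit slack $\eps M$, all of which are dispatched above.
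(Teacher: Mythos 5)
Your proof is correct, and it is essentially the argument the paper intends: the paper states this lemma without proof as a direct consequence of Weyl's equidistribution theorem, and your write-up simply supplies the routine details (splitting off the negative indices via the irrational number $-\beta$, the single term $m=0$, and absorbing the additive constant $1$ into the slack $\eps M$ for $M \ge 1/\eps$). Nothing further is needed.
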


First, let us recall some simple properties of the sequences that have a finite set of limit points. For brevity we denote the cardinality of the limit points set of a bounded sequence $\{z_n\}_{n \in \bZ} \subset \bC$ as $\nlim\{z_n\}_{n \in \bZ}$,
\begin{multline}
 \nlim\curl{z_n}_{n \in \bZ} := \card\left\{z \in \bC :
 \lim_{k \to \infty} z_{n_k} = z \right. \\
 \left. \text{for some} \ \ \{n_k\}_{k \in \bN} \subset \bZ \ \ \text{such that} \ \ n_j \ne n_k
 \ \ \text{for} \ \ j \ne k \right\}.
\end{multline}
If the set of limit points is infinite we set $\nlim\curl{z_n}_{n \in \bZ} := \infty$.
\begin{lemma} \label{lem:limit}
The following statements hold:

\begin{enumerate}
\item[(i)] Let $\{a_n\}_{n \in \bZ} \subset \bC$ be bounded, $f$ be continuous on $\cup_{|n| > N} \ol{\bD_{\eps}(a_n)}$ for some $\eps>0$ and $N > 0$, and $\nlim \{a_n\}_{n \in \bZ} = m \in \bN$. Then $\nlim \{f(a_n)\}_{n \in \bZ} \le m$.
\item[(ii)] Let $\{a_n\}_{n \in \bZ} \subset \bC$ and $\{b_n\}_{n \in \bZ} \subset \bC$ be bounded sequences and let $\nlim \{a_n\}_{n \in \bZ} = m_a$ and $\nlim \{b_n\}_{n \in \bZ} = m_b \in \bN$. Then $\nlim \{a_n + b_n\}_{n \in \bZ} \le m_a m_b$ and $\nlim \{a_n b_n\}_{n \in \bZ} \le m_a m_b$.
\item[(iii)] Let $y_n \in [0, 1)$, $n \in \bZ$, and let $\nlim \{\sin (2 \pi y_n)\}_{n \in \bZ} = m \in \bN$. Then $\nlim \{y_n\}_{n \in \bZ} \le 2m+1$.
\item[(iv)] Let $a, b \in \bR$, $\{x_n\}_{n \in \bZ} \subset \bR$ be bounded and $\nlim \{x_n\}_{n \in \bZ} = m \in \bN$. Then $$\nlim\{\fr{a x_n + b}\}_{n \in \bZ} \le m+1.$$
\end{enumerate}
\end{lemma}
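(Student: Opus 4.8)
The plan is to dispatch the four assertions largely independently, since each reduces to an elementary compactness or counting argument. For (i), I would argue by contradiction: suppose $\nlim\{f(a_n)\}_{n \in \bZ} \ge m+1$, so there are $m+1$ distinct limit points $w_1, \ldots, w_{m+1}$ of $\{f(a_n)\}$, realized along pairwise-disjoint index subsequences. Passing to a further subsequence of each (using boundedness of $\{a_n\}$), each such subsequence of $\{a_n\}$ converges to some $a^{(j)}$; by continuity of $f$ on the neighborhood $\cup_{|n|>N}\ol{\bD_\eps(a_n)}$ one gets $f(a^{(j)}) = w_j$, so the $a^{(j)}$ are $m+1$ distinct limit points of $\{a_n\}$ (distinct because their $f$-images differ), contradicting $\nlim\{a_n\} = m$. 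For (ii), the same device works: given a subsequence along which $a_n b_n$ (resp.\ $a_n + b_n$) converges, extract a sub-subsequence along which $a_n \to \alpha$ and $b_n \to \beta$ with $\alpha$ a limit point of $\{a_n\}$ and $\beta$ a limit point of $\{b_n\}$; the limit of the product (resp.\ sum) is then determined by the pair $(\alpha,\beta)$, of which there are at most $m_a m_b$, so there are at most $m_a m_b$ possible limit values.

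For (iii), I would use that $\sin(2\pi\,\cdot)$ restricted to $[0,1)$ is two-to-one apart from the points $y = 0, 1/4, 3/4$ where it is one-to-one (and $1/2$): more precisely, each value $t \in (-1,1)$ of $\sin(2\pi y)$ has at most two preimages in $[0,1)$, and $t = \pm 1$ has one. So if $\{\sin(2\pi y_n)\}$ has $m$ limit points, then $\{y_n\}$ has limit points lying in the union of the preimage sets of those $m$ values together with possibly the endpoint $0$ (a limit point of $y_n \in [0,1)$ may approach $1$, whose $\sin$-value coincides with that at $0$); counting generously gives at most $2m + 1$. I would phrase this as: any limit point $y^*$ of $\{y_n\}$ satisfies $\sin(2\pi y^*)$ is a limit point of $\{\sin(2\pi y_n)\}$ by continuity, hence lies in a set of $\le m$ values, and each such value pulls back to $\le 2$ points in $[0,1]$; intersecting with $[0,1)$ and accounting for the identification of $0$ and $1$ yields the bound $2m+1$. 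Finally (iv) follows by combining: the map $x \mapsto \fr{ax+b}$ is continuous except at the (locally finite) jump set where $ax+b \in \bZ$; away from those points a limit point $x^*$ of $\{x_n\}$ gives a limit point $\fr{ax^*+b}$ of $\{\fr{ax_n+b}\}$, while at a jump point the two one-sided limits of $\fr{ax+b}$ differ, contributing at most one extra limit value per such $x^*$. Since $\{x_n\}$ has $m$ limit points, at most one of which can be a jump point per ``branch'', the total is $\le m+1$.

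The main obstacle is the bookkeeping in (iii) and (iv): one must be careful that a limit point $y^*$ of a sequence in the half-open interval $[0,1)$ can equal $1$ in the closure, and that $\fr{\cdot}$ is discontinuous exactly on $\bZ$, so the naive ``continuity transfers limit points'' argument overcounts or undercounts by the boundary contributions. I expect the cleanest route is to pass to the closed interval, use continuity of $\sin(2\pi\,\cdot)$ there, and then re-identify endpoints at the end; and for (iv) to note explicitly that $x \mapsto \fr{ax+b}$ has, on any bounded interval, finitely many discontinuities, each contributing at most one additional limit point beyond the value predicted by continuity, so one extra over all of them suffices since a single limit point $x^*$ sits at a single location. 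Parts (i) and (ii) are routine diagonal/subsequence extractions and should be stated briefly.
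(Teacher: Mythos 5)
The paper states Lemma~\ref{lem:limit} without proof (it is used as a collection of elementary facts), so there is no in-paper argument to compare routes with; judged on its own terms, your plan is correct for (i)--(iii). The subsequence extractions in (i) and (ii) are exactly what is needed (in (i) the limit point $a^{(j)}$ of the refined subsequence lies in $\ol{\bD_{\eps}(a_{n_k})}$ for large $k$, so the continuity hypothesis applies, and distinctness of the $a^{(j)}$ follows from distinctness of their images). In (iii), despite the slightly garbled remark about ``the endpoint $0$'', your second formulation is right: every limit point of $\{y_n\}$ lies in $[0,1]$ and its image under $\sin(2\pi\,\cdot)$ is one of the $m$ limit values; each value has at most two preimages in $[0,1)$, and the only possible additional limit point is $1$ (which occurs only when $0$ is among the limit values), giving $2m+1$.

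Part (iv) is where your justification has a genuine gap. You count ``at most one extra limit value per such $x^*$'' at jump points and then assert that ``at most one of which can be a jump point per branch''; that assertion is unexplained and false in general: nothing prevents several limit points $x^{(1)},\dots,x^{(k)}$ of $\{x_n\}$ from satisfying $a x^{(j)}+b\in\bZ$ (take $a=1$, $b=0$ and $\{x_n\}$ accumulating at two distinct integers), and ``one extra per such point'' would then give the bound $m+k$ with $k\ge 2$, not $m+1$. The stated bound is nevertheless correct, and the repair is to note that the one-sided limit values of the fractional part at \emph{every} jump are the same universal pair: if $x_{n_k}\to x^{(j)}$ with $a x^{(j)}+b\in\bZ$, then any limit of $\fr{a x_{n_k}+b}$ lies in $\{0,1\}$, independently of $j$. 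Consequently the set of limit points of $\{\fr{a x_n+b}\}_{n\in\bZ}$ is contained in $\{\fr{a x^{(j)}+b}\,:\, a x^{(j)}+b\notin\bZ\}\cup\{0,1\}$ (the second set entering only if some $a x^{(j)}+b\in\bZ$), whose cardinality is at most $m$ if $k=0$ and at most $(m-k)+2\le m+1$ if $k\ge 1$. Replace the ``branch'' sentence by this explicit count and (iv) is complete.
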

The following result of Diophantine approximation nature plays crucial role in treating the tricky case of $b_2/b_1 \notin \bQ$.
\begin{lemma} \label{lem:sin.weyl}
Let $b_1,b_2 \in \bR \setminus \{0\}$ and $b_2/b_1 \notin \bQ$. Further, let $\{\alp_n\}_{n \in \bZ} \subset \bR$ be an incompressible sequence such that
\begin{equation} \label{eq:card.alpn>}
\card\{n \in \bZ : |\alp_n| \le M\} \ge \gamma M,
\qquad M \ge M_0,
\end{equation}
for some $\gamma, M_0 > 0$.
Then one of the sequences $\{\sin(b_1 \alp_n)\}_{n \in \bZ}$ and $\{\sin(b_2 \alp_n)\}_{n \in \bZ}$
has an infinite set of limit points.
\end{lemma}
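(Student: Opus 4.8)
The plan is to argue by contradiction: assume that \emph{both} sequences $\curl{\sin(b_1\alp_n)}_{n\in\bZ}$ and $\curl{\sin(b_2\alp_n)}_{n\in\bZ}$ have finite sets of limit points and derive $\gamma\le 0$. First I would pass to fractional parts. Since $\sin$ is $2\pi$-periodic, putting $y_{jn}:=\fr{b_j\alp_n/(2\pi)}\in[0,1)$ we have $\sin(b_j\alp_n)=\sin(2\pi y_{jn})$, so Lemma~\ref{lem:limit}(iii) shows that each $\curl{y_{jn}}_{n\in\bZ}$ has only finitely many limit points (the set being non-empty by Bolzano--Weierstrass). Denote these limit points by $u_1,\dots,u_k$ for $j=1$ and by $v_1,\dots,v_l$ for $j=2$, regarded as points of the circle $\bR/\bZ$. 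A routine compactness argument then gives, for every $\eps>0$, an index $N_\eps\in\bN$ such that for all $|n|>N_\eps$ the point $y_{1n}$ lies within distance $\eps$ of $\curl{u_1,\dots,u_k}$ and $y_{2n}$ within distance $\eps$ of $\curl{v_1,\dots,v_l}$ on $\bR/\bZ$.

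Next I would extract a Diophantine constraint on the integers $p_n:=\floor{b_1\alp_n/(2\pi)}$. With $\beta:=b_2/b_1\notin\bQ$ and $b_1\alp_n/(2\pi)=p_n+y_{1n}$, one has $b_2\alp_n/(2\pi)=\beta(p_n+y_{1n})$ and hence $\fr{\beta p_n}=\fr{y_{2n}-\beta y_{1n}}$. Consequently, for $|n|>N_\eps$ the point $\fr{\beta p_n}$ belongs to the $(1+|\beta|)\eps$-neighbourhood $E_\eps\subset\bR/\bZ$ of the finite set $\curl{v_s-\beta u_r:\ 1\le r\le k,\ 1\le s\le l}$; viewed inside $[0,1]$, the set $E_\eps$ is a union of at most $2kl$ intervals of total length $\le 4kl(1+|\beta|)\eps=:\delta(\eps)$, and $\delta(\eps)\to0$ as $\eps\to0$.

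Then comes the counting step, which I expect to be the main obstacle. On the one hand, incompressibility of $\curl{\alp_n}_{n\in\bZ}$ provides a \emph{uniform} bound: each integer value is attained by $p_n$ for at most $D=D(b_1,d)$ indices $n$, since $\curl{n:p_n=p}$ lives in an $\alp$-interval of length $2\pi/|b_1|$, which is covered by finitely many (depending only on $b_1$) unit intervals, each carrying at most $d$ of the $\alp_n$. On the other hand, $|\alp_n|\le M$ forces $|p_n|\le M':=\lceil|b_1|M/(2\pi)\rceil$, whence
\begin{equation*}
 \card\curl{n\in\bZ:\ |\alp_n|\le M}
 \ \le\ (2N_\eps+1)\ +\ D\cdot\card\curl{p\in\bZ:\ |p|\le M',\ \fr{\beta p}\in E_\eps}.
\end{equation*}
Applying Weyl's equidistribution estimate (Lemma~\ref{lem:weyl}) to each of the at most $2kl$ intervals constituting $E_\eps$ and summing, the last cardinality is $\le 2(\delta(\eps)+\eps)M'$ once $M'$, hence $M$, exceeds a threshold depending only on $E_\eps$. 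Combining this with the hypothesis $\card\curl{n:|\alp_n|\le M}\ge\gamma M$, dividing by $M$ and letting $M\to\infty$ gives $\gamma\le D|b_1|(\delta(\eps)+\eps)/\pi$; letting $\eps\to0$ then forces $\gamma\le0$, a contradiction. Hence at least one of $\curl{\sin(b_1\alp_n)}_{n\in\bZ}$, $\curl{\sin(b_2\alp_n)}_{n\in\bZ}$ has an infinite set of limit points.

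The genuinely delicate point is the order of the limiting operations in the counting step: Lemma~\ref{lem:weyl} is valid only for $M'$ beyond a threshold that depends on $\eps$ through the endpoints of the intervals in $E_\eps$, so one must first send $M\to\infty$ with $\eps$ fixed and only afterwards let $\eps\to0$. It is also essential that the fibre bound $D$ be independent of both $\eps$ and $M$ --- this is precisely where the incompressibility hypothesis is used, rather than merely the density lower bound~\eqref{eq:card.alpn>}.
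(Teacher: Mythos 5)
Your proposal is correct and follows essentially the same route as the paper's proof: argue by contradiction, pass to fractional parts via Lemma~\ref{lem:limit}(iii), show that $\fr{\beta p_n}$ is eventually confined to an $\eps$-neighbourhood of a finite set, and then combine the Weyl estimate of Lemma~\ref{lem:weyl} with the incompressibility fibre bound and the density hypothesis~\eqref{eq:card.alpn>} to force $\gamma \le 0$. The only differences are cosmetic: the paper obtains the finite limit set of $\curl{\fr{\beta k_n}}_{n\in\bZ}$ abstractly through Lemma~\ref{lem:limit}(ii),(iv) instead of writing it explicitly as $\curl{v_s-\beta u_r}$, and in your version one should note that multiplying circle representatives by $\beta$ requires enlarging that set to $\curl{v_s-\beta u_r-\beta m:\ m\in\{-1,0,1\}}$ (or working with real-line limit points in $[0,1]$), which only changes the bounded constants and not the argument.
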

\begin{proof}
Assume the contrary. Namely, let
$$
\nlim \curl{\sin(b_1 \alp_n)}_{n \in \bZ} = m_1 \in \bN
\qquad\text{and}\qquad
\nlim \curl{\sin(b_2 \alp_n)}_{n \in \bZ} = m_2 \in \bN.
$$
Let us set
\begin{equation} \label{eq:psi1}
 b_1 \alp_n = 2 \pi (k_n + \delta_n), \qquad
 k_n := \floor{\frac{b_1 \alp_n}{2 \pi}} \in \bZ, \quad
 \delta_n = \fr{\frac{b_1 \alp_n}{2 \pi}} \in [0, 1).
\end{equation}
It is clear that $\sin (2 \pi \delta_n) = \sin(b_1 \alp_n)$. Hence by Lemma~\ref{lem:limit}(iii)
\begin{equation} \label{eq:nlim.deltan}
 \nlim \{\delta_n\}_{n \in \bZ} \le 2m_1+1.
\end{equation}
It is clear from~\eqref{eq:psi1} that
$$
b_2 \alp_n = 2 \pi \( \beta k_n + \beta \delta_n\),
\quad n \in \bZ, \qquad \beta := b_2/b_1 \notin \bQ.
$$
The same reasoning as above shows that
$$
\nlim \curl{u_n}_{n \in \bZ} \le 2m_2+1, \qquad
u_n := \fr{\beta k_n + \beta \delta_n}.
$$
Further, combining~\eqref{eq:nlim.deltan} with by Lemma~\ref{lem:limit}(iv) implies that
\begin{equation}
\nlim\curl{v_n}_{n \in \bZ} \le 2m_1+2, \quad\text{where}\quad
v_n := \fr{\beta \delta_n}, \quad n \in \bZ.
\end{equation}
Finally, note that $\fr{\beta k_n} = \fr{u_n - v_n}$, $n \in \bZ$. Hence by parts (ii) and (iv) of Lemma~\ref{lem:limit}
the sequence $\curl{\fr{\beta k_n}}_{n \in \bZ}$ has exactly $p \le (2m_2+1)(2m_1+2)+1$ limit points $0 \le x_1 < \ldots < x_p \le 1$.

Let $\eps > 0$ be fixed. Then there exists $N_{\eps} \in \bN$ such that
\begin{equation} \label{eq:beta.kn.in.Ieps}
 \fr{\beta k_n} \in \cI_{\eps} := [0,1) \cap
 \bigcup_{j=1}^p (x_j-\eps, x_j+\eps), \qquad |n| \ge N_{\eps}.
\end{equation}
Since $\beta \notin \bQ$, Lemma~\ref{lem:weyl} implies that
\begin{align} \label{eq:card.JepsM}
 \card(\cJ_{\eps,M}) & \le 6 p \eps M,
 \qquad M \ge M_{\eps}, \quad M \in \bN, \quad\text{where} \\
\label{eq:JepsM.def}
 \cJ_{\eps,M} & :=
 \{m \in \{-M, \ldots, M\} : \fr{\beta m} \in \cI_{\eps}\},
 \quad M \in \bN,
\end{align}
For $M_{\eps} := \max\bigl\{M_{x_j-\eps,x_j+\eps,\eps} : j \in \{1,\ldots, p\}\bigr\}$.

Let $M \in \bN$ and consider the set
$$
\cK_{\eps,M} := \curl{|n| \ge N_{\eps} : |k_n| \le M} \subset \bZ,
$$
It is clear from~\eqref{eq:psi1} and inequality $|[x]| < |x|+1$ that
$$
\cK_{\eps,M} \supset \curl{|n| \ge N_{\eps} : |\alp_n| \le \wt{M}},
\qquad \wt{M} := \frac{2 \pi (M-1)}{|b_1|}.
$$
Hence if $\wt{M} \ge M_0$ condition~\eqref{eq:card.alpn>} implies that
\begin{equation} \label{eq:card.KepsM>}
 \card(\cK_{\eps,M}) \ge \gamma \wt{M} - 2 N_{\eps} + 1 \ge \gamma_1 M, \qquad M \ge \wt{M}_{\eps},
\end{equation}
with $\gamma_1 := \pi \gamma |b_1^{-1}| > 0$ and some $\wt{M}_{\eps} \ge M_{\eps}$.
Condition~\eqref{eq:beta.kn.in.Ieps} and definition~\eqref{eq:JepsM.def} of $\cJ_{\eps,M}$ imply that for $n \in \cK_{\eps,M}$ we have $k_n \in \cJ_{\eps,M}$. Since $\curl{\alp_n}_{n \in \bZ}$ is incompressible then so is $\curl{k_n}_{n \in \bZ}$. Hence multiplicities $d_m := \card\curl{n \in \bZ : k_n = m}$ are bounded, $d_m \le d$, $m \in \bZ$, for some $d \in \bN$. Hence for every $m \in \cJ_{\eps,M}$ there are at most $d$ values of $n \in \cK_{\eps,M}$ for which $k_n = m$. Combining this observation with the estimate~\eqref{eq:card.JepsM} we arrive at
\begin{equation} \label{eq:card.KepsM<}
 \card\(\cK_{\eps,M}\) \le d \card\(\cJ_{\eps,M}\) \le 6 d p \eps M.
\end{equation}
Now picking $\eps > 0$ such that that $6 d p \eps < \gamma_1$ and $M > \wt{M}_{\eps}$ we see that cardinality estimates~\eqref{eq:card.KepsM>} and~\eqref{eq:card.KepsM<} contradict to each other, which finishes the proof.
\end{proof}
\begin{remark}
It is clear from the proof of Lemma~\ref{lem:sin.weyl} that the statement remains valid if we relax condition~\eqref{eq:card.alpn>} to only hold for $M \in \cM \subset \bN$, where $\cM$ is some fixed unbounded subset of $\bN$.
\end{remark}
To apply Lemma~\ref{lem:sin.weyl} we first need to establish property~\ref{lem:sin.weyl} for the sequence $\{\Re \l_n^0\}_{n \in \bZ}$. It easily follows from the asymptotic formula~\eqref{eq:lam.n=an+o1}.
\begin{lemma} \label{lem:density}
Let the boundary conditions~\eqref{eq:cond} be regular. Then for every $\eps > 0$ there exists $N_{\eps} > 0$ such that
\begin{equation} \label{eq:card.Re.ln0}
\card \left\{ n \in \bZ : |\Re \l_n^0| \le N \right\} \ge \frac{N}{\sigma + \eps}, \quad N \ge N_{\eps},
\qquad \sigma := \frac{\pi}{b_2-b_1} > 0.
\end{equation}
\end{lemma}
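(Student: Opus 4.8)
The plan is to derive the bound~\eqref{eq:card.Re.ln0} directly from the asymptotic formula~\eqref{eq:lam.n=an+o1} established in Proposition~\ref{prop:sine.type}. That formula says the sequence of zeros of $\Delta_0(\cdot)$ can be ordered so that $\Re \l_n^0 = \frac{2\pi n}{b_2-b_1}(1+o(1)) = 2\sigma n (1 + o(1))$ as $n \to \infty$, where $\sigma := \pi/(b_2-b_1) > 0$. The point is simply that the real parts of the eigenvalues grow \emph{linearly} in $n$, with asymptotic slope $2\sigma$, and hence within any window $\{|\Re \l_n^0| \le N\}$ we must capture roughly $N/\sigma$ indices.

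Here is how I would carry it out. Fix $\eps > 0$. By~\eqref{eq:lam.n=an+o1} there is an $n_1 \in \bN$ such that for all $|n| > n_1$ we have $\bigl|\Re \l_n^0 - 2\sigma n\bigr| \le \eps' |n|$, where $\eps' > 0$ is chosen small enough (depending on $\eps$, $\sigma$) that $2\sigma + \eps' < 2(\sigma + \eps)$, i.e. $\eps' < 2\eps$. Then for $|n| > n_1$,
\begin{equation}
 |\Re \l_n^0| \le (2\sigma + \eps')|n| < 2(\sigma+\eps)|n|.
\end{equation}
Consequently, if $|n| \le N/(2(\sigma+\eps))$ and $|n| > n_1$, then $|\Re \l_n^0| < N$, so such an index $n$ belongs to the set on the left of~\eqref{eq:card.Re.ln0}. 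The number of integers $n$ with $|n| \le N/(2(\sigma+\eps))$ is at least $N/(\sigma+\eps) - 1$ (counting both signs, roughly $2 \cdot N/(2(\sigma+\eps))$), and we lose at most $2n_1+1$ of them to the constraint $|n| \le n_1$. Hence
\begin{equation}
 \card\{n \in \bZ : |\Re \l_n^0| \le N\} \ge \frac{N}{\sigma+\eps} - 1 - (2n_1+1).
\end{equation}
Since the right-hand side exceeds $N/(\sigma + 2\eps)$, say, once $N$ is large enough, a final adjustment of the $\eps$ (replacing $2\eps$ by $\eps$ from the outset) and a choice of $N_\eps$ absorbing the additive constants $2n_1+2$ yields exactly~\eqref{eq:card.Re.ln0}.

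There is essentially no serious obstacle here; the lemma is a bookkeeping consequence of linear growth of $\Re \l_n^0$. The only mild subtlety is that the ordering in~\eqref{eq:lam.n=an+o1} is of $o(1)$-multiplicative type rather than additive, so one must be slightly careful to convert $\Re \l_n^0 = 2\sigma n(1+o(1))$ into a two-sided linear bound valid for all large $|n|$ and to track how the error term and the finitely many exceptional small indices affect the counting constant; choosing the auxiliary $\eps'$ strictly smaller than $2\eps$ and enlarging $N_\eps$ takes care of both. Note also that this is precisely the hypothesis needed to invoke Lemma~\ref{lem:sin.weyl} with $\{\alp_n\} = \{\Re \l_n^0\}$ and $\gamma = 1/(2(\sigma+\eps))$ (the incompressibility required there is already furnished by Proposition~\ref{prop:sine.type}), which is the role this lemma plays in the sequel.
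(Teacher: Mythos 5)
Your proposal is correct and follows essentially the same route as the paper: use the asymptotic formula~\eqref{eq:lam.n=an+o1} to obtain a linear bound $|\Re \l_n^0| \le (2\sigma + \eps')|n|$ for all large $|n|$, conclude that all integers $n$ in a window of length proportional to $N$ (minus finitely many exceptional indices) lie in $\{n : |\Re \l_n^0| \le N\}$, and absorb the additive constants by slightly shrinking the slope and enlarging $N_\eps$. The only difference is cosmetic bookkeeping in how the auxiliary $\eps$ is split, which the paper handles by the chain $\frac{N}{\sigma+\eps/2} - 2n_\eps \ge \frac{N}{\sigma+\eps}$ with an explicit $N_\eps$.
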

\begin{proof}
Asymptotic formula~\eqref{eq:lam.n=an+o1} for $\{\l_n^0\}_{n \in \bZ}$ implies that $|\Re \l_n^0| \le (2 \sigma + \eps) |n|$, $|n| \ge n_{\eps}$, for some $n_{\eps} \in \bN$. Hence
$$
\bZ \cap \(\[-\frac{N}{2\sigma + \eps}, -n_{\eps}\] \cup \[n_{\eps}, \frac{N}{2\sigma + \eps}\]\) \subset \cI_N := \left\{ n \in \bZ : |\Re \l_n^0| \le N \right\},
$$
for $N \ge (2\sigma+\eps) n_{\eps}$. Taking cardinalities in this inclusion implies
\begin{equation}
\card \cI_N \ge 2 \(\floor{\frac{N}{2\sigma + \eps}} - n_{\eps} + 1\)
\ge \frac{N}{\sigma + \eps/2} - 2 n_{\eps} \ge \frac{N}{\sigma + \eps},
\qquad N \ge N_{\eps},
\end{equation}
with $N_{\eps} := 2(\sigma/\eps+1)(2\sigma+\eps) n_{\eps}$.
\end{proof}
Combining two previous results leads to the following important property of zeros of characteristic determinant $\Delta_0(\cdot)$, which coincides with Proposition~\ref{prop:nlim.inf.intro} and is formulated again for reader's convenient.
\begin{proposition} \label{prop:nlim.inf}
Let $b_2/b_1 \notin \bQ$ and boundary conditions~\eqref{eq:cond.canon.intro} be regular, i.e. $u := ad-bc \ne 0$. Let $\{\l_n^0\}_{n \in \bZ}$ be the sequence of zeros of the characteristic determinant $\Delta_0(\cdot)$ counting multiplicity. Then each of the sequences $\{\exp(i b_1 \l_n^0)\}_{n \in \bZ}$ and $\{\exp(i b_2 \l_n^0)\}_{n \in \bZ}$ has infinite set of limit points.
\end{proposition}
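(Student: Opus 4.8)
The plan is to derive Proposition~\ref{prop:nlim.inf} by combining the two lemmas just established, Lemma~\ref{lem:density} and Lemma~\ref{lem:sin.weyl}, together with the basic description of $\Delta_0(\cdot)$ and its zeros from Section~\ref{sec:unperturb}. First I would argue by contradiction: assume that at least one of the two sequences $\{e^{i b_1 \l_n^0}\}_{n \in \bZ}$ and $\{e^{i b_2 \l_n^0}\}_{n \in \bZ}$ has only finitely many limit points. Because of the bound $|\Im \l_n^0| \le h$ from Proposition~\ref{prop:sine.type} (equation~\eqref{eq:ln.in.Pih}), the imaginary parts $\{\Im \l_n^0\}_{n \in \bZ}$ form a bounded real sequence; passing to a subsequence along which $\Im \l_n^0$ converges, and noting $e^{i b_j \l_n^0} = e^{i b_j \Re \l_n^0} e^{-b_j \Im \l_n^0}$, I would reduce the question of limit points of $\{e^{i b_j \l_n^0}\}$ to the behaviour of the purely oscillatory factors $\{e^{i b_j \Re \l_n^0}\}$, hence ultimately to the limit points of $\{\sin(b_j \Re \l_n^0)\}_{n \in \bZ}$ and $\{\cos(b_j \Re \l_n^0)\}_{n \in \bZ}$.

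Next I would set $\alp_n := \Re \l_n^0$. By Proposition~\ref{prop:sine.type} the sequence $\{\alp_n\}_{n \in \bZ}$ is incompressible, and by Lemma~\ref{lem:density} it satisfies the lower density bound~\eqref{eq:card.alpn>} with $\gamma = (\sigma + \eps)^{-1}$ on an unbounded set of $M$ (in fact for all large $M$), which is exactly the hypothesis of Lemma~\ref{lem:sin.weyl}. Therefore Lemma~\ref{lem:sin.weyl} applies: one of the sequences $\{\sin(b_1 \alp_n)\}_{n \in \bZ}$, $\{\sin(b_2 \alp_n)\}_{n \in \bZ}$ has an infinite set of limit points. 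I would then feed this back into the contradiction hypothesis. If, say, $\{e^{i b_1 \l_n^0}\}_{n \in \bZ}$ had finitely many limit points, then along the chosen subsequence (where $\Im \l_n^0$ converges) $\{e^{i b_1 \alp_n}\}$ would have finitely many limit points, so both $\{\sin(b_1 \alp_n)\}$ and $\{\cos(b_1 \alp_n)\}$ would have finitely many limit points; by the same argument applied to the other index, assuming \emph{both} $\{e^{i b_1 \l_n^0}\}$ and $\{e^{i b_2 \l_n^0}\}$ had finitely many limit points would force both $\{\sin(b_1 \alp_n)\}$ and $\{\sin(b_2 \alp_n)\}$ to have finitely many limit points, contradicting Lemma~\ref{lem:sin.weyl}. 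This already shows that at least one of the two exponential sequences has infinitely many limit points; to get the statement for \emph{each} of them, I would invoke the relation~\eqref{eq:Delta_0_in_roots}, namely $(1 + a e_{1n})(1 + d e_{2n}) = b c\, e_{1n} e_{2n}$ with $e_{1n} = e^{i b_1 \l_n^0}$, $e_{2n} = e^{-i b_2 \l_n^0}$: when $bc \ne 0$ this identity, combined with the uniform bounds $e_{jn} \asymp 1$ and $1 + a e_{1n} \asymp 1$, $1 + d e_{2n} \asymp 1$ from Lemma~\ref{lem:ln0.exp.asymp}, lets one recover a convergent subsequence of $\{e_{2n}\}$ from a convergent subsequence of $\{e_{1n}\}$ and vice versa (solving $1 + d e_{2n} = bc\, e_{1n} e_{2n}/(1 + a e_{1n})$ for $e_{2n}$ as a continuous function of $e_{1n}$ away from its zeros), so the two sequences have the same (in particular equal cardinality of) sets of limit points. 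When $bc = 0$, then $ad \ne 0$ and $\Delta_0(\l) = e^{i b_2 \l}(1 + a e^{i b_1 \l})(1 + d e^{-i b_2\l})$, so the zero set splits into the two explicit arithmetic progressions~\eqref{eq:l1n.l2n.bc=0}, and a direct computation shows each of $\{e^{i b_1 \l_n^0}\}$, $\{e^{i b_2 \l_n^0}\}$ has infinitely many limit points (on each progression one of the two exponentials is equidistributed on a circle when the relevant ratio is irrational, which holds here since $b_2/b_1 \notin \bQ$).

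The main obstacle I anticipate is the passage, in the generic case $|a| + |d| > 0$ and $bc \ne 0$, from "one exponential sequence has infinitely many limit points" to "both do": one must make precise that the functional relation~\eqref{eq:Delta_0_in_roots} transports limit points in both directions, which requires the non-vanishing and boundedness estimates of Lemma~\ref{lem:ln0.exp.asymp} to guarantee that the relevant rational map is continuous on the closure of the range of the sequences and never forces a degenerate (constant) limit. A secondary technical point is bookkeeping the subsequence along which $\Im \l_n^0$ converges — one should note that finiteness of the limit-point set of $\{e^{i b_j \l_n^0}\}_{n \in \bZ}$ is inherited by every subsequence, so no generality is lost by restricting to such a subsequence, and that the incompressibility and density bounds~\eqref{eq:card.alpn>} are about $\Re \l_n^0$ and are unaffected by this restriction only if one restricts after applying Lemma~\ref{lem:sin.weyl}; the cleanest route is to apply Lemma~\ref{lem:sin.weyl} first to the full sequence $\{\Re \l_n^0\}_{n \in \bZ}$ and only then extract the $\Im$-convergent subsequence inside the resulting infinite limit-point set. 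Everything else is routine continuity and compactness reasoning of the kind packaged in Lemma~\ref{lem:limit}.
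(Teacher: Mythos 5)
Your proposal is correct and takes essentially the same route as the paper: the same case split ($bc=0$ handled via the explicit progressions~\eqref{eq:l1n.l2n.bc=0} and Kronecker's theorem, $bc\ne 0$ via incompressibility, Lemma~\ref{lem:density} and Lemma~\ref{lem:sin.weyl}), and the same transfer between $\{e_{1n}\}$ and $\{e_{2n}\}$ through the relation $\Delta_0(\l_n^0)=0$, i.e. $e_{2n}=-\frac{1+a e_{1n}}{d+u e_{1n}}$, justified by the bounds of Lemma~\ref{lem:ln0.exp.asymp}. The only difference is organizational: the paper assumes one exponential sequence has finitely many limit points, pushes that finiteness through the M\"obius relation and through $\sin(b_j\alp_n)=\Im e_{jn}/|e_{jn}|$ (which also makes your subsequence bookkeeping for $\Im\l_n^0$ unnecessary) to contradict Lemma~\ref{lem:sin.weyl} directly, whereas you first conclude that at least one sequence has infinitely many limit points and then transfer infinitude --- the same mechanism, so no gap.
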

\begin{proof}
\textbf{(i)} First, let $bc=0$. Then according to the proof of Lemma~\ref{lem:ln0.exp.asymp}, zeros of the characteristic determinant $\Delta_0(\cdot)$ are simple and split into two separated arithmetic progressions $\L_0^1 = \{\l_{1,n}^{0}\}_{n \in \bZ}$ and $\L_0^2 = \{\l_{2,n}^{0}\}_{n \in \bZ}$ given by~\eqref{eq:l1n.l2n.bc=0}. Let $k \in \{1,2\}$ and $j=2/k$. Since $E(z) = e^{2 \pi i z}$ is periodic with period 1, we have for $n \in \bZ$,
\begin{equation}
\exp(i b_k \l_{j,n}^0) = \exp\(2 \pi i n b_k/b_j + \omega_{k,j,a,d}\)
= \exp\(2 \pi i \fr{n b_k/b_j} + \omega_{k,j,a,d}\),
\end{equation}
where $\omega_{k,j,a,d}$ is an explicit constant that can be derived from~\eqref{eq:l1n.l2n.bc=0}. Since $b_k/b_j \notin \bQ$, then by the classical Kronecker theorem, the sequence $\curl{\fr{n b_k/b_j}}_{n \in \bZ}$ is everywhere dense on $[0,1]$. This implies that the sequence $\{\exp(i b_k \l_{j,n}^0)\}_{n \in \bZ}$ has infinite set of limit points, which finishes the proof in this case.

\textbf{(ii)} Now, let $bc \ne 0$ and assume the contrary: one of the sequences $\{\exp(i b_1 \l_n^0)\}_{n \in \bZ}$ and $\{\exp(i b_2 \l_n^0)\}_{n \in \bZ}$ has a finite set of limit points. For definiteness assume that
\begin{equation} \label{eq:nlim.e1n}
 \nlim \{\exp(i b_1 \l_n^0)\}_{n \in \bZ} = m_1 \in \bN.
\end{equation}
Recall that $e_{1n} := \exp(i b_1 \l_n^0)$ and $e_{2n} := \exp(-i b_2 \l_n^0)$, $n \in \bZ$, and also set
\begin{equation} \label{eq:ln0=an+ibn}
 \l_n^0 = \alp_n + i \beta_n, \qquad \alp_n := \Re \l_n^0,
 \quad \beta_n := \Im \l_n^0, \qquad n \in \bZ.
\end{equation}
It is clear that
\begin{equation} \label{eq:abs.e1n}
 |e_{1n}| = |\exp(i b_1 \l_n^0)| = \exp(-b_1 \Im \l_n^0)
 = \exp(-b_1 \beta_n), \qquad n \in \bZ
\end{equation}
It follows from~\eqref{eq:abs.e1n}, \eqref{eq:nlim.e1n}, \eqref{eq:e.bj.ln.asymp.1} and Lemma~\ref{lem:ln0.exp.asymp}(i), applied with $f_1(z) = -b_1^{-1} \log |z|$, that
\begin{equation} \label{eq:nlim.Imln0}
 \nlim \{\beta_n\}_{n \in \bZ} =
 \nlim \{-b_1^{-1} \log |e_{1n}| \}_{n \in \bZ} \le m_1.
\end{equation}
In turn, since $e_{1n} = |e_{1n}| e^{i b_1 \alp_n} \asymp 1$, $n \in \bZ$,
then by Lemma~\ref{lem:limit}(i) applied with $f_2(z) = \Im z / |z|$ we have
\begin{equation} \label{eq:nlim.sinb1}
 \nlim \curl{\sin\(b_1 \alp_n\)}_{n \in \bZ} =
 \nlim \curl{\Im e_{1n} / |e_{1n}|}_{n \in \bZ} \le m_1.
\end{equation}
Since $bc \ne 0$ and boundary conditions~\eqref{eq:cond.canon} are regular, then Lemma~\ref{lem:ln0.exp.asymp}(i) implies~\eqref{eq:1+ae1.1+de2}. Recall that $u := ad-bc \ne 0$. Since $\Delta(\l_n^0)=0$, $1 + a e_{1n} \ne 0$ and $1 + d e_{2n} \ne 0$, $n \in \bZ$, it follows from~\eqref{eq:Delta0.new} that for $n \in \bZ$:
\begin{equation} \label{eq:e1.via.e2}
 1 + d e_{2n} + a e_{1n} + u e_{1n} e_{2n} = 0, \qquad
 e_{2n} = - \frac{1 + a e_{1n}}{d + u e_{1n}}, \qquad
 e_{1n} = - \frac{1 + d e_{2n}}{a + u e_{2n}}.
\end{equation}
Since $e_{1n} \asymp 1$, $e_{2n} \asymp 1$, $n \in \bZ$, relations~\eqref{eq:1+ae1.1+de2} and~\eqref{eq:e1.via.e2} imply that
\begin{equation} \label{eq:a+ue2.d+ue1}
 d + u e_{1n} \asymp 1, \qquad a + u e_{2n} \asymp 1, \qquad n \in \bZ.
\end{equation}
Hence $f_3(z) := - \frac{1 + a z}{d + u z}$ is continuous in the neighborhood of $\{e_{1n}\}_{n \in \bZ}$. Combining this with Lemma~\ref{lem:limit}(i), the second identity in~\eqref{eq:e1.via.e2} and relation~\eqref{eq:nlim.e1n} we arrive at
\begin{equation} \label{eq:nlim.e2n}
 \nlim \{e_{2n}\}_{n \in \bZ} = \nlim \{f_3(e_{1n})\}_{n \in \bZ} \le m_1.
\end{equation}
Similarly to~\eqref{eq:nlim.sinb1} we get
\begin{equation} \label{eq:nlim.sinb2}
 \nlim \curl{\sin\(b_2 \alp_n\)}_{n \in \bZ} =
 \nlim \curl{\Im e_{2n} / |e_{2n}|}_{n \in \bZ} \le m_1.
\end{equation}
Since boundary conditions~\eqref{eq:cond.canon} are regular then Proposition~\ref{prop:sine.type} implies that the sequence $\{\alp_n\}_{n \in \bZ}$ is incompressible and Lemma~\ref{lem:density} implies the estimate~\eqref{eq:card.Re.ln0}, which in turn yields the estimate~\eqref{eq:card.alpn>} for $\{\alp_n\}_{n \in \bZ}$ with $\gamma = \frac{1}{2\sigma} = \frac{b_2-b_1}{2\pi}$. Since $b_2/b_1 \notin \bQ$ then by Lemma~\ref{lem:sin.weyl} one of sequences $\curl{\sin(b_1 \alp_n)}_{n \in \bZ}$ and $\curl{\sin(b_2 \alp_n)}_{n \in \bZ}$ has infinite set of limit points. This contradicts relations~\eqref{eq:nlim.sinb1} and~\eqref{eq:nlim.sinb2} and finishes the proof.
\end{proof}
\section{Bari $c_0$-property of the system of root vectors of the unperturbed operator} \label{sec:bari.c0}

In this section assuming boundary conditions~\eqref{eq:cond.canon} to be strictly regular, we show that the system of root vectors of the operator $L_U(0)$ is a Bari $c_0$-sequence in $\LLV{2}$ if and only if the operator $L_U(0)$ is selfadjoint. Since eigenfunctions of $L_U(0)$ in their ``natural form'' are not normalized we need the following simple practical criterion of Bari $c_0$-property.
\begin{lemma} \label{lem:crit.bari.not.norm}
Let $\fF = \{f_n\}_{n \in \bZ}$ be a complete minimal system of vectors in a Hilbert space $\fH$. Let also $\{g_n\}_{n \in \bZ}$ be ``almost biorthogonal'' to $\fF$. Namely, $(f_n, g_m) = 0$, $n \ne m$, $(f_n, g_n) \ne 0$, $n, m \in \bZ$. Then the normalized system
$$
\fF' := \curl{f_n'}_{n \in \bZ}, \quad f_n' := \frac{1}{\|f_n\|} f_n,
\quad n \in \bZ,
$$
is a Bari $c_0$-sequence in $\fH$ (see Definition~\ref{def:bari.c0}) if and only if
\begin{equation} \label{eq:sum.fn.gn-1}
 \frac{\|f_n\|_{\fH} \cdot \|g_n\|_{\fH}}{|(f_n, g_n)_{\fH}|} \to 1
 \quad\text{as}\quad n \to \infty.
\end{equation}
\end{lemma}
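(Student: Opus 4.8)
The plan is to reduce the Bari $c_0$-property of the normalized system $\fF'$ to the biorthogonality relations for the original (non-normalized) system $\fF$ and its ``almost biorthogonal'' companion $\fG=\{g_n\}$. First I would recover the genuine biorthogonal sequence of $\fF'$: since $(f_n,g_m)=0$ for $n\ne m$ and $(f_n,g_n)\ne0$, the system $g_n':=\dfrac{\|f_n\|}{\ol{(f_n,g_n)}}\,g_n$ satisfies $(f_n',g_m')=\delta_{nm}$, so $\fG'=\{g_n'\}$ is precisely the (unique) biorthogonal sequence to the complete minimal system $\fF'$. By Definition~\ref{def:bari.c0}, $\fF'$ is a Bari $c_0$-sequence if and only if $\|f_n'-g_n'\|_{\fH}\to0$ as $n\to\infty$.

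Next I would analyze $\|f_n'-g_n'\|_{\fH}^2$ directly. Write $c_n:=(f_n,g_n)_{\fH}$ and note $f_n'$ is a unit vector while $\|g_n'\|_{\fH}=\dfrac{\|f_n\|_{\fH}\,\|g_n\|_{\fH}}{|c_n|}=:\rho_n\ge1$ (the inequality $\rho_n\ge1$ being Cauchy--Schwarz). Moreover $(f_n',g_n')_{\fH}=1$ by biorthogonality, hence
\begin{equation}
 \|f_n'-g_n'\|_{\fH}^2 = \|f_n'\|_{\fH}^2 - 2\Re(f_n',g_n')_{\fH} + \|g_n'\|_{\fH}^2 = 1 - 2 + \rho_n^2 = \rho_n^2 - 1.
\end{equation}
Therefore $\|f_n'-g_n'\|_{\fH}\to0$ is equivalent to $\rho_n\to1$, which is exactly condition~\eqref{eq:sum.fn.gn-1}. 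This chain of equalities is the whole argument; there is essentially no analytic obstacle, only the bookkeeping of the normalization constants.

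The one point that genuinely needs care — and which I would state explicitly — is that $\fF'$ is complete and minimal (so that it has a unique biorthogonal sequence and Definition~\ref{def:bari.c0} applies): completeness of $\fF'$ is immediate since $\fF'$ and $\fF$ have the same closed span, and minimality follows because the explicit biorthogonal sequence $\fG'$ exhibited above shows no $f_n'$ lies in the closed span of the others. I would also remark that if one merely wants the forward direction (Bari $c_0$ $\Rightarrow$ \eqref{eq:sum.fn.gn-1}) one can avoid uniqueness of the biorthogonal sequence, but for the stated ``if and only if'' the uniqueness, guaranteed by minimality, is what lets us identify $\fG'$ with \emph{the} biorthogonal sequence appearing in Definition~\ref{def:bari.c0}. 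The main ``obstacle,'' such as it is, is thus purely one of making sure the normalization does not disturb completeness and minimality; the metric computation itself is a one-line expansion of $\|f_n'-g_n'\|_{\fH}^2$.
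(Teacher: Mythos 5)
Your proposal is correct and follows essentially the same route as the paper: identify the genuine biorthogonal sequence $g_n' = \frac{\|f_n\|}{\ol{(f_n,g_n)}}\,g_n$ of the normalized system and expand $\|f_n'-g_n'\|_{\fH}^2 = \|g_n'\|_{\fH}^2-1 = \frac{\|f_n\|_{\fH}^2\|g_n\|_{\fH}^2}{|(f_n,g_n)_{\fH}|^2}-1$, so that the Bari $c_0$-property is equivalent to~\eqref{eq:sum.fn.gn-1}. Your added remarks on the completeness and minimality of $\fF'$ (and hence uniqueness of its biorthogonal sequence) are a harmless elaboration of what the paper leaves implicit.
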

\begin{proof}
For brevity we set $\|\cdot\| := \|\cdot\|_{\fH}$ and $(\cdot,\cdot) := (\cdot,\cdot)_{\fH}$.
It is clear that for the system $\fG' := \{g_n'\}_{n \in \bZ}$ that is biorthogonal to $\fF'$ we have,
\begin{equation} \label{eq:fn'gm'}
 (f_n', g_m') = \delta_{nm}, \quad n,m \in \bZ; \qquad
 g_n' = \frac{\|f_n\|}{(f_n, g_n)} \cdot g_n, \quad n \in \bZ.
\end{equation}
Relations~\eqref{eq:fn'gm'} imply that
\begin{equation} \label{eq:|fn'-gn'|}
 \|f_n' - g_n'\|^2 = \|f_n'\|^2 - (f_n', g_n') - \ol{(f_n', g_n')}
 + \|g_n'\|^2 = \|g_n'\|^2 - 1
 = \frac{\|f_n\|^2 \cdot \|g_n\|^2}{|(f_n, g_n)|^2} - 1.
\end{equation}
Hence, systems $\fF'$ and $\fG'$ are $c_0$-close if and only if condition~\eqref{eq:sum.fn.gn-1} holds.
\end{proof}
The following simple property of compact operators with asymptotically simple spectrum will be also useful in the next section.
\begin{lemma} \label{lem:some.every}
Let $T$ be an operator with compact resolvent in a separable Hilbert space $\fH$ and let $\{\l_n\}_{n \in \bZ}$ be a sequence of its eigenvalues counting multiplicities. Let also $p \in [1,2]$.
Assume that for some $N \in \bN$ eigenvalues $\l_n$, $|n| \ge N$, are algebraically simple.
Then if some normalized system of root vectors of the operator $T$ is a Bari $(\ell^p)^*$-sequence in $\fH$ then every normalized system of root vectors of the operator $T$ is a Bari $(\ell^p)^*$-sequence in $\fH$.
\end{lemma}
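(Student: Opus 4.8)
The plan is to reduce the ``every'' statement to the ``some'' statement by observing that two normalized systems of root vectors of $T$ can differ only in finitely many vectors, up to unimodular scalar factors, because all but finitely many eigenvalues are algebraically simple. First I would fix a normalized system $\{f_n\}_{n \in \bZ}$ that is a Bari $(\ell^p)^*$-sequence in $\fH$, with biorthogonal system $\{g_n\}_{n \in \bZ}$, so that $\{\|f_n - g_n\|_{\fH}\}_{n \in \bZ} \in (\ell^p)^*$. Let $\{\wt f_n\}_{n \in \bZ}$ be any other normalized system of root vectors of $T$, with biorthogonal system $\{\wt g_n\}_{n \in \bZ}$. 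For $|n| \ge N$ the eigenvalue $\l_n$ is algebraically simple, so the root subspace $\cR_{\l_n}(T)$ is one-dimensional; hence $f_n$ and $\wt f_n$ span the same line, and since both are normalized we have $\wt f_n = c_n f_n$ for some $c_n \in \bC$ with $|c_n| = 1$ (after matching up the indexing of the two systems on the complement of a finite set, which is possible since both enumerate the same eigenvalues with multiplicity).

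Next I would track what happens to the biorthogonal systems under this change. Since biorthogonality is determined by $T$ alone (the biorthogonal system to a system of root vectors of $T$ is, up to normalization, a system of root vectors of the adjoint $T^*$, supported on the conjugate eigenvalues), and since for $|n| \ge N$ the relevant root subspaces of $T^*$ are also one-dimensional, the relation $\wt f_n = c_n f_n$ forces $\wt g_n = \ol{c_n}^{-1} g_n = c_n g_n$ for $|n| \ge N$; indeed $(\wt f_n, \wt g_n) = 1$ and $(\wt f_n, \wt g_m) = 0$ for $m \ne n$ pin down $\wt g_n$ uniquely once $\wt f_n$ is fixed. Then, for $|n| \ge N$,
\begin{equation}
\|\wt f_n - \wt g_n\|_{\fH} = \|c_n f_n - c_n g_n\|_{\fH} = |c_n| \, \|f_n - g_n\|_{\fH} = \|f_n - g_n\|_{\fH}.
\end{equation}
Thus the two sequences $\{\|\wt f_n - \wt g_n\|_{\fH}\}_{n \in \bZ}$ and $\{\|f_n - g_n\|_{\fH}\}_{n \in \bZ}$ coincide outside the finite set $\{|n| < N\}$, and membership in $(\ell^p)^*$ (which is $\ell^{p'}$ for $p > 1$ and $c_0$ for $p = 1$) is insensitive to finitely many terms, so $\{\|\wt f_n - \wt g_n\|_{\fH}\}_{n \in \bZ} \in (\ell^p)^*$ as well. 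This is exactly the assertion that $\{\wt f_n\}_{n \in \bZ}$ is a Bari $(\ell^p)^*$-sequence.

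The main obstacle, and the only point requiring care, is the bookkeeping at the finitely many non-simple eigenvalues: a root subspace $\cR_{\l_n}(T)$ of algebraic multiplicity $k > 1$ contributes $k$ basis vectors, and two different normalized systems may choose genuinely different (not merely rescaled) bases there, so the correspondence $f_n \leftrightarrow \wt f_n$ need not be a scalar multiple on that block. I would handle this by noting that there are only finitely many such indices (since $\l_n$ is simple for $|n| \ge N$), that for each finite-dimensional root subspace the corresponding block of the biorthogonal system lies in a fixed finite-dimensional space as well, and that all of $\{f_n\}$, $\{g_n\}$, $\{\wt f_n\}$, $\{\wt g_n\}$ are uniformly bounded in norm ($\|f_n\| = \|\wt f_n\| = 1$, and $\|g_n\|, \|\wt g_n\|$ bounded on the finite block); hence the finitely many terms $\|\wt f_n - \wt g_n\|_{\fH}$ with $|n| < N$ are finite and contribute nothing to the $(\ell^p)^*$-condition. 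So the argument is essentially a ``finitely many exceptional terms do not matter'' observation, combined with the one-dimensionality of root subspaces for large $|n|$, and there is no real analytic difficulty.
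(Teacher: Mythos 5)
Your proposal is correct and follows essentially the same route as the paper's proof: for $|n| \ge N$ the one-dimensionality of $\cR_{\l_n}(T)$ forces $\wt f_n = c_n f_n$ with $|c_n| = 1$, hence $\wt g_n = \ol{c_n}^{-1} g_n = c_n g_n$ and $\|\wt f_n - \wt g_n\|_{\fH} = \|f_n - g_n\|_{\fH}$, while the finitely many exceptional indices do not affect membership in $(\ell^p)^*$. Your extra care about matching indices and about the blocks of multiple eigenvalues is sound but not a departure from the paper's argument.
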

\begin{proof}
Let $\fF = \{f_n\}_{n \in \bZ}$
be a normalized system of root vectors of the operator $T$, which is a Bari $(\ell^p)^*$-sequence in $\fH$. By definition,
the system $\fF$ is complete and minimal in $\fH$. Let $\fG = \{g_n\}_{n \in \bZ}$ be its (unique) biorthogonal system. Further, let $\fF' = \{f_n'\}_{n \in \bZ}$ be any other normalized system of root vectors of the operator $T$. Since eigenvalue $\l_n$, $|n| \ge N$, is algebraically simple then $\dim(\cR_{\l_n}(T)) = 1$, $|n| \ge N$. Hence $f_n' = \alp_n f_n$, $|n| \ge N$, for some $\alp_n \in \bT := \{z \in \bC : |z|=1\}$. It is clear that $\fF'$ is also complete and minimal and for its biorthogonal sysyem $\fG' = \{g_n'\}_{n \in \bZ}$ we have that $g_n' = \ol{\alp_n^{-1}} g_n = \alp_n g_n$, $|n| \ge N$, since $|\alp_n|=1$. Hence $\|f_n'-g_n'\|_{\fH} = \|\alp_n \cdot (f_n-g_n)\|_{\fH} = \|f_n-g_n\|_{\fH}$, $|n| \ge N$. This implies that $\curl{\|f_n'-g_n'\|_{\fH}}_{n \in \bZ} = \curl{\|f_n-g_n\|_{\fH}}_{n \in \bZ} \in (\ell^p)^*$ and finishes the proof.
\end{proof}
\begin{remark} \label{rem:sa.c0.bari}
Let $A$ be a selfadjoint operator with compact resolvent. Then every its normalized system of root vectors is an orthonormal basis in $\LLV{2}$ and coincides with its biorthogonal sequence. This implies that every normalized system of root vectors of the operator $A$ is a Bari $c_0$-sequence.
\end{remark}
To study norms $\|f_n^0\|_2$ and $\|g_n^0\|_2$ of the eigenvectors of the operators $L_U(0)$ and $L_U^*(0)$, we first need to obtain some properties of simple integrals $\int_0^1 |e^{\pm 2 i b_j \l x}| dx$, $j \in \{1,2\}$, $\l \in \bC$.
\begin{lemma} \label{lem:ejx.ej.Ej}
Denote for $j \in \{1,2\}$ and $\l \in \bC$:
\begin{equation} \label{eq:Ejpm.def}
 E_{j}^{\pm}(\l) := \int_0^1 \abs{e^{\pm 2 i b_j \l x}} dx
 = \int_0^1 e^{\mp 2 b_j \Im \l x} dx
 = \frac{e^{\mp 2 b_j \Im \l} - 1}{\mp 2 b_j \Im \l}.
\end{equation}
Then the following estimate holds:
\begin{align}
\label{eq:Ej+Ej->1}
 & E_j^+(\l) E_j^-(\l) - 1 \ge \frac{(b_j \Im \l)^2}{3},
 \qquad j \in \{1,2\}, \quad \l \in \bC.
\end{align}
In particular, $E_j^+(\l) E_j^-(\l) - 1 > 0$ if $\Im \l \ne 0$.
\end{lemma}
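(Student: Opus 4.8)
The plan is to prove the inequality $E_j^+(\l) E_j^-(\l) - 1 \ge (b_j \Im \l)^2/3$ by reducing it to a single-variable estimate. First I would introduce the abbreviation $t := 2 b_j \Im \l \in \bR$, so that, by the explicit formulas in~\eqref{eq:Ejpm.def},
\begin{equation}
 E_j^+(\l) = \frac{e^{-t}-1}{-t} = \frac{1-e^{-t}}{t}, \qquad
 E_j^-(\l) = \frac{e^{t}-1}{t},
\end{equation}
with the obvious convention that both equal $1$ when $t=0$. Hence the product is the even function
\begin{equation}
 F(t) := E_j^+(\l) E_j^-(\l) = \frac{(e^{t}-1)(1-e^{-t})}{t^2}
 = \frac{e^{t}+e^{-t}-2}{t^2} = \frac{2(\cosh t - 1)}{t^2},
\end{equation}
and the claim~\eqref{eq:Ej+Ej->1} becomes $F(t) - 1 \ge t^2/12$ for all $t \in \bR$, since $(b_j \Im \l)^2 = t^2/4$.

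The key step is then the elementary power-series comparison. Using $\cosh t - 1 = \sum_{k \ge 1} t^{2k}/(2k)!$, one gets
\begin{equation} \label{eq:F.series}
 F(t) = \frac{2(\cosh t-1)}{t^2} = \sum_{k \ge 1} \frac{2\, t^{2k-2}}{(2k)!}
 = 1 + \frac{t^2}{12} + \sum_{k \ge 3} \frac{2\, t^{2k-2}}{(2k)!},
\end{equation}
where the $k=1$ term gives $1$ and the $k=2$ term gives $2t^2/4! = t^2/12$. Every remaining term is nonnegative, so $F(t) - 1 \ge t^2/12$ immediately, with strict inequality as soon as $t \ne 0$; this yields $E_j^+(\l)E_j^-(\l)-1 > 0$ whenever $\Im\l \ne 0$, proving the last assertion. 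Translating back via $t^2/12 = (b_j\Im\l)^2/3$ gives exactly~\eqref{eq:Ej+Ej->1}.

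I do not anticipate a genuine obstacle here; the only points requiring a line of care are handling the removable singularity at $t=0$ (where both $E_j^\pm$ equal $1$ and~\eqref{eq:F.series} holds by continuity of the power series) and noting that the formula is insensitive to the sign of $t$, hence to the sign of $b_j$ and of $\Im\l$. An alternative to the series argument, if one prefers to avoid infinite sums, is to set $G(t) := F(t) - 1 - t^2/12$ and check $G(0) = G'(0) = G''(0) = 0$ together with $G''(t) \ge 0$ for all $t$ (the latter again reducing to a convexity estimate for $\cosh$), but the series comparison in~\eqref{eq:F.series} is the cleanest route and is what I would write up.
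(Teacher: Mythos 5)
Your proposal is correct and follows essentially the same route as the paper: the paper also writes $E_j^{\pm}(\l) = f(\mp 2 b_j \Im\l)$ with $f(x) = (e^x-1)/x$ and bounds $f(x)f(-x) = (e^x+e^{-x}-2)/x^2 = 2\sum_{k\ge 1} x^{2k-2}/(2k)! \ge 1 + x^2/12$, which is exactly your series comparison after the substitution $t = 2 b_j \Im\l$. The only cosmetic difference is your explicit treatment of the removable singularity at $t=0$ and the alternative convexity remark, neither of which changes the argument.
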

\begin{proof}
Let $h \ge 0$. It is clear that
\begin{equation} \label{eq:Ej=f}
 E_j^{\pm}(\l) = f(\mp 2 b_j \Im \l), \quad\text{where}\quad
 f(x) := \frac{e^x - 1}{x} = 1 + \frac{x}{2} + O(x^2), \quad |x| < h.
\end{equation}
It follows from Taylor expansion of $e^x$ that for $x \in \bR$:
\begin{equation} \label{eq:fx.f-x>1}
 f(x)f(-x) = \frac{e^x - 1}{x} \cdot \frac{e^{-x} - 1}{-x}
 = \frac{e^x + e^{-x} - 2}{x^2}
 = 2 \sum_{k=1}^{\infty} \frac{x^{2k-2}}{(2k)!}
 \ge 1 + \frac{x^2}{12}.
\end{equation}
Estimate~\eqref{eq:Ej+Ej->1} now immediately follows from~\eqref{eq:Ej=f} and~\eqref{eq:fx.f-x>1}.

\end{proof}
First we establish the Bari $c_0$-property criterion in a special case $b=c=0$.
\begin{proposition} \label{prop:crit.bari.period}
Let boundary conditions~\eqref{eq:cond.canon} be strictly regular with $b=c=0$, i.e. they are of the form
\begin{equation} \label{eq:quasi.per.bc}
 y_1(0) + a y_1(1) = d y_2(0) + y_2(1) = 0, \qquad a d \ne 0.
\end{equation}
Then some normalized system of root vectors of the operator $L_U(0)$ is a Bari $c_0$-sequence in $\LLV{2}$ (see Definition~\ref{def:bari.c0}) if and only if $|a| = |d| = 1$.
\end{proposition}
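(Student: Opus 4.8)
The plan is to exploit the decoupled structure of the unperturbed problem when $b=c=0$ and to apply the practical criterion of Lemma~\ref{lem:crit.bari.not.norm}, since in this case both the eigenfunctions of $L_U(0)$ and those of $L_U^*(0)$ are available in closed form.

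First I would record the eigendata. By the proof of Lemma~\ref{lem:ln0.exp.asymp} the spectrum of $L_U(0)$ is the union of the two separated arithmetic progressions $\Lambda_0^1=\{\l_{1,n}^0\}_{n\in\bZ}$ and $\Lambda_0^2=\{\l_{2,n}^0\}_{n\in\bZ}$ of~\eqref{eq:l1n.l2n.bc=0}, all eigenvalues being algebraically simple. Since $Q=0$ the system~\eqref{eq:system} decouples ($y_1'=ib_1\l y_1$, $y_2'=ib_2\l y_2$), so, taking~\eqref{eq:quasi.per.bc} into account, the (unnormalized) eigenfunctions are
$$
 f_{1,n}^0=\col\bigl(e^{ib_1\l_{1,n}^0 x},\,0\bigr),\qquad
 f_{2,n}^0=\col\bigl(0,\,e^{ib_2\l_{2,n}^0 x}\bigr),\qquad n\in\bZ,
$$
the first family coming from the zeros of $1+ae^{ib_1\l}$ (which force $y_2\equiv0$), the second from the zeros of $d+e^{ib_2\l}$ (which force $y_1\equiv0$). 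By Lemma~\ref{lem:adjoint} the adjoint $L_U^*(0)=L_{U*}(0)$ has the boundary conditions $\ol a\,y_1(0)+y_1(1)=0$, $y_2(0)+\ol d\,y_2(1)=0$, and conjugating the relations $1+ae^{ib_1\l_{1,n}^0}=0$ and $d+e^{ib_2\l_{2,n}^0}=0$ shows that its eigenfunctions are $g_{1,n}^0=\col(e^{ib_1\ol{\l_{1,n}^0}x},0)$ and $g_{2,n}^0=\col(0,e^{ib_2\ol{\l_{2,n}^0}x})$. Writing $e^{ib_j\l_{j,n}^0 x}=c_j(x)e^{2\pi inx}$ with $c_j$ invertible in $L^\infty$ also shows in passing that $\{f_n^0\}_{n\in\bZ}$ is a Riesz basis in $\LLV{2}$, hence complete and minimal.

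Next I would verify that $\{g_n^0\}$ is ``almost biorthogonal'' to $\{f_n^0\}$ in the sense of Lemma~\ref{lem:crit.bari.not.norm}: inner products across the two families vanish (orthogonal components), while within one family $(f_{j,n}^0,g_{j,m}^0)_2=\int_0^1 e^{ib_j(\l_{j,n}^0-\l_{j,m}^0)x}\,dx=\delta_{nm}$, because $b_j(\l_{j,n}^0-\l_{j,m}^0)=2\pi(n-m)$ by~\eqref{eq:l1n.l2n.bc=0}. In particular $(f_n^0,g_n^0)_2=1$ for every $n$. Moreover, in the notation~\eqref{eq:Ejpm.def} one has $\|f_{j,n}^0\|_2^2=E_j^+(\l_{j,n}^0)$ and $\|g_{j,n}^0\|_2^2=E_j^-(\l_{j,n}^0)$, and by~\eqref{eq:l1n.l2n.bc=0} the quantities $b_1\Im\l_{1,n}^0=\ln|a|$ and $b_2\Im\l_{2,n}^0=-\ln|d|$ are independent of $n$. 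Hence the sequence in~\eqref{eq:sum.fn.gn-1} takes only the two values $\sqrt{E_j^+(\l_{j,\cdot}^0)E_j^-(\l_{j,\cdot}^0)}$, $j\in\{1,2\}$, each constant along its family, and by~\eqref{eq:Ej+Ej->1} each of these values is $\ge1$, with equality for family $j$ exactly when $\Im\l_{j,\cdot}^0=0$, i.e. when $|a|=1$ (for $j=1$) resp. $|d|=1$ (for $j=2$).

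Finally, since each of the families $\{f_{1,n}^0\}$, $\{f_{2,n}^0\}$ is infinite (and indexed over an unbounded set of $n$), the limit in~\eqref{eq:sum.fn.gn-1} runs through both of the above values; hence it equals $1$ if and only if both of them equal $1$, i.e. if and only if $|a|=|d|=1$. Lemma~\ref{lem:crit.bari.not.norm} then yields the assertion for the normalized system, Lemma~\ref{lem:some.every} upgrading ``some'' to ``every''; for the ``if'' direction one may alternatively note that $|a|=|d|=1$ (with $b=c=0$) is exactly condition~\eqref{eq:abcd.sa.intro}, so $L_U(0)$ is selfadjoint, and invoke Remark~\ref{rem:sa.c0.bari}. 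I do not expect a genuine obstacle here: the only points needing care are the almost-biorthogonality together with the correct pairing $f_n^0\leftrightarrow g_n^0$, and the bookkeeping of the two interleaved families, both of which become routine once the explicit eigenfunctions above are in hand.
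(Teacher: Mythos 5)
Your proposal is correct and follows essentially the same route as the paper: explicit decoupled eigenfunctions for $L_U(0)$ and $L_{U*}(0)$ drawn from the two arithmetic progressions~\eqref{eq:l1n.l2n.bc=0}, almost-biorthogonality, the criterion of Lemma~\ref{lem:crit.bari.not.norm}, the lower bound $E_j^+E_j^--1\ge (b_j\Im\l)^2/3$ of Lemma~\ref{lem:ejx.ej.Ej}, the constancy of $b_1\Im\l_{1,n}^0=\ln|a|$ and $b_2\Im\l_{2,n}^0=-\ln|d|$, and Lemma~\ref{lem:some.every}. The only cosmetic difference is that the paper dispatches the ``if'' direction directly via Corollary~\ref{cor:sa.crit} and Remark~\ref{rem:sa.c0.bari} (self-adjointness), which you also note as an alternative.
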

\begin{proof}
\textbf{(i)} If $|a|=|d|=1$ (and $b=c=0$) then by Corollary~\ref{cor:sa.crit} the operator $L_U(0)$ with boundary conditions~\eqref{eq:quasi.per.bc} is self-adjoint. Remark~\ref{rem:sa.c0.bari} now finishes the proof.

\textbf{(ii)} Now assume that some normalized system of root vectors of the operator $L_U(0)$ is a Bari $c_0$-sequence in $\LLV{2}$. Since boundary conditions~\eqref{eq:quasi.per.bc} are strictly regular then by definition, eigenvalues of the operator $L_U(0)$ are asymptotically simple.
Hence by Lemma~\ref{lem:some.every} every normalized system of root vectors of the operator $L_U(0)$ is a Bari $c_0$-sequence in $\LLV{2}$.

According to the proof of Lemma~\ref{lem:ln0.exp.asymp} the eigenvalues of the operator $L_U(0)$ are simple and split into two separated arithmetic progressions $\L_0^1 = \{\l_{1,n}^{0}\}_{n \in \bZ}$ and $\L_0^2 = \{\l_{2,n}^{0}\}_{n \in \bZ}$ given by~\eqref{eq:l1n.l2n.bc=0}.
It is easy to verify that the vectors
\begin{equation}
 f_{1,n}^0(x) = \binom{e^{i b_1 \l_{1,n}^0 x}}{0}, \qquad
 g_{1,n}^0(x) = \binom{e^{i b_1 \ol{\l_{1,n}^0} x}}{0},
 \qquad n \in \bZ,
\end{equation}
are the eigenvectors of the operators $L_U(0)$ and $L_U^*(0)$ corresponding to the eigenvalues $\l_{1,n}^0$ and $\ol{\l_{1,n}^0}$, and the vectors
\begin{equation}
 f_{2,n}^0(x) = \binom{0}{e^{i b_2 \l_{2,n}^0 x}}, \qquad
 g_{2,n}^0(x) = \binom{0}{e^{i b_2 \ol{\l_{2,n}^0} x}}, \qquad n \in \bZ,
\end{equation}
are the eigenvectors of the operators $L_U(0)$ and $L_U^*(0)$  corresponding to the eigenvalues $\l_{2,n}^0$ and $\ol{\l_{2,n}^0}$ respectively. It is clear that
\begin{equation} \label{eq:fjn.gkm=delta}
 \(f_{j,n}^0, g_{k,m}^0\)_2 = \delta_{j,n}^{k,m}, \qquad j,k \in \{1,2\},
 \quad n,m \in \bZ.
\end{equation}
Thus the union system $\fF := \{f_{1,n}^0\}_{n \in \bZ} \cup \{f_{2,n}^0\}_{n
\in \bZ}$ is the system of root vectors of the operator $L_U(0)$ and $\fG :=
\{g_{1,n}^0\}_{n \in \bZ} \cup \{g_{2,n}^0\}_{n \in \bZ}$ is biorthogonal to it. Hence normalization of the system $\fF$ is a Bari $c_0$-sequence in $L^2([0,1]; \bC^2)$. According to Lemma~\ref{lem:crit.bari.not.norm} we have
\begin{equation} \label{eq:sumj.sumn.alp}
 \alp_{j,n} := \frac{\bigl\|f_{j,n}^0\bigr\|_2 \cdot
 \bigl\|g_{j,n}^0\bigr\|_2}{\abs{\bigl(f_{j,n}^0, g_{j,n}^0\bigr)_2}}
 \to 1 \quad\text{as}\quad n \to \infty, \qquad j \in \{1,2\}.
\end{equation}
Let $j=1$. Then taking into account Lemma~\ref{lem:ejx.ej.Ej} and formula~\eqref{eq:fjn.gkm=delta} we have
\begin{equation} \label{eq:alp1n>Im}
 \alp_{1,n}^2 = \bigl\|f_{1,n}^0\bigr\|_2^2 \cdot
 \bigl\|g_{1,n}^0\bigr\|_2^2 = E_1^+(\l_{1,n}^0) E_1^-(\l_{1,n}^0)
 \ge 1 + \frac13 \bigabs{b_2 \Im \l_{1,n}^0}^2, \qquad n \in \bZ.
\end{equation}
It follows from~\eqref{eq:l1n.l2n.bc=0} that $b_1 \Im \l_{1,n}^0 = \ln|a|$. Since $\alp_{1,n} \to 1$ as $n \to \infty$, formula~\eqref{eq:alp1n>Im} implies that $\ln|a|=0$, or $|a|=1$.

Similarly considering the case $j=2$ we conclude that $|d|=1$, which finishes the proof.
\end{proof}
In the following intermediate result we reduce condition~\eqref{eq:sum.fn.gn-1} of Bari $c_0$-property of the system of root vectors of the operator $L_U(0)$ to explicit condition in terms of eigenvalues $\{\l_n^0\}_{n \in \bZ}$.
Recall that $\beta = -b_2/b_1 > 0$.
\begin{proposition} \label{prop:crit.bari.b.ne.0}
Let boundary conditions~\eqref{eq:cond.canon} be strictly regular and let one of the parameters $b$ or $c$ in them be non-zero, $|b|+|c|>0$. Let $\{\l_n^0\}_{n \in \bZ}$ be the sequence of the eigenvalues of the operator $L_U(0)$ counting multiplicities. Then some normalized system of root vectors of the operator $L_U(0)$ is a Bari $c_0$-sequence in $\LLV{2}$ (see Definition~\ref{def:bari.c0}) if and only if the following conditions hold
\begin{equation} \label{eq:lim1.lim2}
 |c| = \beta |b|, \qquad \lim_{n \to \infty} \Im \l_n^0 = 0
 \quad\text{and}\quad \lim_{n \to \infty} z_n = |bc|,
\end{equation}
where
\begin{equation} \label{eq:zn.def}
 z_n := \(1 + d e^{- i b_2 \l_n^0}\)\ol{\(1 + a e^{i b_1 \l_n^0}\)}.
\end{equation}

\end{proposition}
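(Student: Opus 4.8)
The plan is to follow the strategy already used in Proposition~\ref{prop:crit.bari.period}: construct explicit (unnormalized) eigenvectors $f_n^0$ of $L_U(0)$ and $g_n^0$ of $L_U^*(0) = L_{U*}(0)$ associated with the eigenvalue $\l_n^0$, verify they are ``almost biorthogonal'' in the sense of Lemma~\ref{lem:crit.bari.not.norm}, and then translate the criterion $\|f_n^0\|_2 \|g_n^0\|_2 / |(f_n^0, g_n^0)_2| \to 1$ into the explicit conditions~\eqref{eq:lim1.lim2}. Concretely, since $\Delta_0(\l_n^0) = 0$, an eigenvector of $L_U(0)$ can be taken in the form $f_n^0(x) = \col\bigl(\beta_{1n} e^{i b_1 \l_n^0 x},\ \beta_{2n} e^{i b_2 \l_n^0 x}\bigr)$ with the coefficients $\beta_{1n}, \beta_{2n}$ determined (up to scalar) by the boundary conditions~\eqref{eq:cond.canon}; a convenient normalization is to express them through $1 + a e_{1n}$ and $1 + d e_{2n}$ (recall $e_{1n} = e^{i b_1 \l_n^0}$, $e_{2n} = e^{-i b_2 \l_n^0}$), which by Lemma~\ref{lem:ln0.exp.asymp}(ii) satisfy $|1+ae_{1n}|^2 + |1+de_{2n}|^2 \asymp 1$. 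Likewise, using the explicit adjoint boundary conditions~\eqref{eq:cond*} from Lemma~\ref{lem:adjoint}, one writes $g_n^0(x) = \col\bigl(\gamma_{1n} e^{i b_1 \ol{\l_n^0} x},\ \gamma_{2n} e^{i b_2 \ol{\l_n^0} x}\bigr)$ with $\gamma_{kn}$ built from the conjugated coefficients.

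Next I would compute the three quantities entering~\eqref{eq:sum.fn.gn-1}. By Lemma~\ref{lem:ejx.ej.Ej}, $\|f_n^0\|_2^2 = |\beta_{1n}|^2 E_1^+(\l_n^0) + |\beta_{2n}|^2 E_2^+(\l_n^0)$ and similarly for $\|g_n^0\|_2^2$ with $E_j^-$ (because the exponents in $g_n^0$ carry $\ol{\l_n^0}$), while the pairing $(f_n^0, g_n^0)_2$ reduces to $\beta_{1n}\ol{\gamma_{1n}}\int_0^1 |e^{2 i b_1 \l_n^0 x}| dx + \beta_{2n}\ol{\gamma_{2n}}\int_0^1 |e^{2 i b_2 \l_n^0 x}| dx$ — wait, more carefully the cross term vanishes by the off-diagonal structure, and each diagonal integral is again of the form $\int_0^1 e^{-2 b_j \Im \l_n^0 x}dx = E_j^{\pm}$ depending on signs. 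The point is that $\|f_n^0\|_2\|g_n^0\|_2 \ge |(f_n^0,g_n^0)_2|$ always (Cauchy--Schwarz applied componentwise together with $E_j^+ E_j^- \ge 1$), with equality in the limit forcing two things: first, each $\Im\l_n^0 \to 0$, so that $E_j^{\pm}(\l_n^0) \to 1$ (this uses the sharp lower bound $E_j^+E_j^- - 1 \ge (b_j\Im\l_n^0)^2/3$ of Lemma~\ref{lem:ejx.ej.Ej}, exactly as in~\eqref{eq:alp1n>Im}); and second, that the two ``direction vectors'' $(\beta_{1n},\beta_{2n})$ and $(\gamma_{1n},\gamma_{2n})$ become asymptotically parallel, which is the condition that unpacks into $|c| = \beta|b|$ and $\lim z_n = |bc|$.

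The main obstacle I anticipate is the bookkeeping in the last step: extracting from the asymptotic-parallelism requirement the precise algebraic conditions~\eqref{eq:lim1.lim2}. One has to solve the linear systems for $(\beta_{1n},\beta_{2n})$ from~\eqref{eq:cond.canon} and for $(\gamma_{1n},\gamma_{2n})$ from~\eqref{eq:cond*}, substitute $e_{2n} = -(1+ae_{1n})/(d + u e_{1n})$ from~\eqref{eq:e1.via.e2} to eliminate one exponential, and then compute $\|f_n^0\|_2^2\|g_n^0\|_2^2 - |(f_n^0,g_n^0)_2|^2$ up to $o(1)$ using $E_j^{\pm}(\l_n^0) = 1 + O(\Im\l_n^0)$ and the relations $|1+ae_{1n}| \asymp 1$, $|1+de_{2n}|\asymp 1$, $|d+ue_{1n}|\asymp 1$, $|a+ue_{2n}|\asymp 1$ from Lemma~\ref{lem:ln0.exp.asymp}(i) and~\eqref{eq:a+ue2.d+ue1}. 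After the dust settles, the nonnegative quantity $\alpha_n^2 - 1$ (where $\alpha_n$ is the ratio in~\eqref{eq:sum.fn.gn-1}) should be a sum of squares of: a multiple of $\Im\l_n^0$, a multiple of $|c| - \beta|b|$, and a multiple of $z_n - |bc|$ (plus $o(1)$), so that $\alpha_n \to 1$ is equivalent to all three going to zero. I would organize this as: (a) the ``easy'' direction, where~\eqref{eq:lim1.lim2} is assumed and one checks $\alpha_n \to 1$ by direct substitution; and (b) the converse, where $\alpha_n \to 1$ is assumed and, passing to subsequences and using boundedness of $\{e_{jn}\}$, one reads off $\Im\l_n^0 \to 0$ first, then the parallelism condition, then its algebraic reformulation. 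The special case $bc = 0$ is already handled by Proposition~\ref{prop:crit.bari.period} (note $|c| = \beta|b|$ with $|b|+|c|>0$ forces $bc \ne 0$, so in fact the present statement does not overlap it, but the case $bc=0$ may still arise transiently as $|c| \ne \beta|b|$, where one simply shows $\alpha_n \not\to 1$).
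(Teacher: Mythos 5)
Your route is the paper's route: take explicit eigenvectors $f_n^0$ of $L_U(0)$ and $g_n^0$ of $L_U^*(0)$ built from $b$, $1+a e_{1n}$, $1+d e_{2n}$ and the conjugated eigenvalue, invoke Lemma~\ref{lem:crit.bari.not.norm} together with Lemma~\ref{lem:some.every}, compute the norms through $E_j^{\pm}$, and run an equality analysis in a Cauchy--Schwarz-type inequality using $E_j^+E_j^--1\ge (b_j\Im\l)^2/3$ to force $\Im\l_n^0\to 0$ and then the algebraic condition $|c|=\beta|b|$, $z_n\to|bc|$. That is exactly how the paper argues, and your anticipated final shape of $\alp_n^2-1$ is essentially the paper's decomposition into $\tau_{1,n}+\tau_{2,n}+\tau_{3,n}$ with $\tau_{1,n}+\tau_{2,n}\asymp|\Im\l_n^0|^2$ and $\tau_{3,n}$ collapsing to $\abs{z_n-\beta|b|^2}^2$ once $E_{jn}^{\pm}\to 1$. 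However, two concrete points in your sketch would, as written, break the bookkeeping.

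First, your formula for the pairing is wrong even after your ``correction'': because $g_n^0$ carries $\ol{\l_n^0}$, the exponentials cancel \emph{identically} and $(f_n^0,g_n^0)_2$ is a constant containing no $E_j^{\pm}$ factors at all --- in the paper $(f_n^0,g_n^0)_2=b\bigl((1+de_{2n})+\beta(1+ae_{1n})\bigr)$, see~\eqref{eq:fn.gn}. This exact cancellation is the crux: the product $\|f_n^0\|_2^2\|g_n^0\|_2^2$ carries the factors $E_{jn}^+E_{jn}^-$ while $|(f_n^0,g_n^0)_2|^2$ carries none, and only then does the difference split into the three nonnegative terms~\eqref{eq:tau1}--\eqref{eq:tau3} whose separate vanishing yields~\eqref{eq:lim1.lim2}; with diagonal terms of the form $E_j^{\pm}$ in the pairing the sum-of-squares structure you expect does not come out. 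Relatedly, to pass between $\alp_n\to1$ and the vanishing of $\|f_n^0\|_2^2\|g_n^0\|_2^2-|(f_n^0,g_n^0)_2|^2$ one needs $|(f_n^0,g_n^0)_2|\asymp 1$; the paper gets this from strict regularity via $(f_n^0,g_n^0)_2\asymp\Delta_0'(\l_n^0)\asymp1$, and you never address it. Second, the degenerate subcase: the hypothesis is only $|b|+|c|>0$, so $bc=0$ is allowed and is \emph{not} covered by Proposition~\ref{prop:crit.bari.period}, which concerns $b=c=0$. Your plan to eliminate $e_{2n}$ via $e_{2n}=-(1+ae_{1n})/(d+ue_{1n})$ and to use $1+ae_{1n}\asymp1$, $1+de_{2n}\asymp1$, $d+ue_{1n}\asymp1$ rests on Lemma~\ref{lem:ln0.exp.asymp}(i) and~\eqref{eq:a+ue2.d+ue1}, which require $bc\ne0$; when $bc=0$ one of the factors $1+ae_{1n}$, $1+de_{2n}$ vanishes and these relations fail. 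The paper avoids both the substitution and any case split: it keeps both exponentials, uses only $|z_n|=|bc|\,|e_{1n}e_{2n}|$ from~\eqref{eq:Delta_0_in_roots} and the two-sided bound of Lemma~\ref{lem:ln0.exp.asymp}(ii), so that (with WLOG $b\ne0$) the argument runs uniformly and the case $c=0$ simply lands in ``$\tau_{4,n}\not\to0$'', i.e.\ the Bari $c_0$-property fails, as it must.
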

\begin{proof}
Without loss of generality we can assume that $b \ne 0$. By definition of strictly regular boundary conditions there exists $n_0 \in \bN$ such that eigenvalues $\l_n^0$ of $L_U(0)$ for $|n| > n_0$ are algebraically simple and separated from each other. According to the proof of Theorem~1.1 in~\cite{LunMal16JMAA} vector-functions $f_n^0(\cdot)$ and $g_n^0(\cdot)$, $|n| > n_0$, of the following form:
\begin{equation} \label{eq:fn0x.gn0x}
 f_n^0(x) := \binom{b e^{i b_1 \l_n^0 x}}{
 - (1 + a e^{i b_1 \l_n^0}) e^{i b_2 \l_n^0 x}}, \qquad
 g_n^0(x) := \ol{\binom{(1 + d e^{-i b_2 \l_n^0})
 e^{-i b_1 \l_n^0 x}}{ - \beta b e^{-i b_2 \l_n^0 x}}},
\end{equation}
are non-zero eigenvectors of the operators $L_U(0)$ and $L_U^*(0)$ corresponding to the eigenvalues $\l_n^0$ and $\ol{\l_n^0}$ for $|n| > n_0$, respectively. Let $f_n^0(\cdot)$ and $g_n^0(\cdot)$ be some root vectors of operators $L_U(0)$ and $L_U^*(0)$ corresponding to the eigenvalues $\l_n^0$ and $\ol{\l_n^0}$ for $|n| \le n_0$. Clearly $\fF := \{f_n^0\}_{n \in \bZ}$ is a system of root vectors of the operator $L_U(0)$ and $\fG := \{g_n^0\}_{n \in \bZ}$ is the corresponding system for the adjoint operator $L_U^*(0)$. Let us show that normalization of $\fF$ is a Bari $c_0$-sequence in $\LLV{2}$ if and only condition~\eqref{eq:lim1.lim2} holds. Since eigenvalues of the operator $L_U(0)$ are asymptotically simple Lemma~\ref{lem:some.every} will imply the statement of the proposition.
Clearly, $\fG$ is almost biorthogonal to $\fF$. Hence Lemma~\ref{lem:crit.bari.not.norm} implies that normalization of $\fF$ is a Bari $c_0$-sequence in $\LLV{2}$ if and only if condition~\eqref{eq:sum.fn.gn-1} holds.

Set for brevity $E_{jn}^{\pm} := E_j^{\pm}(\l_n^0)$, $j \in \{1,2\}$, $n \in \bZ$, where $E_j^{\pm}(\l)$ is defined in~\eqref{eq:Ejpm.def}. With account of this notation and notation~\eqref{eq:ekn.def} we get after performing straightforward calculations:
\begin{align}
\label{eq:|fn|2}
 \|f_n^0\|_2^2 &= |b|^2 E_{1n}^+ + |1 + a e_{1n}|^2 E_{2n}^+, \\
\label{eq:|gn|2}
 \|g_n^0\|_2^2 &= \abs{1 + d e_{2n}}^2 E_{1n}^- + \beta^2 |b|^2 E_{2n}^-, \\
\label{eq:fn.gn}
 (f_n^0, g_n^0)_2 &= b\((1 + d e_{2n}) + \beta (1 + a e_{1n})\).
\end{align}
Since boundary conditions~\eqref{eq:cond.canon} are strictly regular, it follows from the proof of Theorem~1.1 in~\cite{LunMal16JMAA} that the following estimate holds
\begin{equation}
 (f_n^0, g_n^0)_2 \asymp \Delta'(\l_n^0) \asymp 1, \quad |n| > n_0.
\end{equation}
Hence condition~\eqref{eq:sum.fn.gn-1} is equivalent to
\begin{equation} \label{eq:fn.gn-fngn.to0}
 \|f_n^0\|_2^2 \cdot \|g_n^0\|_2^2 - |(f_n^0, g_n^0)_2|^2
 \to 0 \quad\text{as}\quad n \to \infty.
\end{equation}
With account of~\eqref{eq:|fn|2}--\eqref{eq:fn.gn} we get
\begin{multline} \label{eq:fn2.gn2-fn.gn2=tau.sum}
 \|f_n^0\|_2^2 \cdot \|g_n^0\|_2^2 - |(f_n^0, g_n^0)_2|^2
 = \(|b|^2 \cdot E_{1n}^+ + |1 + a e_{1n}|^2 \cdot E_{2n}^+\) \cdot
 \(\abs{1 + d e_{2n}}^2 E_{1n}^- + \beta^2 |b|^2 E_{2n}^-\) \\
 - |b|^2 \abs{(1 + d e_{2n}) + \beta (1 + a e_{1n})}^2
 = \tau_{1,n} + \tau_{2,n} + \tau_{3,n}, \qquad |n| > n_0,
\end{multline}
where
\begin{align}
\label{eq:tau1}
 \tau_{1,n} &:= |b|^2 \cdot \abs{1 + d e_{2n}}^2 \cdot
 (E_{1n}^+ E_{1n}^- - 1), \\
\label{eq:tau2}
 \tau_{2,n} &:= \beta^2 |b|^2 \cdot |1 + a e_{1n}|^2 \cdot (E_{2n}^+ E_{2n}^- - 1), \\
\label{eq:tau3}
 \tau_{3,n} &:= \beta^2 |b|^4 E_{1n}^+ E_{2n}^- +
 |z_n|^2 \cdot E_{2n}^+ E_{1n}^- -
 2 \beta |b|^2 \cdot \Re z_n,
\end{align}
where $z_n = (1 + d e_{2n}) \ol{(1 + a e_{1n})}$ is defined in~\eqref{eq:zn.def}. According to Proposition~\ref{prop:sine.type}, $|\Im \l_n^0| \le h$, $n \in \bZ$, for some $h \ge 0$. Hence terms $|1 + d e_{2n}|$, $|1 + a e_{1n}|$, $|z_n|$, $E_{1n}^{\pm}$ and $E_{2n}^{\pm}$ are all bounded for $n \in \bZ$.

First assume that $\Im \l_n^0 \to 0$ as $n \to \infty$. Then it is clear from~\eqref{eq:Ej=f} that
\begin{equation} \label{eq:ejnto1}
 |e_{jn}| \to 1 \quad\text{and}\quad
 E_{jn}^{\pm} \to 1  \quad\text{as}\quad n \to \infty,
 \qquad j \in \{1,2\}.
\end{equation}
Hence $\tau_{1,n} + \tau_{2,n} \to 0$ as $n \to \infty$, while
$\tau_{3,n} \to 0$ as $n \to \infty$ if and only if
\begin{equation} \label{eq:tau4n.def}
 \tau_{4,n} :=
 \beta^2 |b|^4 + |z_n|^2
 - 2 \beta |b|^2 \cdot \Re z_n = \abs{z_n - \beta |b|^2}^2 \to 0 \quad\text{as}\quad n \to \infty.
\end{equation}
It follows from~\eqref{eq:Delta_0_in_roots} and~\eqref{eq:ejnto1} that $|z_n| = |bc|\cdot|e_{1n} e_{2n}| \to |bc|$ as $n \to \infty$. Hence, since $b \ne 0$, then
\begin{equation} \label{eq:tau4n.to0}
 \Bigl(\tau_{4,n} \to 0 \ \ \text{as}\ \ n \to \infty\Bigr)
 \ \ \Leftrightarrow \ \
 \Bigl(|c| = \beta |b| \ \ \text{and}\ \ z_n \to |bc|
 \ \ \text{as}\ \ n \to \infty\Bigr).
\end{equation}
Now if condition~\eqref{eq:lim1.lim2} holds then~\eqref{eq:tau4n.to0} and previous observations on $\tau_{1,n}$, $\tau_{2,n}$, $\tau_{3,n}$, $\tau_{4,n}$ imply the desired condition~\eqref{eq:fn.gn-fngn.to0}.

Now assume that condition~\eqref{eq:fn.gn-fngn.to0} holds.
It follows from~\eqref{eq:Ej+Ej->1} and~\eqref{eq:Ej=f} that
\begin{equation} \label{eq:Ej.Ej-1.asymp}
 0 \le E_{jn}^+ E_{jn}^- - 1 \asymp |\Im \l_n^0|^2, \quad n \in \bZ,
 \qquad j \in \{1,2\}.
\end{equation}
Since $b \ne 0$ and $\beta > 0$ relations~\eqref{eq:tau1}--\eqref{eq:tau2} and~\eqref{eq:Ej.Ej-1.asymp}
combined with Lemma~\ref{lem:ln0.exp.asymp} imply that
\begin{equation} \label{eq:tau1+tau2.asymp}
 \tau_{1,n} \ge 0, \quad \tau_{2,n} \ge 0, \quad \tau_{1,n} + \tau_{2,n} \asymp |\Im \l_n^0|^2, \quad |n| > n_0.
\end{equation}
With account of~\eqref{eq:Ej.Ej-1.asymp} we get for $n \in \bZ$:
\begin{equation} \label{eq:tau3.estim}
 \beta^2 |b|^4 E_{1n}^+ E_{2n}^- + |z_n|^2
 \cdot E_{2n}^+ E_{1n}^-
 \ge 2 \beta |b|^2 |z_n|
 \sqrt{E_{1n}^+ E_{1n}^- \cdot E_{2n}^- E_{2n}^+} \nonumber \\
 \ge 2 \beta |b|^2 \cdot \Re z_n.
\end{equation}
Hence $\tau_{3,n} \ge 0$, $n \in \bZ$. Since $\tau_{1,n} + \tau_{2,n} + \tau_{3,n} = \|f_n^0\|_2^2 \cdot \|g_n^0\|_2^2 - |(f_n^0, g_n^0)|_2^2 \to 0$ as $n \to \infty$, then $\tau_{j,n} \to 0$ as $n \to \infty$, $j \in \{1,2,3\}$. Then condition~\eqref{eq:tau1+tau2.asymp} implies that $\Im \l_n^0 \to 0$ as $n \to \infty$. Combining this with the fact that $\tau_{3,n} \to 0$ as $n \to \infty$, implies that $\tau_{4,n} \to 0$ as $n \to \infty$, where $\tau_{4,n}$ is defined in~\eqref{eq:tau4n.def}. Now, equivalence~\eqref{eq:tau4n.to0} finishes the proof.
\end{proof}
In the next result we reduce part of the condition~\eqref{eq:lim1.lim2} to an explicit condition on the coefficients $a, b, c, d$ in the boundary conditions~\eqref{eq:cond.canon} in the difficult case $b_2/b_1 \notin \bQ$.
\begin{lemma} \label{lem:lim.Imln}
Let boundary conditions~\eqref{eq:cond.canon} be regular, i.e. $u := ad-bc \ne 0 $. Let also $\beta = -b_2/b_1 \notin \bQ$. Let $\{\l_n^0\}_{n \in \bZ}$ be a sequence of zeros of the characteristic determinant $\Delta_0(\cdot)$ counting multiplicity. Let
\begin{equation} \label{eq:lim.Imln.zn}
 bc \ne 0, \qquad |a|+|d|>0, \qquad \Im \l_n^0 \to 0
 \quad\text{and}\quad
 z_n \to |bc|  \quad\text{as}\quad n \to \infty,
\end{equation}
where $z_n$ is defined in~\eqref{eq:zn.def}.
Then
\begin{equation} \label{eq.ad-bc=d/a}
 |a|=|d|>0, \qquad u = ad - bc = d/\ol{a}
 \qquad\text{and}\qquad ad\ol{bc} < 0.
\end{equation}
\end{lemma}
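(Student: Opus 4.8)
The plan is to extract concrete consequences from the four hypotheses in~\eqref{eq:lim.Imln.zn} by exploiting the algebraic identity~\eqref{eq:Delta_0_in_roots} that the exponentials $e_{1n} := e^{i b_1 \l_n^0}$ and $e_{2n} := e^{-i b_2 \l_n^0}$ must satisfy, together with the infinitude of limit points of these exponential sequences guaranteed by Proposition~\ref{prop:nlim.inf} (which applies precisely because $\beta = -b_2/b_1 \notin \bQ$ and the boundary conditions are regular). First I would record that, since $\Im \l_n^0 \to 0$, we have $|e_{1n}| = e^{-b_1 \Im \l_n^0} \to 1$ and $|e_{2n}| = e^{b_2 \Im \l_n^0} \to 1$; combined with~\eqref{eq:Delta_0_in_roots} this yields $|z_n| = |(1+a e_{1n})(1+d e_{2n})| = |bc|\,|e_{1n} e_{2n}| \to |bc|$, so the hypothesis $z_n \to |bc|$ is the statement that the \emph{argument} of $z_n$ tends to $0$, i.e. $z_n \to |bc| > 0$ through positive reals asymptotically.

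Next I would pass to a convergent subsequence. Since $bc \ne 0$, Lemma~\ref{lem:ln0.exp.asymp}(i) gives $1 + a e_{1n} \asymp 1$ and $1 + d e_{2n} \asymp 1$, and by Proposition~\ref{prop:nlim.inf} the sequence $\{e_{1n}\}$ has infinitely many limit points on the circle $|z| = 1$ (its modulus tends to $1$). Pick any limit point $\zeta$ of $\{e_{1n}\}$ with $|\zeta| = 1$; along the corresponding subsequence, $e_{1n} \to \zeta$, and then from the relation $e_{2n} = -\,(1 + a e_{1n})/(d + u e_{1n})$ in~\eqref{eq:e1.via.e2} — whose denominator is $\asymp 1$ by~\eqref{eq:a+ue2.d+ue1} — the sequence $e_{2n}$ converges along that subsequence to some $\eta$ with $|\eta| = 1$, where $\eta = -(1 + a\zeta)/(d + u\zeta)$ and $(1+a\zeta)(1+d\eta) = bc\,\zeta\eta$. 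Passing to the limit in $z_n = (1 + d e_{2n})\overline{(1 + a e_{1n})} \to |bc|$ gives the key scalar identity
\begin{equation} \label{eq:keyscalar}
 (1 + d \eta)\,\overline{(1 + a \zeta)} = |bc|, \qquad |\zeta| = |\eta| = 1.
\end{equation}
Because $\{e_{1n}\}$ has \emph{infinitely many} limit points, I get~\eqref{eq:keyscalar} for infinitely many distinct pairs $(\zeta,\eta)$ on the torus; this over-determination is what will force the rigid conclusion~\eqref{eq.ad-bc=d/a}.

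The core of the argument is then purely algebraic: from~\eqref{eq:keyscalar} and $(1+a\zeta)(1+d\eta) = bc\,\zeta\eta$ I would eliminate to get relations that must hold for infinitely many $\zeta$ on the unit circle. Multiplying the two displayed identities appropriately yields $|1 + a\zeta|^2 = |bc| \cdot |1+a\zeta|^2 / |1 + d\eta|^2 \cdot (\ldots)$-type expressions; more cleanly, from~\eqref{eq:keyscalar} one has $1 + d\eta = |bc| / \overline{(1+a\zeta)}$, so $|1 + d\eta|^2 (1+a\zeta) = |bc| (1 + d\eta) \overline{(1+d\eta)} / \overline{(1+a\zeta)} \cdot \ldots$; I would instead substitute $1 + d\eta = |bc|/\overline{1+a\zeta}$ into $(1+a\zeta)(1+d\eta) = bc\,\zeta\eta$ to obtain $|1+a\zeta|^2 \cdot \zeta\eta \cdot \overline{(1+a\zeta)}^{-1}\cdot(1+a\zeta) = $ wait — more directly, $(1+a\zeta)\cdot |bc|/\overline{(1+a\zeta)} = bc\,\zeta\eta$, i.e. $|bc|\,(1+a\zeta)/\overline{(1+a\zeta)} = bc\,\zeta\eta$, which determines $\eta$ as an explicit Möbius-type function of $\zeta$ and forces $\eta = \operatorname{sign}(\overline{bc})\,|bc|^{-1}\cdot\ldots$; combined with $\eta = -(1+a\zeta)/(d+u\zeta)$ this gives a rational identity in $\zeta$ valid for infinitely many $\zeta \in \bT$, hence identically in $\zeta$. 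Comparing coefficients of the resulting polynomial identity (degrees in $\zeta$ and $\bar\zeta = 1/\zeta$) produces finitely many algebraic equations on $a,b,c,d$; I expect these to collapse exactly to $|a| = |d|$, $ad\overline{bc} < 0$, and $u = ad - bc = d/\bar a$. I would double-check the sign $ad\overline{bc}<0$ by evaluating the identity at a convenient $\zeta$ (e.g. where $1 + a\zeta$ is real and positive, which exists since $|\zeta|$ ranges over the circle and $a \ne 0$ as $|a|+|d|>0$ will be shown to force $a \ne 0$).

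The main obstacle I anticipate is the bookkeeping in this elimination: turning~\eqref{eq:keyscalar} plus the root identity into a genuine polynomial identity in $\zeta$ and extracting the three conclusions without case splits. A subtle point requiring care is the possibility $a = 0$ or $d = 0$: the hypothesis only gives $|a| + |d| > 0$, so I must first rule out (say) $a = 0, d \ne 0$ by showing it is incompatible with~\eqref{eq:keyscalar} holding for infinitely many $\zeta$ — if $a = 0$ then $\overline{(1+a\zeta)} = 1$ and~\eqref{eq:keyscalar} reads $1 + d\eta = |bc|$, a single value, forcing $\eta$ constant, contradicting that $\eta$ also ranges infinitely (being a nonconstant Möbius image of $\zeta$ since $u \ne 0$); hence $a \ne 0$, and symmetrically $d \ne 0$, after which the generic elimination above applies and delivers~\eqref{eq.ad-bc=d/a}.
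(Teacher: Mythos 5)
Your plan follows essentially the same route as the paper's proof: use $e_{2n} = -(1+a e_{1n})/(d+u e_{1n})$ from~\eqref{eq:e1.via.e2}, the lower bound~\eqref{eq:a+ue2.d+ue1}, Lemma~\ref{lem:ln0.exp.asymp}(i), and the infinitude of limit points of $\{e_{1n}\}_{n\in\bZ}$ (Proposition~\ref{prop:nlim.inf}) to over-determine the limit $z_n\to|bc|$; the paper simply performs the elimination directly on the sequence instead of passing to subsequential limits $(\zeta,\eta)$. The one place where your text is not yet a proof is precisely the decisive step: you only state that the resulting rational identity in $\zeta$ should ``collapse'' to~\eqref{eq.ad-bc=d/a} after comparing coefficients (``I expect these to collapse\dots''), and the computation leading there contains visible false starts. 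In fact no polynomial bookkeeping is needed: substituting $1+d\eta=|bc|/\ol{(1+a\zeta)}$ and $\eta=-(1+a\zeta)/(d+u\zeta)$ into $(1+a\zeta)(1+d\eta)=bc\,\zeta\eta$, cancelling the nonvanishing factor $1+a\zeta$ and using $\ol{\zeta}=1/\zeta$ (since $|\zeta|=1$) reduces everything to the single linear relation
\begin{equation*}
 \(|bc|\,u+bc\)\zeta \;+\; |bc|\,d + bc\,\ol{a} \;=\; 0,
\end{equation*}
valid at every limit point $\zeta$ of $\{e_{1n}\}_{n\in\bZ}$. Two distinct limit points already force $|bc|\,u=-bc$ and $|bc|\,d=-bc\,\ol{a}$ (this is exactly the paper's identity~\eqref{eq:ubc.ua=d}), and then $|a|=|d|>0$, $ad\,\ol{bc}=-|bc|\,|d|^2<0$ and $u=-bc/|bc|=d/\ol{a}$ follow in one line. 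Note also that your preliminary case split ruling out $a=0$ or $d=0$ is unnecessary: the relation $|bc|\,d=-bc\,\ol{a}$ gives $|a|=|d|$, so the hypothesis $|a|+|d|>0$ excludes both degeneracies at once. So the idea is right and matches the paper's; what is missing is actually carrying out this short computation and deriving the three conclusions, rather than asserting the expected collapse.
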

\begin{proof}
Since $\Im \l_n^0 \to 0$ as $n \to \infty$ then $|e_{1n}| \to 1$ as $n \to \infty$.
Further, since boundary conditions are regular, $b_2/b_1 \notin \bQ$ and $bc \ne 0$ then all considerations in the proof of Proposition~\ref{prop:nlim.inf} are valid. Since $u-ad=-bc$, then the second relation in~\eqref{eq:e1.via.e2} implies:
\begin{multline} \label{eq:zn=frac.e1}
 z_n = (1 + d e_{2n}) \cdot \ol{(1+a e_{1n})}
 = \(1 - d \frac{1 + a e_{1n}}{d + u e_{1n}}\) \cdot
 (1 + \ol{a} \ol{e_{1n}}) \\
 = \frac{-bc e_{1n} (1 + \ol{a} \ol{e_{1n}})}{d + u e_{1n}}
 = \frac{-bc (e_{1n} + \ol{a} |e_{1n}|^2)}{d + u e_{1n}},
 \qquad n \in \bZ.
\end{multline}
Recall that $d + u e_{1n} \asymp 1$, $n \in \bZ$, as established in~\eqref{eq:a+ue2.d+ue1}. Since $z_n \to |bc|$ and $|e_{1n}| \to 1$ as $n \to \infty$, then~\eqref{eq:zn=frac.e1} implies that
\begin{equation}
 |bc| (d + u e_{1n}) + bc (e_{1n} + \ol{a}) \to 0
 \quad\text{as}\quad n \to \infty,
\end{equation}
or
\begin{equation} \label{eq:f.e1.lim}
 (|bc| u + bc) e_{1n} + |bc| d + bc \ol{a} \to 0
 \quad\text{as}\quad n \to \infty,
\end{equation}
But by Proposition~\ref{prop:nlim.inf} the sequence $\{e_{1n}\}_{n \in \bZ}$ has infinite set of limit points. Hence relation~\eqref{eq:f.e1.lim} is possible only if
\begin{equation} \label{eq:ubc.ua=d}
 |bc| u = -bc \quad\text{and}\quad |bc| d = -bc \ol{a}.
\end{equation}
Since $|a|+|d|>0$ and $bc\ne 0$, then the second relation in~\eqref{eq:ubc.ua=d} implies that $|a| = |d| > 0$ and that $bc \ol{ad} = - |bc| |d|^2 < 0$. This implies the first and the third relations in~\eqref{eq.ad-bc=d/a}. Further, combining both relations in~\eqref{eq:ubc.ua=d} implies the second relation in~\eqref{eq.ad-bc=d/a}: $u = d/\ol{a} = -bc/|bc|$, which finishes the proof.
\end{proof}
Now we are ready to state the main result of this section.
\begin{theorem} \label{th:crit.c0.bari}
Let boundary conditions~\eqref{eq:cond.canon} be strictly regular. Then some normalized system of root vectors of the operator $L_U(0)$ is a Bari $c_0$-sequence in $\LLV{2}$ (see Definition~\ref{def:bari.c0}) if and only the operator $L_U(0)$ is self-adjoint. The latter holds if and only if coefficients $a,b,c,d$ from boundary conditions~\eqref{eq:cond.canon} satisfy the following relations:
\begin{equation} \label{eq:abcd.sa}
 |a|^2 + \beta |b|^2 = 1, \qquad
 |c|^2 + \beta |d|^2 = \beta, \qquad
 a \ol{c} + \beta b \ol{d} = 0, \qquad \beta := -b_2/b_1 > 0.
\end{equation}
In this case every normalized system of root vectors of the operator $L_U(0)$ is a Bari $c_0$-sequence in $\LLV{2}$.
\end{theorem}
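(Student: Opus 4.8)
The plan is to combine the structural reductions already assembled in this section. The equivalence of self-adjointness of $L_U(0)$ with the explicit relations~\eqref{eq:abcd.sa} is exactly Corollary~\ref{cor:sa.crit}, so that part requires nothing new. If $L_U(0)$ is self-adjoint, then by Remark~\ref{rem:sa.c0.bari} every normalized system of root vectors is already an orthonormal basis, hence trivially a Bari $c_0$-sequence; this disposes of one direction and of the final sentence simultaneously (using also that self-adjointness forces asymptotically simple eigenvalues, so Lemma~\ref{lem:some.every} applies). So the real content is the forward direction: assuming some normalized system of root vectors is a Bari $c_0$-sequence, deduce~\eqref{eq:abcd.sa}.

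For the forward direction I would split into cases according to the structure of the boundary conditions. First, if $b=c=0$, Proposition~\ref{prop:crit.bari.period} gives $|a|=|d|=1$, and with $b=c=0$ the relations~\eqref{eq:abcd.sa} reduce precisely to $|a|=|d|=1$, so we are done. Otherwise $|b|+|c|>0$, and Proposition~\ref{prop:crit.bari.b.ne.0} tells us the Bari $c_0$-property is equivalent to the three conditions in~\eqref{eq:lim1.lim2}: $|c|=\beta|b|$, $\Im\l_n^0\to 0$, and $z_n\to|bc|$ with $z_n$ as in~\eqref{eq:zn.def}. It remains to show these three limit conditions force~\eqref{eq:abcd.sa}. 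Here I would further split on whether $bc=0$ or $bc\ne 0$, and (in the harder sub-case) on whether $\beta=-b_2/b_1$ is rational or not.

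If $bc\ne 0$: when $\beta\notin\bQ$, Lemma~\ref{lem:lim.Imln} applies (its hypotheses~\eqref{eq:lim.Imln.zn} are met since $|a|+|d|>0$ would need checking — if $a=d=0$ then $z_n\equiv 1$, contradicting $z_n\to|bc|\ne 1$ unless $|bc|=1$, a boundary sub-case to handle separately by noting $\Im\l_n^0\to 0$ together with $|z_n|\to|bc|$ pins things down), giving $|a|=|d|>0$, $u=d/\ol a$, and $ad\ol{bc}<0$. The case $\beta\in\bQ$ is easier because then $\{\l_n^0\}$ is a union of arithmetic progressions on horizontal lines and $\Im\l_n^0\to 0$ forces those lines to be the real axis, after which $z_n\to|bc|$ can be analyzed by direct computation on each progression; one expects the analogous conclusions $|a|=|d|$ etc. Finally, from $|c|=\beta|b|$, $|a|=|d|$, and the phase relation $u=ad-bc=d/\ol a$ (equivalently $ad\ol{bc}<0$ with the modulus constraints), together with the normalization $J_{14}=1$ built into~\eqref{eq:cond.canon}, one computes that the three equations in~\eqref{eq:abcd.sa} hold — this is the same algebra that Corollary~\ref{cor:sa.crit} identifies with self-adjointness via $CBC^*=DBD^*$; in fact the cleanest route is to verify that $|a|=|d|$, $|c|=\beta|b|$ and the phase condition are exactly equivalent to~\eqref{eq:abcd.sa2}, which Corollary~\ref{cor:sa.crit} already equates with~\eqref{eq:abcd.sa}. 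If $bc=0$ but $|b|+|c|>0$, say $c\ne 0$, $b=0$: then $|c|=\beta|b|=0$ contradicts $c\ne0$, so this sub-case is vacuous; symmetrically for $b\ne 0$, $c=0$.

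The main obstacle I expect is the sub-case $bc\ne 0$, $\beta\notin\bQ$: one must genuinely use that $\{e_{1n}\}_{n\in\bZ}$ has infinitely many limit points (Proposition~\ref{prop:nlim.inf}) to conclude that a relation of the form $(\,|bc|u+bc\,)e_{1n}+|bc|d+bc\ol a\to 0$ can hold only if both coefficients vanish — this is precisely what Lemma~\ref{lem:lim.Imln} packages, so if that lemma is invoked the obstacle is already cleared and what remains is the bookkeeping of translating $|a|=|d|$, $|c|=\beta|b|$, $u=d/\ol a$ into the three scalar identities~\eqref{eq:abcd.sa}, plus the careful treatment of the degenerate sub-cases ($a=d=0$; $\beta$ rational) noted above.
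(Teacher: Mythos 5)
Your plan reproduces the paper's own structure almost verbatim in all but one place: the backward direction via Corollary~\ref{cor:sa.crit} and Remark~\ref{rem:sa.c0.bari}; the case $b=c=0$ via Proposition~\ref{prop:crit.bari.period}; the case $|b|+|c|>0$ via Proposition~\ref{prop:crit.bari.b.ne.0}; the observation that $bc=0$, $|b|+|c|>0$ is killed by $|c|=\beta|b|$; the sub-case $a=d=0$ (where $z_n\equiv1$ forces $|bc|=1$, hence~\eqref{eq:abcd.sa}); and, for $b_2/b_1\notin\bQ$, $bc\ne0$, $|a|+|d|>0$, Lemma~\ref{lem:lim.Imln} followed by algebra turning $|a|=|d|$, $u=d/\ol{a}$, $ad\,\ol{bc}<0$, $|c|=\beta|b|$ into~\eqref{eq:abcd.sa}; your suggested shortcut through~\eqref{eq:abcd.sa2} does work and is equivalent to the paper's direct computation. (The parenthetical appeal to Lemma~\ref{lem:some.every} in the easy direction is unnecessary, and self-adjointness alone does not give simple eigenvalues; Remark~\ref{rem:sa.c0.bari} already covers every normalized system.)

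The genuine gap is the rational sub-case $b_2/b_1\in\bQ$ with $bc\ne0$, which you declare ``easier'' and settle by asserting that ``direct computation on each progression'' will give ``the analogous conclusions $|a|=|d|$ etc.'' The mechanism that produced those conclusions in the irrational case is Proposition~\ref{prop:nlim.inf}: the infinitude of limit points of $\{e^{ib_1\l_n^0}\}$ forces both coefficients in the limit relation~\eqref{eq:f.e1.lim} to vanish. That mechanism is exactly what is unavailable here: by periodicity $\{e^{ib_1\l_n^0}\}$ and $\{e^{-ib_2\l_n^0}\}$ take only finitely many values, namely powers of the roots $e^{i\mu_k}$ of a degree-$(n_1+n_2)$ polynomial which are not explicit in $a,b,c,d$, so the limit conditions degenerate into finitely many identities $z_n=|bc|$ from which it is not at all clear how to extract $|a|=|d|$, $u=d/\ol{a}$, $ad\,\ol{bc}<0$. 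The paper avoids any such computation: periodicity upgrades the limits to exact identities $\Im\l_n^0=0$, $E_{jn}^{\pm}=1$ as in~\eqref{eq:Ejn=1}, and $z_n=|bc|=\beta|b|^2$ for all $n$, so that by~\eqref{eq:fn2.gn2-fn.gn2=tau.sum}--\eqref{eq:tau4n.def} one gets $\|f_n^0\|_2^2\,\|g_n^0\|_2^2=|(f_n^0,g_n^0)_2|^2$ identically; by~\eqref{eq:|fn'-gn'|} this is equality in the Cauchy--Schwarz-type bound, the normalized eigenvectors of $L_U(0)$ and $L_U^*(0)$ coincide, and $L_U(0)=L_U^*(0)$ follows directly, with~\eqref{eq:abcd.sa} then supplied by Corollary~\ref{cor:sa.crit}. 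Without this idea (or some substitute argument) your rational sub-case remains unproved.
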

\begin{proof}

\textbf{(i)} If conditions~\eqref{eq:abcd.sa} hold then by Corollary~\ref{cor:sa.crit} the operator $L_U(0)$ with boundary conditions~\eqref{eq:quasi.per.bc} is self-adjoint. Remark~\ref{rem:sa.c0.bari} now finishes the proof.

\textbf{(ii)} Now assume that some normalized system of root vectors of the operator $L_U(0)$ forms a Bari basis in $\LLV{2}$.

If $b=c=0$ then Proposition~\ref{prop:crit.bari.period} yields that $|a|=|d|=1$, in which case operator $L_U(0)$ is self-adjoint. This finishes the proof in this case.

Now let $|b|+|c| \ne 0$. Proposition~\ref{prop:crit.bari.b.ne.0} implies that relations~\eqref{eq:lim1.lim2} take place. In particular, $|c| = \beta |b|$. Consider three cases.

\textbf{Case A.} Let $b_1 / b_2 \in \bQ$. In this case $b_1 = -m_1 b_0$, $b_2 = m_2 b_0$, where $b_0 > 0$, $m_1, m_2 \in \bN$. Set $m = m_1 + m_2$. Since $ad \ne bc$, then $\Delta_0(\l) e^{-i b_1 \l}$ is a polynomial in $e^{i b_0 \l}$ of degree $m$ with non-zero roots $e^{i\mu_k}$, $\mu_k \in \bC$, $k \in \{1, \ldots, m\}$, counting multiplicities. Hence, the sequence of zeros $\{\l_n^0\}_{n \in \bZ}$ of $\Delta_0(\cdot)$ is a union of arithmetic progressions $\left\{\frac{\mu_k + 2 \pi n}{b_0}\right\}_{n \in \bZ}$, $k \in \{1, \ldots, m\}$. Clearly $\Im \l_n^0 = \Im \mu_{k_n} / b_0$ for some $k_n \in \{1, \ldots, m\}$. It is clear, that if $\Im \mu_k \ne 0$, for some $k \in \{1, \ldots, m\}$, then $\Im \l_n^0$ does not tend to $0$ as $n \to \infty$.
Hence $\Im \l_n^0 = 0$, $n \in \bZ$. This implies that
\begin{equation} \label{eq:Ejn=1}
 E_{jn}^{\pm} = \int_0^1 \abs{e^{\pm 2 i b_j \l_n^0 x}} dx = 1,
 \qquad n \in \bZ, \qquad j \in \{1,2\}.
\end{equation}
It is clear that $e^{-i b_2 \l_n^0} = (e^{-i \mu_{k_n}})^{m_2}$ for some $k_n \in \{1, \ldots, m\}$, $n \in \bZ$, and $\{k_n\}_{n \in \bZ}$ is a periodic sequence. Hence the sequence $\{e^{-i b_2 \l_n^0}\}_{n \in \bZ}$ is periodic. Similarly the sequence $\{e^{i b_1 \l_n^0}\}_{n \in \bZ}$ is periodic. Hence, the sequence
\begin{equation}
\{z_n\}_{n \in \bZ}, \qquad z_n = \(1 + d e^{- i b_2 \l_n^0}\)\ol{\(1 + a e^{i b_1 \l_n^0}\)} = \(1 + d e_{2n}\)\ol{\(1 + a e_{1n}\)},
\end{equation}
is periodic. Since $z_n \to |bc|$ as $n \to \infty$ and $|c| = \beta|b|$, it implies that $z_n = |bc| = \beta|b|^2$, $n \in \bZ$. It now follows from~\eqref{eq:fn2.gn2-fn.gn2=tau.sum}--\eqref{eq:tau4n.def} and~\eqref{eq:Ejn=1} that
\begin{equation}
\|f_n^0\|_2^2 \cdot \|g_n^0\|_2^2 - |(f_n^0, g_n^0)|_2^2 = \tau_{4,n} =
\abs{z_n - \beta |b|^2}^2 = 0, \qquad n \in \bZ.
\end{equation}
Taking into account formula~\eqref{eq:|fn'-gn'|} we see that the normalized eigenvectors $f_n'$ and $g_n'$ of the operators $L_U(0)$ and $L_U^*(0)$ corresponding to the common eigenvalue $\l_n^0 = \ol{\l_n^0}$ are equal for all $n \in \bZ$, which implies that $L_U(0) = L_U^*(0)$.

\textbf{Case B.} Let $a=d=0$.
Then $z_n = 1$, $n \in \bZ$. Since $z_n \to |bc|$ as $n \to \infty$, then $|bc| = 1$. Combined with $|c| = \beta|b|$, this implies the desired condition~\eqref{eq:abcd.sa}, and finishes the proof in this case.

\textbf{Case C.} Finally, let $b_1 / b_2 \notin \bQ$, $|a|+|d|>0$ and $bc \ne 0$. Since $\Im \l_n^0 \to 0$ and $z_n \to |bc|$ as $n \to \infty$, then Lemma~\ref{lem:lim.Imln} implies condition~\eqref{eq.ad-bc=d/a}. In particular, $|a|=|d|>0$ and $ad\ol{bc} < 0$. Since, in addition, $|c| = \beta |b| > 0$, then
\begin{equation} \label{eq:adbc=b2d2}
 -ad\ol{bc} = |ad \cdot bc| = |d|^2 \cdot \beta |b|^2 = \beta b \ol{d} \cdot d \ol{b}.
\end{equation}
Since $d \ol{b} \ne 0$, this implies that $a \ol{c} + \beta b \ol{d} = 0$ and coincides with the third condition in~\eqref{eq:abcd.sa}. Further, the second relation in~\eqref{eq.ad-bc=d/a}, combined with relations~\eqref{eq:adbc=b2d2}, $|a|=|d|$ and $|c| = \beta |b|$, implies that
\begin{multline} \label{eq:0=a+b=c+d}
 0 = \(-ad + bc + d / \ol{a}\)\ol{bc} =
 -ad\ol{bc} + |bc|^2 + \frac{ad\ol{bc}}{|a|^2}
 = |ad|\cdot|bc| + |bc|^2 - \frac{|ad|\cdot|bc|}{|a|^2} \\
 = |bc|(|ad| + |bc|-1)
 = |bc|(|a|^2 + \beta|b|^2-1) = |bc|(|d|^2 + \beta^{-1}|c|^2-1).
\end{multline}
Since $bc \ne 0$, relation~\eqref{eq:0=a+b=c+d} implies the first and second relations in~\eqref{eq:abcd.sa}, which finishes the proof.
\end{proof}
\section{The proof of the main result}
This section is devoted to the proof of the main result of the paper, Theorem~\ref{th:crit.lp.bari}. Throughout the section we use the following notations:
\begin{equation}
\fH := \LLV{2}, \qquad
\|\cdot\| := \|\cdot\|_2 = \|\cdot\|_{\fH} \quad\text{and}\quad
(\cdot,\cdot) := (\cdot,\cdot)_{2} = (\cdot,\cdot)_{\fH}.
\end{equation}

First we need the following trivial corollary from Theorem~\ref{th:ellp-close}.
\begin{corollary} \label{cor:every.SRV}
Let $Q \in \LL{p}$ for some $p \in [1,2]$ and boundary conditions~\eqref{eq:cond} be strictly regular. Let $\fF := \{f_n\}_{n \in \bZ}$ be a system of root vectors of the operator $L_U(Q)$ such that $\|f_n\| \asymp 1$, $n \in \bZ$. Then there exists a system of root vectors $\fF_0 := \{f_n^0\}_{n \in \bZ}$ of the operator $L_U(0)$ such that $\curl{\|f_n - f_n^0\|}_{n \in \bZ} \in (\ell^{p})^*$ and $\|f_n\| = \|f_n^0\|$, $n \in \bZ$.
\end{corollary}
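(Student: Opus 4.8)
The plan is to deduce this directly from Theorem~\ref{th:ellp-close} applied to the two-point set $\cK := \{0, Q\}$, which is finite and hence compact in $\LL{p}$. Taking $\wt Q = 0$ there, I obtain normalized systems of root vectors $\{\wh f_n\}_{n \in \bZ}$ of $L_U(Q)$ and $\{\wh f_n^0\}_{n \in \bZ}$ of $L_U(0)$ for which $\curl{\|\wh f_n - \wh f_n^0\|_\infty}_{n \in \bZ} \in (\ell^p)^*$: for $p \in (1,2]$ this is~\eqref{eq:sum.fn-fn0} after discarding the finitely many terms with $|n| \le N$, and for $p = 1$ it is~\eqref{eq:lim.fn-fn0.c0}. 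Since $\|g\|_2 \le \|g\|_\infty$ for every $g \in L^\infty([0,1]; \bC^2)$, the same membership holds with the $L^2$-norm, so $\curl{\|\wh f_n - \wh f_n^0\|_2}_{n \in \bZ} \in (\ell^p)^*$.

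Next I would reconcile the given system $\fF = \{f_n\}_{n \in \bZ}$ with the normalized one $\{\wh f_n\}_{n \in \bZ}$. After applying a common permutation to the indexings of $\{\wh f_n\}$ and $\{\wh f_n^0\}$ — necessarily a finite permutation, since for $|n|$ large both enumerations obey $\l_n = \l_n^0 + o(1)$ by Proposition~\ref{prop:Delta.regular.basic} — I may assume $f_n$ and $\wh f_n$ are root vectors of $L_U(Q)$ associated with the same eigenvalue $\l_n$. Strict regularity gives an $n_0 \in \bN$ such that $\l_n$ is algebraically simple for $|n| > n_0$ (Proposition~\ref{prop:Delta.regular.basic} together with the remark following Definition~\ref{def:strictly.regular}), so $\dim \cR_{\l_n}(L_U(Q)) = 1$ and $f_n = c_n \wh f_n$ for some $c_n \in \bC \setminus \{0\}$. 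Then $\|f_n\| = |c_n| \cdot \|\wh f_n\| = |c_n|$, and the hypothesis $\|f_n\| \asymp 1$ forces $|c_n| \le C$ for $|n| > n_0$.

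It remains to set $f_n^0 := c_n \wh f_n^0$ for $|n| > n_0$ and $f_n^0 := \|f_n\| \cdot \wh f_n^0$ for $|n| \le n_0$. Each $f_n^0$ is a nonzero scalar multiple of $\wh f_n^0$, so $\fF_0 := \{f_n^0\}_{n \in \bZ}$ is again a system of root vectors of $L_U(0)$, and $\|f_n^0\| = \|f_n\|$ for all $n \in \bZ$ by construction (using $\|\wh f_n^0\| = 1$ and $\|f_n\| = |c_n|$ for $|n| > n_0$). For $|n| > n_0$ one has $\|f_n - f_n^0\| = |c_n| \cdot \|\wh f_n - \wh f_n^0\|_2 \le C \|\wh f_n - \wh f_n^0\|_2$; since $(\ell^p)^*$ — namely $\ell^{p'}(\bZ)$ for $p \in (1,2]$ and $c_0(\bZ)$ for $p = 1$ — is invariant under multiplication by bounded sequences, $\curl{\|f_n - f_n^0\|}_{|n| > n_0} \in (\ell^p)^*$, and adjoining the finitely many terms with $|n| \le n_0$ does not change this. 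Hence $\curl{\|f_n - f_n^0\|}_{n \in \bZ} \in (\ell^p)^*$, as claimed.

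The argument is routine; the only step demanding a little care is the middle one — identifying an arbitrary system of root vectors $\fF$ with unit-comparable norms against the particular normalized system delivered by Theorem~\ref{th:ellp-close} — and this is settled by algebraic simplicity of the eigenvalues of large index, which is precisely what strict regularity provides.
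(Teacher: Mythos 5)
Your proposal is correct and follows essentially the same route as the paper's own proof: apply Theorem~\ref{th:ellp-close} with $\wt{Q}=0$ to get normalized systems with $(\ell^p)^*$-close differences, use asymptotic algebraic simplicity of the eigenvalues to write $f_n$ as a scalar multiple of the normalized root vector for large $|n|$, and rescale $\wh{f}_n^0$ by the same (bounded) scalars, handling the finitely many remaining indices by hand. The only cosmetic difference is that you make explicit the finite re-indexing step, which the paper leaves implicit.
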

\begin{proof}
Combining relations~\eqref{eq:sum.fn-fn0} and~\eqref{eq:lim.fn-fn0.c0} from Theorem~\ref{th:ellp-close} applied with $\wt{Q}=0$, implies existence of normalized systems of root vectors $\wt{\fF} := \{\wt{f}_n\}_{n \in \bZ}$ and $\wt{\fF}_0 := \{\wt{f}_n^0\}_{n \in \bZ}$ of the operators $L_U(Q)$ and $L_U(0)$, respectively, such that $\curl{\|\wt{f}_n - \wt{f}_n^0\|_{\infty}}_{n \in \bZ} \in (\ell^{p})^*$. Hence
$\curl{\|\wt{f}_n - \wt{f}_n^0\|}_{n \in \bZ} \in (\ell^{p})^*$. By Proposition~\ref{prop:Delta.regular.basic} eigenvalues of $L_U(Q)$ are asymptotically simple. Hence vectors $f_n$ and $\wt{f}_n$, $|n| \ge N$, are proportional for some $N \in \bN$, i.e. $f_n = \alp_n \wt{f}_n$, $|n| \ge N$, for some $\alp_n \in \bC$.
Let us set
\begin{equation}
 \fF_0 := \{f_n^0\}_{n \in \bZ}, \qquad
 f_n^0 := \begin{cases}
  \alp_n \wt{f}_n^0, & |n| \ge N, \\
  \|f_n\| \wt{f}_n^0, & |n| < N. \\
 \end{cases}
\end{equation}
It is clear that $\fF_0$ is a system of root vectors of the operator $L_U(0)$ and $\|f_n^0\| = \|f_n\|$, $n \in \bZ$. Moreover, $\|f_n - f_n^0\| = |\alp_n| \cdot \|\wt{f}_n - \wt{f}_n^0\|$, $|n| \ge N$. Since $|\alp_n| = \|f_n\| \asymp 1$, $|n| \ge N$, and $\curl{\|\wt{f}_n - \wt{f}_n^0\|}_{n \in \bZ} \in (\ell^{p})^*$, then $\curl{\|f_n - f_n^0\|}_{n \in \bZ} \in (\ell^{p})^*$, which finishes the proof.
\end{proof}
Now we are ready to prove our main result on Bari $(\ell^p)^*$-property.
\begin{proof}[Proof of Theorem~\ref{th:crit.lp.bari}]
Recall that $Q \in \LL{p}$ for some $p \in[1,2]$. Also note that if $L_U(0)$ is selfadjoint then Theorem~\ref{th:crit.c0.bari} implies conditions~\eqref{eq:abcd.sa.intro} on the coefficients from boundary conditions~\eqref{eq:cond.canon}.

\textbf{(i)} First assume that the operator $L_U(0)$ is selfadjoint and let $\fF := \{f_n\}_{n \in \bZ}$ be some normalized system of root vector of the operators $L_U(Q)$. By Corollary~\ref{cor:every.SRV} there exists normalized system of root vectors $\fF_0 := \{f_n^0\}_{n \in \bZ}$ of the operator $L_U(0)$ such that $\curl{\|f_n - f_n^0\|}_{n \in \bZ} \in (\ell^{p})^*$. Since $L_U(0)$ is selfadjoint, then $\{f_n^0\}_{n \in \bZ}$ is an orthonormal basis in $\fH$. If $p=2$ then the proof would be already finished since $\fF$ is $\ell^2$-close to the orthonormal basis $\fF_0$. But as Remark~\ref{rem:c0.bari.diff} shows for $p \in [1,2)$, the $(\ell^p)^*$-closeness to the orthonormal basis is not equivalent to the Bari $(\ell^p)^*$-property.

To this end, let $\fG := \{g_n\}_{n \in \bZ}$ be the system of vectors in $\fH$ that is biorthogonal to the system $\fF$. We need to prove that $\curl{\|f_n - g_n\|}_{n \in \bZ} \in (\ell^{p})^*$. Clearly, $\fG$ is (not normalized) system of root vectors of the adjoint operator $L_U^*(Q)$. Since $L_U(0)$ is self-adjoint then by Lemma~\ref{lem:adjoint} we have $L_U^*(Q) = L_U(Q^*)$. Using Corollary~\ref{cor:every.SRV} in the ``opposite'' direction we can find a normalized system of root vectors $\wt{\fG} := \{\wt{g}_n\}_{n \in \bZ}$ of the operator $L_U(Q^*)$ such that $\curl{\|\wt{g}_n - f_n^0\|}_{n \in \bZ} \in (\ell^{p})^*$. Therefore, $\curl{\|f_n - \wt{g}_n\|}_{n \in \bZ} \in (\ell^{p})^*$.
Since both systems $\fG$ and $\wt{\fG}$ are root vector systems of the operator $L_U^*(Q) = L_U(Q^*)$ and eigenvalues of $L_U(Q^*)$ are asymptotically simple due to Proposition~\ref{prop:Delta.regular.basic}, then vectors $g_n$ and $\wt{g}_n$, $|n| \ge N$, are proportional for some $N \in \bN$. Since $(f_n, g_n) = 1$, $n \in \bZ$, it follows that $\wt{g}_n = (f_n, \wt{g}_n) g_n$, $|n| \ge N$.
Note that if $f, g \in \fH$ and $\|f\| = 1$, then
\begin{multline} \label{eq:f.g-1}
 |(f, g) - 1|^2 = |(f, g)|^2 + 1 - 2 \Re (f, g) \\
 \le \|f\|^2 \|g\|^2 + 1 - 2 \Re (f, g)
 = \|f\|^2 + \|g\|^2 - 2 \Re (f, g) = \|f - g\|^2.
\end{multline}
Since $\|f_n\| = 1$, $n \in \bZ$, then~\eqref{eq:f.g-1} implies that $|(f_n, \wt{g}_n) - 1| \le \|f_n - \wt{g}_n\|$, $n \in \bZ$. Hence for $|n| \ge N$ we have,
\begin{equation} \label{eq:gn-wcgn}
\|\wt{g}_n - g_n\| = \|(f_n, \wt{g}_n) g_n - g_n\| =
|(f_n, \wt{g}_n) - 1| \cdot \|g_n\| \le \|f_n - \wt{g}_n\| \cdot \|g_n\|.
\end{equation}
By the main result of
\cite{LunMal14Dokl,LunMal16JMAA,SavShk14}
the system $\fF = \{f_n\}_{n \in \bZ}$ is a Riesz basis in $\fH$. Hence so is its biorthogonal system $\fG = \{g_n\}_{n \in \bZ}$. This in particular implies that $\|g_n\| \asymp 1$, $n \in \bZ$.
Since $\curl{\|f_n - \wt{g}_n\|}_{n \in \bZ} \in (\ell^{p})^*$ and $\|g_n\| \asymp 1$, $n \in \bZ$, then inequality~\eqref{eq:gn-wcgn} implies that $\curl{\|\wt{g}_n - g_n\|}_{n \in \bZ} \in (\ell^{p})^*$, which in turn implies the desired inclusion $\curl{\|f_n - g_n\|}_{n \in \bZ} \in (\ell^{p})^*$.

\textbf{(ii)} Now assume that some normalized system of root vectors $\fF := \{f_n\}_{n \in \bZ}$ of the operator $L_U(Q)$ is a Bari $(\ell^p)^*$-sequence in $\fH$. By definition $\curl{\|f_n - g_n\|}_{n \in \bZ} \in (\ell^{p})^*$, where $\fG := \{g_n\}_{n \in \bZ}$ is a system biorthogonal to $\fF$ in $\fH$. Clearly, $\fG$ is a system of root vectors of the adjoint operator $L_U^*(Q)$.
By Corollary~\ref{cor:every.SRV} there exists normalized system of root vectors $\fF_0 := \{f_n^0\}_{n \in \bZ}$ of the operator $L_U(0)$ such that $\curl{\|f_n - f_n^0\|}_{n \in \bZ} \in (\ell^{p})^*$. Similarly there exists (possibly not normalized) system of root vectors $\fG_0 := \{g_n^0\}_{n \in \bZ}$ of the operator $L_U^*(0) = L_{U*}(0)$ such that $\curl{\|g_n - g_n^0\|}_{n \in \bZ} \in (\ell^{p})^*$. It is clear, now that $\curl{\|f_n^0 - g_n^0\|}_{n \in \bZ} \in (\ell^{p})^*$.

Let $\wt{\fG}_0 := \{\wt{g}_n^0\}_{n \in \bZ}$ be a system biorthogonal to $\fF_0$. As in part (i), $\wt{\fG}_0$ is a Riesz basis in $\fH$ and $g_n^0 = (f_n^0, g_n^0) \wt{g}_n^0$, $|n| \ge N$. Since $\|f_n^0\| = 1$, $n \in \bZ$, then~\eqref{eq:f.g-1} implies that $|(f_n^0, g_n^0) - 1| \le \|f_n^0 -g_n^0\|$, $n \in \bZ$. Hence
\begin{equation} \label{eq:gn0-wtgn0}
\|g_n^0 - \wt{g}_n^0\| = |(f_n^0, g_n^0) - 1| \cdot
\|\wt{g}_n^0\| \le \|f_n^0 - g_n^0\| \cdot \|\wt{g}_n^0\|, \qquad n \in \bZ.
\end{equation}
Since
$\wt{\fG}_0$ is a Riesz basis, then $\|\wt{g}_n^0\| \asymp 1$, $n \in \bZ$.
Thus, inequality~\eqref{eq:gn0-wtgn0} and inclusion $\curl{\|f_n^0 - g_n^0\|}_{n \in \bZ} \in (\ell^{p})^*$ imply that $\curl{\|f_n^0 - \wt{g}_n^0\|}_{n \in \bZ} \in (\ell^{p})^*$, which means that the normalized root vectors system $\fF_0 = \{f_n^0\}_{n \in \bZ}$ of the operator $L_U(0)$ is a Bari $(\ell^p)^*$-sequence and, in particular, is a Bari $c_0$-sequence. Theorem~\ref{th:crit.c0.bari} now implies that the operator $L_U(0)$ is selfadjoint and finishes the proof.
\end{proof}
\section{Application to a non-canonical string equation}
\label{sec:damped.string}
In this section we show the connection of $2 \times 2$ Dirac type operators with a non-canonical string equation with $u_{xt}$ term, and apply our results on Riesz and Bari basis property.

Consider the following non-canonical hyperbolic equation on a complex-valued function $u(x,t)$ defined for $x \in [0, \len]$ and $t \in [0, \infty)$:
\begin{equation} \label{eq:string}
 u_{tt} - (\beta_1 + \beta_2) u_{xt} + \beta_1 \beta_2 u_{xx} + a_1(x) u_x + a_2(x) u_t = 0,
\end{equation}
with the boundary conditions
\begin{equation} \label{eq:string.cond}
 u(0,t)=0, \qquad h_0 u_x(0,t) + h_1 u_x(\len,t) + h_2 u_t(\len,t)=0, \qquad t \in [0, \infty),
\end{equation}
and initial conditions
\begin{equation} \label{eq:string.init}
 u(x, 0) = u_0(x), \qquad u_t(x, 0) = u_1(x), \qquad x \in [0,\len].
\end{equation}
Here $\beta_1, \beta_2$ are constants and
\begin{equation} \label{eq:b1.b2.a1.a2.h}
 \beta_1 < 0 < \beta_2, \quad a_1, a_2 \in L^1[0,\len],
 \quad h_0, h_1, h_2 \in \bC, \quad |h_1| + |h_2| > 0.
\end{equation}

If $-\beta_1 = \beta_2 = \rho^{-1} > 0$ and $h_0 = 0$, the initial-boundary value problem~\eqref{eq:string}--\eqref{eq:string.init} governs the small vibrations of a string of length $\len$
and density $\rho$ with the presence of a damping coefficient $a_2(x)$;
the string is fixed at the left end ($x=0$), while the right end ($x=\len$) is damped with the coefficient $h_2/h_1 \in \bC \cup \{\infty\}$. Functions $u_0$ and $u_1$ represent the initial position and velocity of the string, respectively.

If $-\beta_1 \ne \beta_2$ one can use linear transform of the variables $x$ and $t$ to reduce it to a classical string equation, but with damping that depends on $t$ and non-classical initial and boundary conditions: initial condition will be on a segment non-parallel to the $x$-axis ($t=0$), while boundary conditions will be on the rays non-parallel to the $t$-axis ($x=0$).

Recall that $W^{1,p}[0, \len]$, $p \ge 1$, denotes the Sobolev space of absolutely continuous functions
with the finite norm
\begin{equation}
 \|f\|_{W^{1,p}[0,\len]}^p
 := \int_0^{\len} \bigl(|f(x)|^p + |f'(x)|^p\bigr) dx < \infty.
\end{equation}
For convenience, we introduce the following notations:
\begin{equation} \label{eq:W1p0}
\wt{W}^{1,p}[0,\len] := \{f \in W^{1,p}[0,\len] :
 f(0) = 0\}, \qquad \wt{H}^1_0[0,\len] := \wt{W}^{1,2}[0,\len],
\end{equation}
where $p \in [1, \infty]$.

The non-canonical initial-boundary value problem~\eqref{eq:string}--\eqref{eq:string.init} of a damped string can be transformed into an abstract Cauchy problem in a Hilbert space $\fH$ of the form
\begin{equation} \label{eq:string.fH.norm}
 \fH :=
 \wt{H}^1_0[0, \len] \times L^2[0, \len], \qquad
\end{equation}
with the inner product
\begin{equation} \label{eq:scal.fH}
 \scal{f,g}_{\fH} := \int_0^\len
 \(f_1'(x) \cdot \ol{g_1'(x)}
 + f_2(x) \cdot \ol{g_2(x)} \)\,dx,
\end{equation}
where $f = \col(f_1, f_2)$, $g = \col(g_1, g_2) \in \fH$.

Now the new representation of the problem~\eqref{eq:string}--\eqref{eq:string.cond} reads as follows:
\begin{equation} \label{eq:Y'=LhY}
 Y'(t) = i \cL Y(t), \quad Y(t) := \binom{u(\cdot,t)}{u_t(\cdot,t)},
 \quad t \ge 0, \qquad Y(0) = \binom{u_0}{u_1},
\end{equation}
where the linear operator $\cL : \dom(\cL) \to \fH$ is defined by
\begin{equation} \label{eq:Lh.def}
 \cL y = \cL \binom{y_1}{y_2} = -i \, \binom{y_2}{-\beta_1
 \beta_2 y_1'' + (\beta_1 + \beta_2) y_2' - a_1 y_1' - a_2 y_2},
\end{equation}
where $y = \col(y_1, y_2) \in \dom(\cL)$, with
\begin{equation} \label{eq:dom.cL}
 \dom(\cL) = \{y = \col(y_1, y_2) \in \fH:
 y_1' \in W^{1,1}[0,\len], \quad
 \cL y \in \fH, \quad
 h_0 y_1'(0) + h_1 y_1'(\len) + h_2 y_2(\len)=0\}.
\end{equation}
It is clear from the definition of $\cL$ and $\dom(\cL)$ that for $y = \col(y_1, y_2) \in \dom(\cL)$ we have: $y_1 \in \wt{W}^{1,1}_0[0,\len]$ and $y_2 \in \wt{H}^1_0[0,\len]$. In particular, $y_1(0)=y_2(0)=0$.

Spectral properties of the operator $\cL$ play important role in the study of stability of solutions of the corresponding string equation. For example, Riesz basis property of the root vectors system of $\cL$ guarantees the exponential stability of the corresponding $C_0$-semigroup. The Riesz basis property and behavior of the spectrum of the operator $\cL$ have been studied in numerous papers (see~\cite{CoxZua94,CoxZua95,Shubov96IEOT,Shubov97,BenRao00,GesHol11,GomRze15,Rzep17} and references therein).

Let us show that the operator $\cL$ is similar to a certain $2 \times 2$ Dirac type operator $L_U(Q)$. Since many spectral properties are preserved under similarity transform, known spectral properties for $2 \times 2$ Dirac type operators will translate to corresponding properties of the dynamic generator $\cL$.

To this end, we need to introduce some notations. Set
\begin{equation} \label{eq:string.B.def}
 B := \diag(b_1,b_2), \qquad b_1 := \beta_1^{-1},
 \quad b_2 := \beta_2^{-1},
\end{equation}
\begin{equation} \label{eq:string.Q.def}
 Q(x) := \frac{i}{b_2-b_1} \begin{pmatrix}
 0 & w(x) \cdot \(b_2^2 a_1(x) + b_2 a_2(x)\) \\
 \frac{-1}{w(x)} \cdot \(b_1^2 a_1(x) + b_1 a_2(x) \) & 0 \end{pmatrix}, \end{equation}
where
\begin{equation} \label{eq:wx.def}
 w(x) := w_1(x) w_2(x),
\end{equation}
\begin{equation} \label{eq:wj.def}
 w_j(x) := \exp\(\frac{b_1 b_2}{b_2-b_1} \int_0^x (b_j a_1(t) + a_2(t)) dt\),
 \qquad x \in [0,\len], \quad j \in \{1,2\}.
\end{equation}
Note, that $w_1(\cdot)$, $w_2(\cdot)$ are well defined and $Q \in L^1([0,\len], \bC^{2 \times 2})$ in view of condition~\eqref{eq:b1.b2.a1.a2.h}. Finally let
\begin{align}
\label{eq:string.U1}
 U_1(y) &:= y_1(0) + y_2(0) = 0, \\
\label{eq:string.U2}
 U_2(y) &:= b_1 h_0 y_1(0) + b_2 h_0 y_2(0) + (b_1 h_1 + h_2) w^{-1}_1(\len)y_1(1)
 + (b_2 h_1 + h_2) w_2(\len) y_2(1) = 0,
\end{align}
be boundary conditions for a Dirac operator $L_U(Q)$. Here $w_1(\cdot)$, $w_2(\cdot)$ are given by~\eqref{eq:wj.def}.
\begin{proposition} \label{prop:Lh.simil}
Operator $\cL$ is similar to the $2 \times 2$ Dirac type operator $L_U(Q)$ with the matrix $B$ given by~\eqref{eq:string.B.def}, the potential matrix $Q(\cdot)$ given by~\eqref{eq:string.Q.def} and boundary conditions $Uy=\{U_1,U_2\}y=0$ given by~\eqref{eq:string.U1}--\eqref{eq:string.U2}.
\end{proposition}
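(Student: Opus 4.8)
The plan is to exhibit a bounded operator $T \colon \fH \to \LLV{2}$ with bounded inverse such that $T(\dom(\cL)) = \dom(L_U(Q))$ and $T\cL = L_U(Q)\,T$; this is exactly what similarity means, and it transports spectrum, root vectors and the Riesz/Bari basis properties verbatim. The operator $T$ will be the composition of the classical passage to Riemann (characteristic) invariants of the hyperbolic equation~\eqref{eq:string} with a multiplicative gauge transformation designed to wipe out the damping coefficients $a_1,a_2$ from the diagonal of the resulting first-order system.

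\textbf{Reduction to a first-order system and the gauge.} First I would rewrite the evolution $Y' = i\cL Y$ as a genuine first-order system. For $Y = \col(y_1,y_2) \in \dom(\cL)$ put $p := y_1' = u_x$ and $q := y_2 = u_t$; then~\eqref{eq:string} becomes $\partial_t\col(p,q) = M\,\partial_x\col(p,q) + N\col(p,q)$ with the constant matrices $M = \begin{psmallmatrix}0 & 1\\ -\beta_1\beta_2 & \beta_1+\beta_2\end{psmallmatrix}$ and $N = \begin{psmallmatrix}0 & 0\\ -a_1 & -a_2\end{psmallmatrix}$. Since $\beta_1 \ne \beta_2$, $M$ is diagonalized by $S := \begin{psmallmatrix}1 & 1\\ \beta_1 & \beta_2\end{psmallmatrix}$, with $S^{-1}MS = \diag(\beta_1,\beta_2) = B^{-1}$ (recall $b_j = \beta_j^{-1}$). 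Passing to the characteristic variables $r := S^{-1}\col(p,q)$ turns the principal part into $B^{-1}\partial_x$ but leaves a zeroth-order term $S^{-1}NS\,r$ whose diagonal is nonzero in general. A short computation (using $\beta_j = b_j^{-1}$) shows that the gauge which removes this diagonal is \emph{asymmetric}: one sets $z_1 := w_1(x)^{-1}r_1$ and $z_2 := w_2(x)\,r_2$ with $w_1,w_2$ as in~\eqref{eq:wj.def}, and it is precisely this asymmetry of powers that later manufactures the weights $w_1w_2$ and $(w_1w_2)^{-1}$ in the potential. Finally I rescale $z$ by the constant diagonal matrix $\diag(b_1,b_2)^{-1}$, so that the left-endpoint relation below reads $z_1(0)+z_2(0)=0$, and define $T\col(y_1,y_2) := \col(z_1,z_2)$.

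\textbf{Boundedness, identification of the operator, and the boundary conditions.} That $T$ is bounded with bounded inverse is immediate: $S$ and the rescaling are invertible constant matrices, and by~\eqref{eq:b1.b2.a1.a2.h} the functions $w_1^{\pm1},w_2^{\pm1}$ lie in $L^\infty[0,1]$ and are bounded away from $0$, whence $\|T\col(y_1,y_2)\|_2^2 \asymp \|y_1'\|_2^2 + \|y_2\|_2^2 = \|\col(y_1,y_2)\|_{\fH}^2$; conversely $y_1',y_2$ are recovered pointwise from $z$ and $y_1(x)=\int_0^x y_1'(s)\,ds$ by $y_1(0)=0$, with $\|y_1\|_{\wt{H}^1_0[0,1]} \asymp \|y_1'\|_2$ via the Poincaré inequality. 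By construction $T\cL T^{-1}$ is a first-order differential operator of the form $-iB^{-1}\partial_x + Q$ acting on $z$; carrying out the substitutions of the previous paragraph shows that the diagonal of the potential cancels and that the off-diagonal entries are exactly those of~\eqref{eq:string.Q.def}, the factor $i/(b_2-b_1)$ and the coefficients $b_j^2 a_1 + b_j a_2$ arising from $S^{-1}NS$ and the rescaling; in particular $Q \in \LL{1}$ since $a_1,a_2 \in L^1$ and $w_j^{\pm1} \in L^\infty$. For the domains one translates the defining constraints of $\dom(\cL)$: the identity $u(0,t)\equiv 0$ forces $u_t(0,t) = y_2(0) = 0$, which after the gauge (with $w_j(0)=1$) and the rescaling becomes $U_1(z) = z_1(0)+z_2(0)=0$; and the condition $h_0 y_1'(0) + h_1 y_1'(1) + h_2 y_2(1) = 0$, expressed through $u_x = r_1+r_2$, $u_t = \beta_1 r_1 + \beta_2 r_2$ and the gauge, becomes $U_2(z)=0$ with the coefficients of~\eqref{eq:string.U2}. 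This yields $T(\dom(\cL)) = \dom(L_U(Q))$ and $T\cL = L_U(Q)T$.

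\textbf{Main obstacle.} The conceptual steps (characteristic variables plus gauge) are standard; the real work is the bookkeeping in the construction of $T$ and the verification of the explicit data — in particular getting the \emph{right} asymmetric gauge pair ($w_1(x)^{-1}$ on the first component, $w_2(x)$ on the second) that makes the diagonal of $Q$ vanish, and then checking that all constants, all signs, the factor $i/(b_2-b_1)$, and the endpoint weights $w_1(1)^{\pm1},w_2(1)^{\pm1}$ occurring in $U_2$ come out exactly as claimed in~\eqref{eq:string.Q.def} and~\eqref{eq:string.U2}. One should also record explicitly that $T$ is a Hilbert-space isomorphism, since that is what allows the Riesz-basis and Bari-basis conclusions for $L_U(Q)$ to be transferred to $\cL$.
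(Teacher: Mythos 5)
Your proposal is correct and follows essentially the same route as the paper's proof: the substitution $(y_1,y_2)\mapsto(y_1',y_2)$ is the paper's $\cV_0$, your diagonalization by $S$ together with the final constant rescaling $\diag(b_1,b_2)^{-1}$ is exactly the paper's $\cV_1$ with $V_1=S\,\diag(b_1,b_2)$, and your asymmetric gauge $\diag(w_1^{-1},w_2)$ is the paper's $\cV_2$ built from the solution of $-iB^{-1}V_2'+\wt{Q}_2V_2=0$. The remaining work you defer (verifying the signs, the factor $i/(b_2-b_1)$, and the placement of $w^{\pm1}$ in \eqref{eq:string.Q.def} and \eqref{eq:string.U2}) is precisely the bookkeeping the paper carries out in its Steps 1--3.
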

\begin{proof}
We will transform the operator $\cL$ into the desired operator $L_U(Q)$ via series of similarity transformations.

\textbf{Step 1.} Define
\begin{equation}
\cV_0 : \fH \to \LLV{2} \quad\text{as}\quad
\cV_0 y := \binom{y_1'}{y_2}, \quad y = \binom{y_1}{y_2} \in \fH.
\end{equation}
Since $\frac{d}{dx}$ isometrically maps $\wt{H}_0^1[0,\len] = \{f \in W^{1,2}[0,\len] : f(0)=0\}$ onto $L^2[0,\len]$,
then the operator $\cV_0$ is bounded with bounded inverse. It is easy to verify that
\begin{equation} \label{eq:L1def}
 L_1 y := \cV_0 \cL \cV_0^{-1} y
 = -i \,\binom{y_2'}{-\beta_1
 \beta_2 y_1' + (\beta_1 + \beta_2) y_2' - a_1 y_1 - a_2 y_2},
\end{equation}
where
\begin{multline} \label{eq:string.dom.wtL}
 y = \binom{y_1}{y_2} \in \dom(L_1) := \cV_0 \dom(\cL) = \{y \in W^{1,1}([0,\len]; \bC^2) : \\
 L_1y \in L^2([0,\len]; \bC^2), \quad y_2(0) = 0, \ \
 h_0 y_1(0) + h_1 y_1(\len) + h_2 y_2(\len)=0\},
\end{multline}
in view of~\eqref{eq:dom.cL} and definition of $\wt{H}_0^1[0,\len]$. Thus, the operator $\cL$ is similar to the operator $L_1$,
\begin{equation*}
 L_1y = -i B_1 y' + Q_1(x)y,
\end{equation*}
with the domain $\dom(L_1)$ given by~\eqref{eq:string.dom.wtL}, and the matrices $B_1$, $Q_1(\cdot)$, given by
\begin{equation}
 B_1 := \begin{pmatrix} 0 & 1 \\ -\beta_1 \beta_2 & \beta_1 + \beta_2 \\
 \end{pmatrix}, \qquad
 Q_1(x) := \begin{pmatrix} 0 & 0 \\ i a_1(x) & i a_2(x) \end{pmatrix}.
\end{equation}
Note, that $Q_1 \in L^1([0,\len], \bC^{2 \times 2})$ in view of condition~\eqref{eq:b1.b2.a1.a2.h}.

\textbf{Step 2.} Next we diagonalize the matrix $B_1$. To this end let
\begin{equation} \label{eq:string.V1def}
 V_1 := \begin{pmatrix} 1/\beta_1 & 1/\beta_2 \\ 1 & 1 \end{pmatrix}
 = \begin{pmatrix} b_1 & b_2 \\ 1 & 1 \end{pmatrix},
\quad \text{and so} \quad
 V_1^{-1}
 = \frac{1}{b_2-b_1} \begin{pmatrix}
 -1 & b_2 \\ 1 & -b_1 \end{pmatrix},
\end{equation}
where $b_1$ and $b_2$ are defined in~\eqref{eq:string.B.def}. We easily get after straightforward calculations that
\begin{equation} \label{eq:string.V1B1V}
 V_1^{-1} B_1 V_1 = \diag(\beta_1, \beta_2) =
 \diag(b_1^{-1}, b_2^{-1}) = B^{-1},
\end{equation}
\begin{equation} \label{eq:string.V1Q1V}
 V_1^{-1} Q_1(x) V_1 = \frac{i}{b_2-b_1} \begin{pmatrix}
 b_1 b_2 a_1(x) + b_2 a_2(x) &
 b_2^2 a_1(x) + b_2 a_2(x) \\
 - b_1^2 a_1(x) - b_1 a_2(x) &
 - b_1 b_2 a_1(x) - b_1 a_2(x)
 \end{pmatrix} =: Q_2(x), \qquad x \in [0,\len].
\end{equation}
Note, that $Q_2 \in L^1([0,\len], \bC^{2 \times 2})$ in view of condition~\eqref{eq:b1.b2.a1.a2.h}. Introducing bounded operator $\cV_1 : y \to V_1 y$ in $L^2([0,\len]; \bC^2)$, noting that it has a bounded inverse, and taking into account~\eqref{eq:string.V1B1V} and~\eqref{eq:string.V1Q1V}, we obtain
\begin{align}
\nonumber
 L_2 y & := \cV_1^{-1} L_2 \cV_1 y
 = -i V_1^{-1} B_1 V_1 y' + V_1^{-1} Q_1(x) V_1 y \\
\label{eq:L2.def}
 & = -i B^{-1} y' + Q_2(x) y,
 \qquad y \in \cV_1^{-1} \dom(L_1) =: \dom(L_2),
\end{align}
where
\begin{multline} \label{eq:string.dom.L2}
 \dom(L_2) = \{y \in W^{1,1}([0,\len]; \bC^2) : \
 L_2 y \in L^2([0,\len];\bC^2), \
 y_1(0) + y_2(0) = 0, \\
 b_1 h_0 y_1(0) + b_2 h_0 y_2(0) + (b_1 h_1 + h_2) y_1(1)
 + (b_2 h_1 + h_2) y_2(1) = 0\},
\end{multline}
with account of formula~\eqref{eq:string.dom.wtL} for the domain $\dom(L_1)$ and the formula~\eqref{eq:string.V1def} for the matrix $V_1$.

\textbf{Step 3.} On this step we make potential matrix $Q_2$ to be off-diagonal. To this end, Let $\wt{Q}_2$ be a diagonal of $Q_2$, i.e.
$$
 \wt{Q}_2(x) := \frac{i}{b_2-b_1} \begin{pmatrix}
 b_1 b_2 a_1(x) + b_2 a_2(x) & 0 \\
 0 & - b_1 b_2 a_1(x) - b_1 a_2(x)
 \end{pmatrix}.
$$
Let $V_2(\cdot)$ be a solution of the initial value problem
\begin{equation} \label{eq:V2.equ}
 -i B^{-1} V_2'(x) + \wt{Q}_2(x) V_2(x) = 0,
 \qquad V_2(0) = I_2.
\end{equation}
It is easily seen that
\begin{equation} \label{eq:V2.def}
 V_2(x) := \begin{pmatrix} w_1(x) & 0 \\
 0 & w_2^{-1}(x) \end{pmatrix}, \qquad x \in [0,\len],
\end{equation}
where $w_1(\cdot)$, $w_2(\cdot)$ are defined in~\eqref{eq:wj.def}. Let us introduce operator $\cV_2 : y \to V_2(x) y$ in $L^2([0,\len]; \bC^2)$. Since $a_1, a_2 \in L^1[0,\len]$, the operator $\cV_2$ is bounded and has a bounded inverse. Combining relation~\eqref{eq:V2.equ}, definition~\eqref{eq:string.Q.def} of $Q$ and definition~\eqref{eq:wx.def} of $w$, we get
\begin{align}
\nonumber
 L_3 y & := \cV_2^{-1} L_2 \cV_2 y \\
\nonumber
 & = -i [V_2(x)]^{-1} B^{-1} V_2(x) y'
 + [V_2(x)]^{-1} (-i B^{-1} V_2'(x) + Q_2(x) V_2(x)) y \\
\nonumber
 & = -i B^{-1} y'
 + [V_2(x)]^{-1} (Q_2(x) - \wt{Q}_2(x)) V_2(x)) y \\
\label{eq:L3.def}
 & = -i B^{-1} y' + Q(x) y,
 \qquad y \in \cV_2^{-1} \dom(L_2) =: \dom(L_3).
\end{align}
It is clear from the definition of $\cV_2$ that $\dom(L_3)$ coincides with $\dom(L_U(Q))$ defined via~\eqref{eq:string.U1}--\eqref{eq:string.U2}. Hence $L_3 = L_U(Q)$. Combining all the steps of the proof one concludes that $\cL$ is similar to $L_U(Q)$.
\end{proof}
Combining Proposition~\ref{prop:Lh.simil} with our previous results for $2 \times 2$ Dirac type operators we obtain the Riesz basis property and analogous of Bari basis property for the dynamic generator $\cL$ of the non-canonical initial-boundary value problem~\eqref{eq:string}--\eqref{eq:string.init} for a damped string equation. The part (i) of the following result improves known results in the literature on the Riesz basis property for the operator $\cL$ in the case $-\beta_1=\beta_2$, $a_1 \equiv 0$, $h_0=0$ (see~\cite{CoxZua94,CoxZua95,Shubov96IEOT,Shubov97,BenRao00,GesHol11,GomRze15,Rzep17} and references therein). The part (ii) shows the application of one of our main results Theorem~\ref{th:crit.bari}.
\begin{theorem} \label{th:string.riesz}
\textbf{(i)} Let parameters of the damped string equation satisfy relaxed conditions~\eqref{eq:b1.b2.a1.a2.h},
\begin{equation} \label{eq:h2neh1}
\beta_2 h_2 + h_1 \ne 0, \qquad \beta_2 h_2 + h_1 \ne 0,
\end{equation}
and in addition boundary conditions~\eqref{eq:string.U1}--\eqref{eq:string.U2} are strictly regular. Then the system of root vectors of the operator $\cL$ \ \textbf{forms a Riesz basis} in $\fH = \wt{H}^1_0[0,\len] \times L^2[0,\len]$.

\textbf{(ii)} Let in addition $a_1, a_2 \in L^2[0,\len]$. Let also $\cV_0$, $\cV_1$, $\cV_2$ be the operators defined in the steps of the proof of Proposition~\ref{prop:Lh.simil}. Then the system of root vectors of the operator $\cL$ is quadratically close in $\fH$ to a system of the form $\{\cV_0^{-1} \cV_1 \cV_2 e_n\}_{n \in \bZ}$, where $\{e_n\}_{n \in \bZ}$ is an orthonormal basis in $\LL{2}$, if and only if boundary conditions~\eqref{eq:string.U1}--\eqref{eq:string.U2} are self-adjoint, which is equivalent to the condition
\begin{equation} \label{eq:h0=0.b1=-b2}
 h_0 = 0, \quad \beta_1 = -\beta_2, \quad \int_0^{\len} \Im a_2(t) dt = \beta_2 \log \abs{\frac{\beta_2 h_2 + h_1}{\beta_2 h_2 - h_1}}.
\end{equation}
\end{theorem}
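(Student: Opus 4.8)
The plan is to transport everything through the similarity $\cL = W L_U(Q) W^{-1}$ supplied by Proposition~\ref{prop:Lh.simil}, where $W := \cV_0^{-1}\cV_1\cV_2$ maps $\LLV{2}$ onto $\fH$ and, since each of $\cV_0,\cV_1,\cV_2$ does, is bounded with bounded inverse. Under $W$ a system of root vectors $\{f_n\}_{n\in\bZ}$ of $L_U(Q)$ at eigenvalues $\{\l_n\}_{n\in\bZ}$ is carried bijectively onto the system $\{W f_n\}_{n\in\bZ}$ of root vectors of $\cL$ at the same eigenvalues, and conversely. As a preliminary remark, for the boundary conditions~\eqref{eq:string.U1}--\eqref{eq:string.U2} one computes (with $b_j := \beta_j^{-1}$) that $J_{14} = (b_2 h_1 + h_2)\, w_2(\len)$ and $J_{32} = -(b_1 h_1 + h_2)\, w_1^{-1}(\len)$; since $w_j(\len)\ne 0$, conditions~\eqref{eq:h2neh1} are exactly $J_{14}J_{32}\ne 0$, i.e.\ regularity (Definition~\ref{def:regular}), and are in any case implied by the standing strict regularity hypothesis.

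For part (i): since $a_1,a_2\in L^1[0,\len]$ and $w^{\pm1}$ are continuous, the potential $Q$ of~\eqref{eq:string.Q.def} lies in $\LL{1}$; as~\eqref{eq:string.U1}--\eqref{eq:string.U2} are strictly regular, the Riesz basis theorem for $2\times 2$ Dirac-type operators with $L^1$-potential and strictly regular boundary conditions (\cite{SavShk14,LunMal14Dokl,LunMal16JMAA}) gives that a system of root vectors $\{f_n\}_{n\in\bZ}$ of $L_U(Q)$ is a Riesz basis of $\LLV{2}$: $f_n = T e_n$ with $\{e_n\}_{n\in\bZ}$ orthonormal and $T,T^{-1}$ bounded. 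Then $\{W f_n\}_{n\in\bZ} = \{(WT)e_n\}_{n\in\bZ}$, with $WT$ and $(WT)^{-1}$ bounded, is a Riesz basis of $\fH$ consisting of root vectors of $\cL$, which is assertion (i).

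For part (ii): now $a_1,a_2\in L^2[0,\len]$, so $Q\in\LL{2}$. The first reduction is that a system $\{F_n\}_{n\in\bZ}$ of root vectors of $\cL$ is quadratically close in $\fH$ to $\{W e_n\}_{n\in\bZ}$ for some orthonormal basis $\{e_n\}_{n\in\bZ}$ of $\LLV{2}$ if and only if some normalized system of root vectors of $L_U(Q)$ is a Bari basis of $\LLV{2}$. Indeed, writing $F_n = W f_n$ with $\{f_n\}$ root vectors of $L_U(Q)$ and using that $W,W^{-1}$ are bounded, $\sum_n\|F_n - W e_n\|_{\fH}^2 < \infty$ is equivalent to $\sum_n\|f_n - e_n\|_2^2 < \infty$; the latter forces $\|f_n\|_2\to 1$, and since $\bigabs{1 - \|f_n\|_2}\le\|f_n - e_n\|_2$ it is in turn equivalent to $\sum_n\bignorm{\|f_n\|_2^{-1} f_n - e_n}_2^2 < \infty$, i.e.\ to the normalized system $\{\|f_n\|_2^{-1} f_n\}_{n\in\bZ}$ being a Bari basis. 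By Theorem~\ref{th:crit.bari} (the boundary conditions being strictly regular) the latter holds if and only if $L_U(0)$ is self-adjoint, equivalently~\eqref{eq:abcd.sa.intro} holds for the coefficients $a,b,c,d$ of the canonical form~\eqref{eq:cond.canon.intro} of~\eqref{eq:string.U1}--\eqref{eq:string.U2}.

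It remains to unwind~\eqref{eq:abcd.sa.intro} into~\eqref{eq:h0=0.b1=-b2}. Multiplying $\binom{U_1}{U_2}$ by $A_{14}^{-1}$ puts~\eqref{eq:string.U1}--\eqref{eq:string.U2} into canonical form with
$$
 a = 0, \qquad b = 1, \qquad
 d = \frac{(b_2 - b_1)\, h_0}{(b_2 h_1 + h_2)\, w_2(\len)}, \qquad
 c = \frac{b_1 h_1 + h_2}{(b_2 h_1 + h_2)\, w(\len)}, \qquad b_j = \beta_j^{-1}.
$$
Inserting $a=0$, $b=1$ into~\eqref{eq:abcd.sa.intro}: the first relation forces $\beta := -b_2/b_1 = -\beta_1/\beta_2 = 1$, i.e.\ $\beta_1 = -\beta_2$; the third forces $d = 0$, hence (as $b_1<0<b_2$) $h_0 = 0$; the second then collapses to $|c| = 1$. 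With $\beta_1 = -\beta_2$ (so $b_1 = -b_2$) and $h_0 = 0$, a short computation from~\eqref{eq:wj.def} gives $w(\len) = \exp\!\big(-\beta_2^{-1}\!\int_0^{\len} a_2(t)\,dt\big)$ (the $a_1$-contribution cancels because $\beta_1 + \beta_2 = 0$) and $c = (\beta_2 h_2 - h_1)\big/\big((\beta_2 h_2 + h_1)\, w(\len)\big)$, so that taking absolute values turns $|c| = 1$ into precisely the last identity of~\eqref{eq:h0=0.b1=-b2}. Since the substantive analytic input — the Riesz basis theorems, Theorem~\ref{th:crit.bari}, and Proposition~\ref{prop:Lh.simil} — is already available, there is no genuine obstacle; the only delicate point is the first reduction in part (ii), namely that the non-unitary $W$ still preserves quadratic closeness between root systems in $\fH$ and in $\LLV{2}$, together with the passage from an unnormalized system quadratically close to an orthonormal basis to the associated normalized Bari basis.
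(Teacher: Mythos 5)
Your proposal is correct and follows essentially the same route as the paper's own proof: transfer everything through the bounded-with-bounded-inverse similarity $\cV_0^{-1}\cV_1\cV_2$, invoke the Riesz basis theorem for $L_U(Q)$ with $Q\in\LL{1}$ for part (i), and for part (ii) reduce to Theorem~\ref{th:crit.bari} and then specialize~\eqref{eq:abcd.sa.intro} with $a=0$, $b=1$ to $d=0$, $b_1=-b_2$, $|c|=1$, exactly as in the paper (you are in fact slightly more careful than the paper on the passage from quadratic closeness of the unnormalized system to the Bari property of the normalized one). One small remark: carrying out your own final computation literally, $|c|=1$ with $w(\len)=\exp(-\beta_2^{-1}\int_0^{\len}a_2\,dt)$ yields $\int_0^{\len}\Re a_2(t)\,dt=\beta_2\log\bigl|(\beta_2h_2+h_1)/(\beta_2h_2-h_1)\bigr|$, i.e.\ the real rather than the imaginary part of $a_2$, so the claim that it matches~\eqref{eq:h0=0.b1=-b2} ``precisely'' inherits what appears to be a typo in the paper's stated formula rather than an error in your argument.
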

\begin{proof}
First, let us transform boundary conditions~\eqref{eq:string.U1}--\eqref{eq:string.U2} to a canonical form~\eqref{eq:cond.canon.intro} assuming condition~\eqref{eq:h2neh1}. For this we multiply the first condition $U_1$ by $b_1 h_0$ and subtract from $U_2$ and then multiple the second condition $U_2$ by $(b_2 h_1 + h_2)^{-1} w_2^{-1}(1)$. Boundary conditions~\eqref{eq:string.U1}--\eqref{eq:string.U2} will take the form
\begin{equation} \label{eq:cond.canon.string}
\begin{cases}
 \wh{U}_{1}(y) = y_1(0) + y_2(0) = 0, \\
 \wh{U}_{2}(y) = d y_2(0) + c y_1(1) + y_2(1) = 0,
\end{cases}
\end{equation}
where
\begin{equation}
d = \frac{(b_2-b_1) h_0}{(b_2 h_1 + h_2) w_2 (1)}, \qquad
c = \frac{b_1 h_1 + h_2}{(b_2 h_1 + h_2) w(\len)}.
\end{equation}
Here $w, w_1, w_2$ are given by~\eqref{eq:wx.def}--\eqref{eq:wj.def}. In particular
\begin{equation} \label{eq:wlen}
 w(\len) := \exp\(\frac{b_1 b_2}{b_2-b_1} \int_0^{\len} ((b_1 + b_2) a_1(t) + 2a_2(t)) dt\).
\end{equation}

\textbf{(i)} Proposition~\ref{prop:Lh.simil} implies that the operator $\cL$ is similar to the operator $L_U(Q)$ with the matrix $B$ given by~\eqref{eq:string.B.def}, the potential matrix $Q(\cdot)$ given by~\eqref{eq:string.Q.def} and boundary conditions $Uy=\{U_1,U_2\}y=0$ given by~\eqref{eq:string.U1}--\eqref{eq:string.U2}. Note that condition~\eqref{eq:h2neh1} implies regularity of boundary conditions~\eqref{eq:string.U1}--\eqref{eq:string.U2}. In addition they are strictly regular by the assumption. Hence operator $L_U(Q)$ has compact resolvent and by Proposition~\ref{prop:Delta.regular.basic} its eigenvalues are asymptotically simple and separated. Moreover, Theorem 1.1 from~\cite{LunMal16JMAA} implies that the system of root vectors of the operator $L_U(Q)$ forms a Riesz basis in $\LLV{2}$. Similarity of $\cL$ and $L_U(Q)$ implies the same properties for $\cL$ in the space $\fH$, which finishes the proof of part (i).

\textbf{(ii)} Since $a_1, a_2 \in L^2[0,\len]$ it follows that $Q \in \LL{2}$. Since boundary conditions~\eqref{eq:cond.canon.string} are strictly regular then by Theorem~\ref{th:crit.bari} (any and every) system of root vectors of the operator $L_U(Q)$ forms a Bari basis in $\LLV{2}$ if only if boundary conditions~\eqref{eq:cond.canon.string} are self-adjoint, which in turn is equivalent to conditions~\eqref{eq:abcd.sa.intro}. Since $a=0$ and $b=1$ then~\eqref{eq:abcd.sa.intro} is equivalent to
\begin{equation}
d=0, \quad b_1 = -b_2, \quad |c| = 1.
\end{equation}
Since $\beta_1 = b_1^{-1}$ and $\beta_2 = b_2^{-1}$, this in turn is equivalent to~\eqref{eq:h0=0.b1=-b2}.

Let us set $\cV := \cV_0^{-1} \cV_1 \cV_2$ and let $\{f_n\}_{n \in \bZ}$ be some system of root vectors of the operator $L_U(Q)$. It follows from the proof of Proposition~\ref{prop:Lh.simil} that $\{\cV f_n\}_{n \in \bZ}$ is a system of root vectors of the operator $\cL$. Hence $\{f_n\}_{n \in \bZ}$ is quadratically close to an orthonormal basis $\{e_n\}_{n \in \bZ}$ in $\LLV{2}$ if and only if $\{\cV f_n\}_{n \in \bZ}$ is quadratically close to $\{\cV e_n\}_{n \in \bZ}$ in $\fH$. This completes the proof.
\end{proof}

\end{document}